\documentclass{article}

\usepackage{amsfonts,amsmath, amssymb,latexsym}
\usepackage{multirow}
\setlength{\textheight}{8.75in}
\setlength{\textwidth}{6.5in}
\setlength{\topmargin}{0.0in}
\setlength{\headheight}{0.0in}
\setlength{\headsep}{0.0in}
\setlength{\leftmargin}{0.0in}
\setlength{\oddsidemargin}{0.0in}
\setlength{\parindent}{3pc}
\def\Z{{\mathbb Z}}
\def\A{{\mathbb A}}

\def\SL{{\rm SL}}

\def\smax{{\rm smax}}
\def\GL{{\rm GL}}

\def\ram{{\rm ram}}
\def\Stab{{\rm Stab}}

\def\Sym{{\rm Sym}}
\def\Gal{{\rm Gal}}
\def\Cl{{\rm Cl}}

\newcommand{\cO}{{\mathcal O}}

\def\CS{{\mathcal S}}
\def\P{{\mathbb P}}

\def\Disc{{\rm Disc}}

\def\Aut{{\rm Aut}}

\def\irr{{\rm irr}}

\def\Frob{{\rm Frob}}

\def\Vol{{\rm Vol}}
\def\R{{\mathbb R}}
\def\F{{\mathbb F}}

\def\T{{\mathcal T}}

\newcommand{\cL}{\mathcal{L}}

\def\Ind{{\rm Ind}}

\def\FF{{\mathfrak F}}

\def\Q{{\mathbb Q}}

\def\U{{\mathcal U}}

\def\Z{{\mathbb Z}}
\def\G{{\mathbb G}}
\def\P{{\mathbb P}}
\def\F{{\mathbb F}}
\def\Q{{\mathbb Q}}
\def\C{{\mathbb C}}

\def\Sp{{\rm Sp}}

\def\fz1{{F_{\Z,1}}}

\def\max{{\rm max}}
\def\unr{{\rm unr}}
\def\nmax{{\rm nmax}}
\DeclareMathOperator{\nic}{\operatorname{n-ic}}
\def\univ{{\rm univ}}
\def\TT{\mathbb{T}}

\newcommand{\et}{\mathrm{et}}

\newtheorem{theorem}{Theorem}[section]
\newtheorem{corollary}[theorem]{Corollary}

\newtheorem{lemma}[theorem]{Lemma}
\newtheorem{example}[theorem]{Example}
\newtheorem{remark}[theorem]{Remark}
\newtheorem{proposition}[theorem]{Proposition}

\newenvironment{proof}{\noindent {\bf Proof:}}{$\Box$ \vspace{2 ex}}

\usepackage{xcolor,color,url,lmodern}

\title{Sato-Tate equidistribution of certain families of Artin $L$-functions}

\author{Arul Shankar, Anders S{\"o}dergren and Nicolas Templier}

\begin{document}

\maketitle

\begin{abstract}
We study various families of Artin $L$-functions attached to geometric
parametrizations of number fields. In each case we find the Sato-Tate
measure of the family and determine the symmetry type of the
distribution of the low-lying zeros.
\end{abstract}

\section{Introduction}

The Katz-Sarnak heuristics~\cite{KatzSarnak2} concern the arithmetic statistics
of a family $\FF$ of $L$-functions. In this paper,
we verify the heuristics for certain families arising from number
fields.  We shall follow the framework of the recent~\cite{SST}.  We
recall that in~\cite{SST} the authors distinguish two ways of forming
a family: harmonic families, which can be studied with the trace
formula; and geometric families arising from algebraic varieties
defined over the rationals.  In this paper we are concerned with the
geometric families of zero dimensional varieties, which give rise to
number fields.

The first family we study comes from the space $V$ of
monic polynomials of degree $n$.
To any $f\in V$ we
associate its scheme $X_f$ of zeros. This defines an affine subset
$X\subset V \times \A^1$. If $f\in V(\Z)$, then the ring $R_f:=\Z[T]/f(T)$ of
regular functions on $X_f$ is ${\it monogenic}$, which means that it
is generated by a single element called
a {\it monogenizer} of $R_f$.
The additive group $\G_a$ naturally acts on $V$ and on $\A^1$ via translations $(m\cdot f)(T):=f(T+m)$ and the covering $X\to V$ is $\G_a$-equivariant.

The ramification locus of the $n$-covering $X\to V$ is given by the
equation $\Delta=0$, where the discriminant $\Delta$ is a $\G_a$-invariant polynomial function on $V$.  The covering is \'etale away from the
ramification locus, thus in particular the ring $R_f$ is reduced if
and only if $\Delta(f)\neq 0$. The Galois group of the covering is the full permutation group $S_n$, which can be proved by identifying $V$ with the GIT quotient $\A^n // S_n$ and similarly $X\simeq \A^n // S_{n-1}$ with the natural projections $X\to V$ and $X\to \A^1$, see~\S\ref{s:n-ary}.

If $f\in V(\Z)^{\irr}$ is irreducible with nonzero discriminant, then the field of fractions $K_f$ of $R_f$ is a
number field of degree $n$. Let $M_f$ denote the normal closure of $K_f$.
The Galois group $\Gal(M_f/\Q)$ embeds irreducibly into $S_n$.  By composing
with the standard representation of $S_n$, we obtain an Artin
representation
\[
\rho_{K_f}:\Gal(M_f/ \Q)\hookrightarrow S_n \to\GL_{n-1}(\C).
\]
We are interested in the $L$-functions
$L(s,\rho_{K_f})$. Note that $\zeta(s)L(s,\rho_{K_f})$ is equal to the Dedekind zeta function $\zeta_{K_f}(s)$.
In a precise sense to be explained in Section~\ref{s:n-ary} below, for a 100\% of elements $f\in V(\Z)$, the polynomial $f$ is irreducible with nonzero discriminant and the normal closure $M_f$ has Galois group $S_n$. This can be seen to follow from an application of Hilbert irreducibility.

We consider the  subset $V(\Z)^\max$ of $V(\Z)^{\irr}$ consisting of irreducible polynomials $f$ with nonzero discriminant such that $R_f$ is a maximal order in $K_f$.
Imposing the condition of maximality requires the application of a sieve and a tail estimate developed and proved in \cite{BSW}.
The action of $\G_a(\Z)=\Z$ by translation preserves the subsets $V(\Z)^{\irr}$ and $V(\Z)^{\max}$ of $V(\Z)$. 
Let the family $\FF$ consist of the $\Z$-orbits on $V(\Z)^{\max}$. 
For a $100\%$ of $f\in V(\Z)^\max$, the representation $\rho_{K_f}$ has image $S_n$, hence $L(s,\rho_{K_f})$
is cuspidal and orthogonal self-dual.


The family $\FF$ parametrizes monogenized number fields of
degree $n$ over $\Q$ up to isomorphism.  If $R=\Z[\alpha]$ is a monogenic ring, then the pair $(R,\alpha)$ is called a {\it monogenized ring}.  A pair
$(K,\alpha)$ where $K$ is a number field is said to be a {\it
  monogenized field} if $\alpha$ belongs to $\cO_K$, the ring of
integers of $K$, and the pair $(\cO_K,\alpha)$ is a monogenized
ring. Two monogenized fields $(K,\alpha)$ and $(K',\alpha')$ are said
to be {\it isomorphic} if $K$ is isomorphic to $K'$ and this
isomorphism carries $\alpha$ to $\alpha'+m$ for some integer $m\in\Z$.
If a monic polynomial $f$ is irreducible, then the field of
fractions of $R_f$ is a degree-$n$ field $K_f=\Q[T]/f(T)$, and the pair
$(K_f,\alpha)$ is a monogenized field, where $\alpha$ is the image of
$T$ in $R_f$.  Conversely, if $(K,\alpha)$ is a monogenized field,
then the characteristic polynomial of $\alpha$ is an element $f$
belonging to $V(\Z)^{\max}$, and the field of fractions of $R_f$ is
$K$.

It is possible for number fields to have more than one monogenizer. However, a result of Birch and Merriman \cite{BM} implies that a number field has only finitely many monogenizers, up to translation by a rational integer. Therefore, a number field $K$ arises only finitely many times in the family $\FF$. 

Since $V\simeq \A^n // S_n$ it is natural to consider the associated grading. More precisely, an element $(x_1,\ldots,x_n)\in\A^n//S_n$ gives rise to the polynomial $f(T)=\prod_i(T-x_i)$. Considering the $x_i$ to be elements of degree 1, if follows that for $f(T)=T^n+a_1T^{n-1}+\cdots +a_n\in V$, the coefficient $a_i$ has degree $i$ because it is $(-1)^i$ times the $i$-th symmetric polynomial evaluated at the roots of $f$. The discriminant $\Delta$ is then homogeneous of degree $n(n-1)$. We order the family by the height $h(f)=\max_i \{|a_i|^{n(n-1)/i}\}$ on $V(\R)$ which is also homogeneous of degree $n(n-1)$. We then prove the following theorem (see Sections \ref{s:setup} and \ref{s:n-ary}):

\begin{theorem}\label{th:nic}
The family parametrizing monogenized degree-$n$ number fields ordered
by height has Sato-Tate group $S_n \subset \GL_{n-1}(\C)$, and thus
Symplectic symmetry type.
\end{theorem}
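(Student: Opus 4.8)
The plan is to prove the two assertions of Theorem~\ref{th:nic} in turn: first to identify the Sato-Tate measure of the family $\FF$ as the image of Haar measure under the embedding $S_n\hookrightarrow\GL_{n-1}(\C)$ afforded by the standard representation, and then to read off the Katz--Sarnak symmetry type from this datum using the dictionary of \cite{SST}.

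For the first assertion I would start from the unramified local description: if $f\in V(\Z)^{\max}$, $p\nmid\Delta(f)$, and $\Gal(M_f/\Q)=S_n$, then $p$ is unramified in $K_f$, the conjugacy class of $\Frob_p$ in $S_n$ is the cycle type of the factorization of $f\bmod p$ over $\F_p$, and the Satake parameter of $\rho_{K_f}$ at $p$ is the image of this class under $S_n\to\GL_{n-1}(\C)$. Hence the Sato-Tate statement is equivalent to the claim that, as $f$ runs over $V(\Z)^{\max}$ ordered by height with $X\to\infty$ and then $p\to\infty$, the factorization types of $f$ modulo $p$ (and, jointly, modulo any fixed finite set of primes) equidistribute with weights $|c|/n!$, one for each conjugacy class $c$ of $S_n$. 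To establish this I would: (i) parametrize $V(\Z)$ by the coefficient vector $(a_1,\dots,a_n)$ and note that $h(f)\le X$ is equivalent to $|a_i|\le X^{i/(n(n-1))}$ for all $i$, which defines a box in $\Z^n$ all of whose side lengths tend to infinity with $X$; consequently, for any modulus $M$, the reductions $(a_i\bmod M)$ equidistribute and a prescribed residue is attained with density $M^{-n}+o(1)$; (ii) impose the maximality condition by the sieve and uniform tail estimate of \cite{BSW}, which exhibits the density of $V(\Z)^{\max}$ inside $V(\Z)$ as a convergent Euler product of local densities and shows this sieve is compatible with finitely many congruence conditions at auxiliary primes --- using in particular that maximality at $p$ is automatic as soon as $p\nmid\Delta(f)$, so that prescribing an unramified Frobenius class at $p$ already forces the local maximality there; (iii) combine (i) and (ii) with the prime polynomial theorem over $\F_p$, which says that a monic degree-$n$ polynomial over $\F_p$ has factorization type $c$ with proportion $|c|/n!+O_n(p^{-1/2})$, and pass to the limit $p\to\infty$. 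Together with the fact, recalled in \S\ref{s:n-ary}, that the cover $X\to V$ has Galois group $S_n$ --- so that the geometric and arithmetic monodromy groups coincide and $S_n$ is the correct ambient group --- this identifies the Sato-Tate group of $\FF$ with $S_n\subset\GL_{n-1}(\C)$ carrying the image of its Haar measure.

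For the second assertion I would invoke the recipe of \cite{SST} expressing the symmetry type through the sign of
\[
\frac{1}{n!}\sum_{g\in S_n}\operatorname{Tr}(g^2)\;=\;\dim\big(\Sym^2 V\big)^{S_n}-\dim\big(\wedge^2 V\big)^{S_n},
\]
where $V$ is the standard $(n-1)$-dimensional representation and $\operatorname{Tr}(g^2)$ is the trace of $g^2$ on $V$ (equivalently, the number of points fixed by $g^2$ minus one). Since $V$ is irreducible and defined over $\R$ --- realized on $\{x\in\R^n:\sum_i x_i=0\}$ with $S_n$ permuting the coordinates --- its Frobenius--Schur indicator equals $+1$, so $\Sym^2 V$ contains the trivial representation exactly once and $\wedge^2 V$ does not; the displayed quantity equals $+1>0$, which in the \cite{SST} dictionary is precisely the symplectic symmetry type. (For $n=2$ this specializes to the classical fact that the family of quadratic fields, equivalently of quadratic Dirichlet $L$-functions, is symplectic.)

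I expect the main obstacle to lie in step (ii): the sieve for maximality requires a tail bound showing that, with respect to this weighted height, the density of $f\in V(\Z)$ failing to be maximal at a prime $p$ decays rapidly enough in $p$ for the Euler product to converge and for the sieve to be insensitive to the finitely many Frobenius conditions imposed at small primes. This is exactly what is imported from \cite{BSW}; granting it, the box equidistribution, the identification of Satake parameters with factorization types, and the limit $p\to\infty$ in the prime polynomial theorem are routine, and the symmetry-type deduction is a short representation-theoretic computation.
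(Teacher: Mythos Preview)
Your approach is essentially the paper's. The first assertion is proved in Section~\ref{s:n-ary} exactly along your lines: box counting in the coefficients (your step (i)), the maximality sieve and tail estimate from \cite{BSW} (your step (ii)), and an elementary local count over $\F_p$ (Lemma~\ref{l:langweil}, your step (iii); the error there is $O(1/p)$, slightly better than the $O(p^{-1/2})$ you quote). These are packaged as the quantitative counting Theorem~\ref{thmonnmt}, verifying~\eqref{e:thmainsncount} and~\eqref{cptau} with explicit exponents.

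The second assertion needs more than your proposal makes explicit. In this paper ``Symplectic symmetry type'' is not merely the prediction of the \cite{SST} dictionary from $i_3(\FF)=+1$; it is the one-level density statement of Theorem~\ref{th:low-lying}, proved via the explicit formula. One decomposes the prime-power sum into $\CS_1,\CS_2,\CS_3,\CS_{\ram}$ as in~\eqref{eqs}, and it is the \emph{quantitative} equidistribution --- power-saving error terms with controlled dependence on $p$, as in~\eqref{e:thmainsncount} --- that kills $\CS_1,\CS_3,\CS_{\ram}$ within a nontrivial range of support; the Frobenius--Schur computation you carry out then enters through $\CS_2$ to produce $-f(0)/2$. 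Your qualitative limit ``$X\to\infty$ then $p\to\infty$'' suffices to identify the Sato--Tate group but not, by itself, to prove the one-level density; you must retain the uniform power saving that \cite{BSW} supplies and feed it through the explicit formula. You also omit the root-number and rank inputs that the paper flags (both trivially dispatched here: root number identically $+1$ since $\rho_K$ is orthogonal, and rank zero as in~\S\ref{sub:averageFrob}), but these should be mentioned since in other families they can alter the answer.
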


The first assertion of Theorem~\ref{th:nic} is the Sato-Tate
equidistribution for families in the sense of~\cite[Conj.~1]{SST}. If
we let $\T_n$ be the set of conjugacy classes in $S_n$, then this
means that, as $x,y\to \infty$ with $\frac{\log x}{\log y}$ large
enough, the elements
\begin{equation}\label{i:ST}
\{\rho_K(\Frob_p):\ K\in \FF(x),\ p<y\} \subset \T_n
\end{equation}
become equidistributed for the Sato-Tate measure on $\T_n$ which is
the pushforward of the normalized counting measure on $S_n$. 
This is to be compared with the Chebotarev equidistribution theorem
that says that for any $S_n$-number field $K$, the elements
$\{\rho_K(\Frob_p):\ p<y\}$ are equidistributed in $\T_n$ as $y\to
\infty$. Here the extra averaging over $K\in \FF(x)$ allows us to
produce a quantitative power saving error term.

In general, for any given family the Sato-Tate equidistribution~\eqref{i:ST} has applications to sieving, zero density results, averaging of $L$-values, and low-lying zeros.
In this paper we confine ourselves to the latter aspect.
The second assertion of Theorem~\ref{th:nic} on the Symplectic
symmetry type corresponds to the one-level density with restricted
support of low-lying zeros. As
explained in~\cite[Conj.~2]{SST} the proof shall proceed from the Sato-Tate
equidistribution of~\eqref{i:ST} and from considering the following
two additional quantities. First, the \emph{root numbers} of
$L(s,\rho_{K})$ are always $+1$ because the root numbers of both
$\zeta(s)$ and $\zeta_K(s)$ are $+1$. This also follows from $\rho_K$
being an orthogonal representation as a special case of a result of
Fr\"ohlich-Queyrut \cite{FQ}. Second, the \emph{rank of the family} is zero, see
\S\ref{sub:averageFrob}.


 The proof of Theorem \ref{th:nic} and of the equidistribution of
\eqref{i:ST} proceeds as follows: first, we determine asymptotics for
the number of $\Z$-orbits on $V(\Z)^\max$ having bounded height, and
whose coefficients satisfy any finite set of congruence conditions. It
is here that we need the sieve methods of \cite{BSW}. Next, note that
$\rho_{K_f}(\Frob_p)$ is determined by $R_f\otimes\F_p$, which is the
ring over $\F_p$ corresponding to the reduction of $f$ modulo $p$. We
then determine the density of elements in $V(\Z)^\max$, such that the
corresponding value of $\rho_{K_f}(\Frob_p)$ is fixed, via a local
count of configurations of $n$ points in $\F_p$.

Next, it is desirable to have families that count each number field at
most once. This is achieved in the cubic case by further considering
orbits under the $\GL_2$ action. We refine the construction by forming
the affine space $V\simeq \A^4$ of binary cubic forms, and construct a
$3$-covering $X\to V$ as above, except that now $X\subset V\times
\P^1$ is quasi-projective.  We consider the action by $\GL_2$ on $V$
and on $\P^1$ which induces an equivariant structure of the covering
$X\to V$, i.e., the action of $\GL_2$ on $V\times\P^1$ preserves $X$
and thus the map $X\to V$ is compatible with the actions of $\GL_2$ on
$X$ and $V$. In fact $V$ is a prehomogeneous vector space for this
action and the discriminant $\Delta$ is a generator of the algebra of
invariant polynomials. We then consider elements $f$ in
$\GL_2(\Z)\backslash V(\Z)^{\smax}$ as parameters for maximal
$S_3$-orders. Since two maximal cubic forms give rise to the same
cubic field $K_f$ if and only if they belong to the same
$\GL_2(\Z)$-orbit, we obtain a family $\FF$ which parametrizes the
$S_3$-fields exactly once. We shall order the family by discriminant
so that $\FF(x)$ coincides with the set of $S_3$-fields with absolute
discriminant less than $x$. It is a result of Davenport--Heilbronn
that $|\FF(x)| \sim x/(3\zeta(3))$ as $x\to
\infty$. Bhargava~\cite{B1,B2} proved the analogous result for quartic
and quintic fields. The following is due to A.\ Yang~\cite{Yang} in
the cubic and quartic cases.

\begin{theorem}\label{thmainsn}
The families parametrizing $S_3$-, $S_4$- and $S_5$-fields ordered by discriminant are homogeneous orthogonal, and thus have Symplectic symmetry type.
\end{theorem}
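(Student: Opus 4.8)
The goal of Theorem~\ref{thmainsn} is to establish, for the families of $S_n$-fields ($n=3,4,5$) ordered by discriminant, two things: (a) the Sato-Tate equidistribution in the sense of~\eqref{i:ST}, with Sato-Tate measure the pushforward of the Haar (normalized counting) measure on $S_n$; and (b) that the symmetry type of the low-lying zeros is Symplectic, via the one-level density with restricted support. Since the excerpt has already set up the geometric parametrizations — binary cubic forms for $S_3$ (a prehomogeneous vector space under $\GL_2$), and the analogous $\GL_2\times\GL_3$- and $\GL_4\times\GL_5$-representations of Bhargava for $S_4$ and $S_5$ — and since the counting asymptotics $|\FF(x)|\sim c_n x$ are due to Davenport--Heilbronn and Bhargava, the architecture of the proof is parallel to that of Theorem~\ref{th:nic}. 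The key term "homogeneous orthogonal" signals that, just as in~\cite{SST}, the symmetry type is forced by the combination of (i) the Sato-Tate group being $S_n$ (so that each $L(s,\rho_K)$ is orthogonally self-dual), (ii) all root numbers being $+1$, and (iii) the rank of the family being zero.

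**The plan.** First, I would invoke the geometric-sieve machinery: for each of $n=3,4,5$, parametrize maximal $S_n$-orders by $\GL_2(\Z)$- (resp.\ $\GL_2(\Z)\times\GL_3(\Z)$-, $\GL_4(\Z)\times\GL_5(\Z)$-) orbits on the integral points of the relevant prehomogeneous vector space $V$, restrict to the "$\smax$" locus of maximal orders, and use the orbit-counting results of Davenport--Heilbronn (for $n=3$) and Bhargava (for $n=4,5$) — upgraded with congruence conditions — to count $\FF(x)$ refined by the condition that $R_f\otimes\F_p$ lies in a prescribed isomorphism class of rank-$n$ $\F_p$-algebras. The crucial input is that imposing a congruence condition modulo $p$ on $V(\Z)$ changes the count by the expected local density, with a uniform-in-$p$ error controlled by a tail estimate of the type in~\cite{BSW} — and that the "bad" primes (those dividing the discriminant) contribute negligibly. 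Summing the local mass of each conjugacy class $[g]\in\T_n$ against the correct Frobenius dictionary (the factorization type of $f \bmod p$ determines $\rho_K(\Frob_p)$ up to conjugacy) and comparing with the total, one finds that the density of fields with $\rho_K(\Frob_p)$ in a given class converges to $|[g]|/|S_n|$. This yields~\eqref{i:ST} with a power-saving error in $x$, exactly as for Theorem~\ref{th:nic}; here ordering by discriminant rather than by height is harmless because the discriminant is itself (a power of) the fundamental invariant and the counting theorems are stated in that ordering.

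**From Sato-Tate to the symmetry type.** With the Sato-Tate equidistribution~\eqref{i:ST} in hand, each $L(s,\rho_K)$ for $K$ in the family is cuspidal and orthogonally self-dual (image exactly $S_n$ for $100\%$ of the family, by Hilbert irreducibility as in the $n$-ic case), so in the dichotomy of~\cite{SST} the symmetry type is either orthogonal or symplectic, the distinction being governed by the distribution of root numbers together with the rank. Following~\cite[Conj.~2]{SST} and~\cite{Yang}, I would then check the two additional quantities: (1) the root number of $L(s,\rho_K)$ is always $+1$, since it equals the ratio of the root numbers of $\zeta_K(s)$ and $\zeta(s)$, both of which are $+1$ — alternatively by Fr\"ohlich--Queyrut~\cite{FQ} since $\rho_K$ is orthogonal; and (2) the rank of the family is zero, which is the statement that the average of $\rho_K(\Frob_p)$ over the family has the appropriate sign/size, a computation carried out via the same local densities as in~\S\ref{sub:averageFrob}. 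The adjective "homogeneous" records that the ordering by discriminant is by a homogeneous invariant, so the scaling/sieve estimates behave uniformly; combined with root number $+1$ and rank $0$, the Katz-Sarnak recipe of~\cite{SST} outputs Symplectic symmetry, i.e.\ the one-level density of low-lying zeros (for suitably restricted test-function support) matches the symplectic kernel. For $n=3,4$ this is exactly Yang's theorem~\cite{Yang}; the contribution here is the case $n=5$, which follows by feeding Bhargava's quintic parametrization and counting results into the same framework.

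**The main obstacle.** The hardest part is the uniformity of the refined counting for $S_5$: one needs the count of $\GL_4(\Z)\times\GL_5(\Z)$-orbits on the maximal locus of Bhargava's quintic space, with congruence conditions modulo $p$, to have an error term that is uniform in $p$ (power-saving in both $x$ and the modulus), so that the error survives summation over $p<y$ with $\log x/\log y$ large. This requires combining Bhargava's averaged counting (including the delicate cuspidal-region / cutting-off-the-cusp analysis) with the quantitative squarefree-sieve and tail estimates of~\cite{BSW}; the $S_5$ geometry is substantially more involved than the cubic and quartic cases, and the bookkeeping of local masses at ramified primes (where $R_f\otimes\F_p$ can be non-reduced) must be controlled. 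Once that uniform count is available, the passage to the symmetry type is formal, driven entirely by root number $+1$ and rank $0$ as in~\cite{SST}.
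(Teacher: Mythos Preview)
Your overall architecture matches the paper's: establish the quantitative Sato-Tate equidistribution~\eqref{e:thmainsncount} with densities satisfying~\eqref{cptau}, then feed this into the explicit-formula computation to extract the Symplectic one-level density. However, two points need correction.

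First, and most importantly, you misidentify the source of the uniform-in-$p$ counting. The tail estimates of~\cite{BSW} concern squarefree values of \emph{polynomial} discriminants and are used only for the monogenic family of Theorem~\ref{th:nic}. For the prehomogeneous spaces parametrizing cubic, quartic, and quintic rings (where the groups are $\GL_2$, $\GL_2\times\SL_3$, $\GL_4\times\SL_5$, not the $\GL$-products you wrote), the required uniformity comes from an entirely different toolkit: the geometry-of-numbers methods of Davenport--Heilbronn and Bhargava, refined in~\cite{TaTh} for $n=3$ and in~\cite{BBP,EPW,ShTs} for $n=4,5$. The paper simply \emph{quotes} these counts (Theorems~\ref{thcubiccount}, \ref{thquarticcount}, \ref{thquinticcount}) and reads off the constants $\delta_0,\delta_1,A,B$; your ``main obstacle'' paragraph points at the wrong literature and, were you to pursue it, you would find~\cite{BSW} inapplicable to the cuspidal analysis of $V=4\otimes\wedge^2(5)$.

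Second, your passage from Sato-Tate to symmetry type invokes the root-number/rank heuristic of~\cite[Conj.~2]{SST}. That is the \emph{prediction}; the paper's actual proof (Theorem~\ref{th:low-lying}) is a direct explicit-formula computation. The sums $\CS_1,\CS_3,\CS_\ram$ are shown to be negligible using~\eqref{e:thmainsncount} and $t_\FF(p)=O(1/p)$, while the surviving term $\CS_2$ produces the constant $\sum_{\tau}\chi(\tau^2)|\tau|/|S_n|$, which is precisely the Frobenius--Schur indicator $i_3(\FF)=1$ of the standard representation of $S_n$. It is this $+1$ that yields the $-f(0)/2$ and hence the Symplectic answer; the root number being $+1$ is not used in the computation but rather is consistent with it.
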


The thesis~\cite{Yang} is unpublished.  An account first appeared
in~\cite{SST} and Sections \ref{s:setup} and \ref{s:families} of this
paper provide more details.  A different treatment is given
in~\cite{ChoKim1,ChoKim2}.  The advantage of our treatment compared
to~\cite{Yang,ChoKim1,ChoKim2} is to make transparent the relation
between the symmetry type and the other statistical invariants of the
families.  As before the statement is to be interpreted in the sense
of the quantitative equidistribution of~\eqref{i:ST}, where the
measure on $\T_n$ is the pushforward of the normalized counting
measure on $V(\Z_p)^\max$.  We recall the concept of an homogeneous
orthogonal family in \S\ref{s:setup}.  A key aspect of the proofs of
both Theorem~\ref{th:nic} and Theorem~\ref{thmainsn} is the study of
maps
\begin{equation}\label{local-density}
V(\Z_p)^\max  \supset V(\Z_p)^\unr \twoheadrightarrow
	V(\F_p)^{\Delta\neq 0} \twoheadrightarrow \T_n,
\end{equation}
which gives the splitting type of an order unramified at $p$ in terms of
the reduction of the corresponding polynomial modulo $p$.

We shall rely in an essential way on Bhargava's work on counting and parametrizing quartic and quintic fields.  
A rank $n$ ring arises as the ring of functions of a projective set of
$n$ points defined over $\Z$, and conversely its spectrum is a set of
$n$ points. As explained in \cite[\S2]{Bquintic}, every rank $n$ ring
arises from a set of $n$ points in $\P^{n-2}$. In the case $n=3$,
where $3$ points in $\P^1$ are parametrized as the zero set of binary
cubic forms, the above construction was sufficient.
Binary $n$-ic forms parametrize sets of $n$ points in $\P^1$ which,
for $n\geq 4$, do not give rise to all rank $n$ rings,
see~\cite{Woodbnf}. To parametrize all rank $n$ rings
for $n=4,5$, Bhargava writes the $n$ points in $\P^{n-2}$ as the
intersection of quadrics. For $n=4$, a generic set of two quadrics in
$\P^2$ intersect in $4$ points. Furthermore, every set of $4$ points
in $\P^2$ arise this way. Thus quartic rings are naturally
parametrized by pairs of ternary quadratic forms~\cite{Bquartic}. We
denote the underlying space $V=2\otimes \Sym^2(3)$.
 
 In the case $n=5$, five quadrics are required to obtain an
 intersection of $5$ points. However a generic set of five quadrics do
 not intersect at all in $\P^3$. Rather it is known from the work of
 Buchsbaum and Eisenbud \cite{BE} that five quadrics in $\P^3$
 intersect in five points if and only if they arise as the $4\times 4$
 Pfaffians of an alternating $5\times 5$ matrix of linear forms in
 four variables. The underlying space is $V=4\otimes \wedge^2(5)$
 which give rise to a parametrization of quintic
 rings~\cite{Bquintic}.

In both cases $n=4,5$ we obtain a quasiprojective scheme $X\subset
V\times \P^{n-2}$ cut out by quadrics. This is a branched covering
$X\to V$ of degree $n$. As in the case $n=3$, the covering has an
equivariant $G$-structure with $G=\GL_2\times \SL_3$ if $n=4$ and
$G=\GL_4\times \SL_5$ if $n=5$. (Here, $\GL_2\times\SL_3$ acts on
$\P^2$ via the action of $\SL_3$ and $\GL_4\times \SL_5$ acts on
$\P^3$ via the action of $\GL_4$.) As before we let
$V(\Z)^{\mathrm{max}}$ (resp.  $V(\Z)^{\mathrm{smax}}$) be the set of
forms that give rise to maximal rings (resp. maximal $S_n$-rings).  As
before, we consider elements $f$ in $G(\Z)\backslash V(\Z)^{\rm
  smax}$ as parameters for maximal $S_n$-rings. These are the families
$\FF$ studied by Bhargava which parametrize $S_4$- and $S_5$-fields.
The sets $\FF(x)$ will be ordered by discriminant and the asymptotics
$|\FF(x)|\sim c x$ as $x\to \infty$ are the celebrated results
of~\cite{B1,B2}. Compared to the counting in Theorem~\ref{th:nic}
ordered by height, a major difficulty for these families ordered by
discriminant, overcome by Bhargava, is the presence of non-compact
``cusps", which means there are forms in a fundamental domain for the
$G(\Z)$-action on $V(\Z)$ that have large coefficients but small
discriminant. The Sato-Tate equidistribution~\eqref{i:ST} with a power
saving error term is obtained in~\cite{BBP,B1,B2,ShTs}.

For $n=4,5$, restricting to the nonsingular locus gives \'etale
coverings $X^{\Delta\neq 0}\to V^{\Delta\neq 0}$. Quotienting
$G$ by the subgroup that acts trivially on $X$, we obtain
an algebraic group $H$ such that $H(\C)$ acts transitively on
$V^{\Delta\neq 0}(\C)$ and acts simply transitively on
$X^{\Delta\neq 0}(\C)$. 
(See \cite[Table 1]{BSWPre} for an exact description of $H$.)
The stabilizer in $H(\C)$ of any element
in $V^{\Delta\neq 0}(\C)$ is known to be $S_n$, see \cite[\S
  7]{SatoKimura} for $n=4$ and \cite[Proposition 2.13]{Wright-Yukie}
for $n=5$. It then
follows that the normal closure of the \'etale covering
$X^{\Delta\neq 0}(\C)\to V^{\Delta\neq 0}(\C)$ has Galois group
$S_n$.  A corollary of the equidistribution~\eqref{i:ST} yields an
arithmetic proof of this algebraic result. In fact, the entire
equidistribution is not necessary; surjectivity onto $\T_n$ would
suffice. 
For the initial
family of monic degree-$n$ polynomials with $n\ge 2$, the subset
$V(\C)^{\Delta\neq 0}$ of polynomials with non-zero discriminant admits again an \'etale covering $X^{\Delta\neq 0}(\C)$ defined by their zero
locus in $\C$. The normal closure of this covering has Galois group $S_n$, a result which is well-known.
Conversely the monodromy group being the full $S_n$ is
closely related to the equidistribution~\eqref{i:ST}, and
Theorems~\ref{th:nic} and~\ref{thmainsn}.  This is a special case
of~\cite{Katz} and~\cite[\S2.11]{SST} in the case of geometric
families of higher dimensional varieties.


As a side remark it is interesting to note that non-isomorphic $S_n$-number fields $K_f$ and $K_{f'}$ have distinct Dedekind zeta functions (see~\cite{Perlis}). Since each $S_n$-field occurs exactly once, we are counting the $L$-functions $L(s,\rho_{K_f})$ also with multiplicity one. 

It is believed that for any $S_n$-number field $K$, the central value
$\zeta_K(\frac12)$ is nonzero. This belief is reinforced by the
Symplectic symmetry type of the families described above, which
thereby exhibit a repulsion of the low-lying zeros at the central
point. For quadratic fields the non-negativity of $\zeta_K(\frac12)$
implies (see \cite{Iwaniec}) a strong effective lower bound on the
class number of $K$. For $S_5$-number fields the non-vanishing of
$\zeta_K$ for real $s\in(0,1)$ is a useful hypothesis in establishing
modularity in~\cite{Calegari}.

Unconditionally Soundararajan~\cite{Sound} has proved that a positive
proportion of all quadratic number fields satisfy
$\zeta_K(\frac12)\neq 0$, which is also strengthened
in~\cite{ConreySound} into a positive proportion of non-vanishing of
$\zeta_K(s)$ for $s$ real between zero and one. The generalization to
families of $S_n$-number fields with $n\ge 3$ is still open. Our
Theorems~\ref{th:nic} and \ref{thmainsn} above are not yet strong
enough to derive a result in this direction because of the restricted
support of the one-level density.

The Sato-Tate equidistribution in $\T_n$ for the above families is
related to mass formulas~\cite{B3}. The families are
homogeneous orthogonal because the Frobenius--Schur indicator of
$S_n\subset \GL_{n-1}(\C)$ is equal to $+1$, an observation which was
also made in~\cite[Item 76]{Kowalski}.  Another interesting application to an
analogue of the Erd\"os--Kac theorem appears in~\cite{Oliver-Thorne},
and to average upper-bounds for class numbers in~\cite{EPW}. Although
not stated in~\cite{EPW}, it can be verified that their sieving
argument applies to any number fields family ordered by discriminant
that satisfies the Sato-Tate equidistribution in the sense
of~\cite[Conj.~1]{SST}.

As stated above, the root number is $+1$ for any $S_n$-number
field. In general the root number of a self-dual Artin representation
may be $-1$, the first example was given by
Armitage~\cite{Armitage}. Thus one may wonder what happens for general
families of Artin representations with a different root number. This
motivates our study of families of quaternionic fields. Let $K$ be a
quaternionic field, that is a degree eight number field whose Galois
group $\Gal(K / \Q)$ is the quaternion group $Q$ of eight
elements. There is a unique irreducible two-dimensional representation
of $Q$ and we can attach an Artin representation
\[
\rho_{K}:\Gal(K/ \Q)\simeq  Q \to\GL_{2}(\C)
\]
which is symplectic. We can view $\rho_K$ as induced from a Hecke
character of order $4$ in a quadratic extension of $\Q$ inside
$K$. Furthermore, it is known to correspond to an automorphic form on
$\mathrm{PGL}_{2}$, precisely to a (dihedral) Maass form of weight
$0$, eigenvalue $\frac14$ and trivial nebentypus, see \cite[\S 3]{BS} and the references therein.

\begin{example}
	(i) Dedekind found that $K=\Q\Bigl(\sqrt{(2+\sqrt{2})(3+\sqrt{6})}\Bigr)$ is a quaternionic extension of $\Q$ containing $\Q(\sqrt{2},\sqrt{3})$ (see \cite{Dedekind}). The root number of $\rho_K$ is $+1$.

(ii) The field $K=\Q\Bigl(\sqrt{(5+\sqrt{5})(41+6\sqrt{41})}\Bigr)$ is a quaternionic extension of $\Q$ containing $\Q(\sqrt{5},\sqrt{41})$. The root number of $\rho_K$ is $-1$.
\end{example}

To form a family, we fix an arbitrary quaternionic field $K$. Let $q\equiv 0,1 \pmod{4}$ be a fundamental discriminant that is coprime with the discriminant of $K$. Let $\chi_q$ be the associated quadratic Dirichlet character which we may also view as an Artin representation onto $\{\pm 1\}\subset \GL_1(\C)$. Consider the Artin representation that is the character twist $\rho_K\otimes \chi_q$. Since $\Gal(K(\sqrt{q})/\Q) \simeq Q \times \Z/ 2\Z$ and the representation factors through the unique non-trivial surjection $Q\times \Z/ 2\Z \to Q$, which defines a unique quaternionic field $K_q$, $\rho_K\otimes \chi_q$ is the same as $\rho_{K_q}$. We call the field $K_q$ a quadratic twist of $K$ and obtain in this way a one-parameter family of quaternionic fields.  

In Section~\ref{s:quaternion} we give an equivalent description of $K_q$ using a theorem of Witt~\cite{Witt}, and relate this to a similar construction by Fr\"ohlich~\cite{Fr}. This description also shows that the family is geometric. The following is essentially due to Rubinstein~\cite{Rubinstein}.
\begin{theorem}\label{t:quaternion}
Let $K$ be a quaternionic field and consider the above one-parameter family of quaternionic Artin representations of $K_q$ parametrized by discriminants $q$. The family is homogeneous symplectic and it has $\mathrm{SO}(\mathrm{even})$ symmetry type if $\rho_K$ has root number $+1$ and $\mathrm{SO}(\mathrm{odd})$ symmetry type if $\rho_K$ has root number $-1$.  
\end{theorem}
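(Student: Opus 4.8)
The plan is to identify $\FF$ as a family of quadratic twists of a fixed automorphic $L$-function on $\PGL_2$ and to read off its symmetry type from the~\cite{SST} framework, just as in the proofs of Theorems~\ref{th:nic} and~\ref{thmainsn}. Since $\rho_K$ corresponds to the fixed dihedral Maass form $f$ (weight $0$, eigenvalue $\tfrac14$, trivial nebentypus) recalled above, one has $L(s,\rho_{K_q})=L(s,\rho_K\otimes\chi_q)=L(s,f\otimes\chi_q)$, so $\FF$ is precisely the family of quadratic twists $f\otimes\chi_q$ with $q$ a fundamental discriminant prime to $\disc K$. The Witt-theoretic model of~\S\ref{s:quaternion} exhibits $\FF$ as a geometric family in the sense of~\cite{SST}. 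Moreover each $\rho_{K_q}=\rho_K\otimes\chi_q$ is two-dimensional of trivial determinant and preserves the symplectic form on $\rho_K$, so every member of $\FF$ is symplectic self-dual with Frobenius--Schur indicator $-1$; this is the assertion that $\FF$ is homogeneous symplectic. By the same principle that makes the $S_n$-families (whose members are orthogonal) symplectic, a family whose members are symplectic has orthogonal symmetry type, so $\FF$ falls into the orthogonal case; the remaining task is to locate it among $\mathrm{SO}(\mathrm{even})$, $\mathrm{SO}(\mathrm{odd})$, $\mathrm{O}$.

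I would first establish the Sato--Tate equidistribution~\eqref{i:ST} for $\FF$. Here the Sato--Tate group is the finite group $Q\subset\mathrm{SU}(2)$, and since $\rho_{K_q}(\Frob_p)=\chi_q(p)\,\rho_K(\Frob_p)$, the statement reduces to the joint equidistribution of $\chi_q(p)$ in $\{\pm1\}$ and of $\rho_K(\Frob_p)$ among the conjugacy classes of $Q$. The latter is Chebotarev's theorem for $K/\Q$; the former is the equidistribution of Kronecker symbols over fundamental discriminants in arithmetic progressions, and it is the average over $q\in\FF(x)$ that supplies the power-saving error term, via estimates of the shape $\sum_{q\in\FF(x)}\chi_q(n)\ll_n x^{1/2+\varepsilon}$ for $n$ not a perfect square. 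Since Haar measure on $Q$ is invariant under $g\mapsto -g$, the resulting Sato--Tate measure on $\T_n$ is the pushforward of Haar on $Q$, symmetric under twisting by the sign character; this is exactly the structural feature that, for a self-dual family, forces the orthogonal case already flagged above.

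To decide which orthogonal type occurs, the decisive invariant is the root number: I would show $W(\rho_{K_q})$ is constant over $\FF$, equal to $W(\rho_K)$, the family having correspondingly the expected rank (no excess central vanishing beyond what an odd root number forces). I expect this constancy to be the main obstacle. It is a computation of local $\epsilon$-factors. At primes $\ell\nmid q\,\disc K$ nothing is ramified. At $\ell\mid q$ one twists the unramified local representation $\rho_{K,\ell}$, which has trivial determinant, by a ramified quadratic character, and the product of these local factors with the archimedean factor telescopes to $1$ on applying the product formula for $\chi_q$ to $-1$. At $\ell\mid\disc K$ one twists $\rho_{K,\ell}$ by the unramified character $\chi_{q,\ell}$, introducing a factor $\chi_{q,\ell}(\varpi_\ell)^{a_\ell(\rho_K)}$ which is trivial because the conductor exponents $a_\ell(\rho_K)$ are even --- at odd $\ell$ tame inertia in $K$ is cyclic and acts on $\rho_K$ without fixed vectors, so $a_\ell=2$, while at $\ell=2$ the triviality of $\det\rho_K$ again forces evenness. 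Equivalently, in the Witt model $K=\Q(\sqrt a,\sqrt b,\sqrt\gamma)$ one has $K_q=\Q(\sqrt a,\sqrt b,\sqrt{q\gamma})$, and $W(\rho_K)$ is a product over places of Hilbert symbols in $a,b,\gamma$ (Fr\"ohlich~\cite{Fr}); replacing $\gamma$ by $q\gamma$ multiplies this by $\prod_v(q,\ast)_v=1$ by Hilbert reciprocity. (Both cases occur: Dedekind's field has root number $+1$ and $\Q(\sqrt{(5+\sqrt5)(41+6\sqrt{41})})$ has root number $-1$.) One also records the stable factorization $\Sym^2\rho_{K_q}=\Sym^2\rho_K=\chi_1\oplus\chi_2\oplus\chi_3$, whence $L(s,f\times f)=\zeta(s)L(s,\chi_1)L(s,\chi_2)L(s,\chi_3)$, which governs the $p^2$-contributions below.

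With these ingredients the symmetry type follows as in~\cite{SST}: the one-level density with restricted support is evaluated from the explicit formula --- the $k=1$ prime sums being negligible by the cancellation $\sum_{q\in\FF(x)}\chi_q(p)=o(|\FF(x)|)$, and the $k=2$ sums controlled by the Rankin--Selberg factorization above --- giving the common orthogonal value $\hat\phi(0)+\tfrac12\phi(0)+o(1)$ for $\hat\phi$ supported in a fixed neighbourhood of $0$; this is Rubinstein's computation~\cite{Rubinstein}. Because the admissible support is too narrow to separate $\mathrm{SO}(\mathrm{even})$, $\mathrm{SO}(\mathrm{odd})$ and $\mathrm{O}$ by the one-level density alone, one then invokes the constancy of the root number: $W(\rho_K)=+1$ gives $\mathrm{SO}(\mathrm{even})$ symmetry and $W(\rho_K)=-1$ gives $\mathrm{SO}(\mathrm{odd})$. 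The hard parts are thus the $\epsilon$-factor bookkeeping behind the constancy of $W(\rho_{K_q})$ (especially the evenness of $a_2(\rho_K)$) and the verification that $\FF$ meets the axioms of a geometric family; the analytic core is already available from~\cite{Rubinstein}.
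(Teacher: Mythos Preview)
Your proposal is essentially the paper's own approach: identify the Sato--Tate group as $Q\subset\GL_2(\C)$ with indicators $i_1=i_2=1$, $i_3=-1$ (homogeneous symplectic), obtain the one-level density $\widehat f(0)+\tfrac12 f(0)$ for restricted support via Rubinstein's computation for quadratic twists (the paper reproves this directly in Theorem~\ref{quaternion-1-level} using Burgess's bound, then cites~\cite{Rubinstein} for the improved support), and establish constancy of the root number by a place-by-place $\epsilon$-factor computation (Proposition~\ref{p:root}, which also cites Fr\"ohlich~\cite{Fr} for the alternative Hilbert-symbol argument you mention).

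One small point to correct: your claim that ``triviality of $\det\rho_K$ forces evenness'' of the conductor exponent at $2$ is not a valid general implication. The paper avoids this by first arranging $a,b\equiv 1\pmod 4$ so that $2\nmid ab$, then grouping the prime $2$ with the primes dividing $q$ rather than with those dividing $\disc K$; at such primes one uses directly that $\det\rho=1$ gives $\epsilon_p(\rho\otimes\chi_q,\psi)=\epsilon_p(\chi_q,\psi)^2$, which needs no parity statement about $a_2(\rho_K)$. (That $a_2\in\{0,4\}$ is nonetheless true here, by Lemma~\ref{l:conductor-quaternion}, but it is a consequence of the specific ramification structure of quaternionic extensions, not of the determinant being trivial.)
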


In contrast to the quadratic twists of an elliptic curve where the root numbers fluctuate, we note the interesting phenomenon that the root numbers of the quadratic twists $K_q$ of a quaternionic field are constant. We verify this in Section~\ref{s:quaternion} where we give a brief exposition of the arithmetic of quaternionic fields gathering several results scattered in the literature.

Suppose that $\rho_K$ has root number $-1$. It is believed that $\text{ord}_{s=\frac12} L(s,\rho_{K}) = 1$ and similarly for $K_q$ for all $q$. This belief is reinforced by the  $\mathrm{SO}(\mathrm{odd})$ symmetry type of the family which defines the same determinantal point process as the union of $\Sp(\infty)$ and a single zero at $\frac12$.

In Section~\ref{s:other} we investigate two situations where one constructs a geometric family starting from another. The first construction is due to Davenport-Heilbronn. Starting from a binary cubic form $f \in V(\Z)$ we attach the quadratic field whose discriminant is $\Delta(f)$. Geometrically this is a branched covering of $V$ of degree two which is again $\GL_{2}$-equivariant (to be compared with the branched covering of degree three parametrizing cubic fields). It is famously used to determine asymptotically the average size of the $3$-part of the class group of quadratic fields. Unsurprisingly, we show in Section~\ref{s:other} the Sato-Tate equidistribution in $\T_2$ for this family. Similarly the second construction comes from Bhargava's parametrization of the pairs of quartic rings together with their resolvent rings. This yields a $\GL_{2}\times \SL_{3}$-equivariant covering of degree three which can be used to determine the average size of the $2$-part of the class group of cubic fields~\cite{B1}. We prove that the Sato-Tate equidistribution in $\T_3$ holds for this family.

In all of the above families the rank of the family is zero in the sense of~\cite{SST}. The average trace of Frobenius is a Weil number of integer weight which geometrically comes from the fact that we are counting orbits of points of varieties over finite fields.  Thus it is always the case that the rank is zero for any  geometric family of number fields because the construction involves the $H^0$ of the zero-dimensional fibers. This is consistent with the belief that Artin $L$-functions never vanish at the central point except when forced by the root number being $-1$.

It would be interesting to obtain similar results when $\FF(x)$ is the
set of all $S_n$-number fields of discriminant at most $x$.  It is
possible to view $\FF(x)$ as a parametric set by considering the
configuration of $n$ points in $\P^{n-2}$ modulo the action by
$\GL_{n-1}$. For $n\geq 4$, this yields an algebraic variety $V$ which
can always be cut out by a certain number of quadrics~\cite[Theorem
  138]{Wilson}. A conjecture of Bhargava~\cite{B3} predicts an
asymptotics $|\FF(x)| \sim c_n x$ as $x\to \infty$ and moreover the
mass conjecture~\cite{B3} would also imply the Sato-Tate
equidistribution in the same way as we have proceeded for the other
families of the present paper. For $n\le 5$ the variety $V$ can be
parametrized by a prehomogeneous group action on a vector space by the
results of Davenport-Heilbronn~\cite{DH} and
Bhargava~\cite{Bquartic,Bquintic} as mentioned above and $\FF$ becomes
a parametric family in the sense of~\cite{SST}. For $n\ge 6$ this is
not the case and thus the study of rational points in $V(\Z)$ is an
extremely delicate problem. For the same reason it is not possible to
include such parameter spaces in the definition of geometric families
in~\cite{SST}; working in such complete generality would allow too
many pathologies in the asymptotic of families, see~\cite[\S3.1]{SST}.

As explained above the families are obtained by a sieving process of the forms $f\in V(\Z)$. In this process we can extract the forms $f$ that give rise to number fields with a constant $S_n$ Galois group. It is interesting to study what happens if we form families starting from the same space but without sieving. Then the Galois group of $M_f/ \Q$ can vary with $f$. So we call these mixed families. These mixed families fit in the framework of~\cite{SST} and  we shall explain that the Sato-Tate equidistribution holds for them as anticipated in~\cite{SST}. Interestingly it is shown in~\cite{Wood:D4} that the family of $D_4$-fields ordered by discriminant does not have a mass formula. The Sato-Tate measure is a linear combination of Sato-Tate measures attached to the Haar measures on different Galois groups which occur with positive proportion. Serre also describes the possible Sato-Tate measures in this way in his recent book~\cite{Serre:NXp}. One interesting case is the mixture of $S_4$- and $D_4$-fields arising from pairs of ternary quadratic forms, see Section~\ref{s:mixed}.  Incidentally the quantitative equidistribution for the	family of $D_4$-fields is not yet established.

Let us also mention some other open questions that arise from our perspective on families and on which we hope to return elsewhere. Besides the one-parameter families explored in Section~\ref{s:quaternion} it would be interesting to study other families of quaternionic fields (see \cite{Kluners, FLPS}). In this paper we do not consider lower order terms as in, e.g., \cite{Anders}; these can be seen to be related to the counting measures on $V(\Z/p^r\Z)$ of Section~\ref{s:mass}. Finally, it should be possible to improve the remainder terms and support for one-level density using for example large sieve inequalities and Fourier transforms of orbital measures. 



\subsection*{Acknowledgments} 
We thank Manjul Bhargava, Peter Sarnak and Jacob Tsimerman for many
helpful discussions and Melanie Wood for comments and for sending us~\cite{EPW}. We thank the referee for a careful reading and helpful suggestions. Most of this work was done while the authors were
at Princeton and it is a pleasure to thank the IAS for providing
excellent working conditions. We also enjoyed the hospitality of
Harvard University, Boston College, and Carleton University. 
The first-named author was partially supported by NSF grant DMS-1128155. The second-named author was
supported by NSF grant DMS-1128155, and by a grant from the Danish Council for Independent Research
and FP7 Marie Curie Actions-COFUND (grant id: DFF-1325-00058). The
third-named author was supported by NSF grant DMS-1200684.

\section{General setup for zeta functions of degree $n$ number fields}\label{s:setup}
Let $K$ be a degree-$n$ number field with normal closure $M$. Then $\zeta_K(s)$ is the
Artin $L$-function corresponding to the trivial representation of the
absolute Galois group $\Gal(K)$. The Galois group $\Gal(K)$ is an
index $n$ subgroup of $\Gal(\Q)$ and we have
$$\zeta_K(s) = L(s,\Ind_{\Gal(K)}^{\Gal(\Q)}1).$$ The representation
$\Ind_{\Gal(K)}^{\Gal(\Q)}1$ of $\Gal(\Q)$ factors through
$\Gal(M/\Q)\hookrightarrow S_n$ and decomposes into the direct sum of
the trivial representation and the composition with the standard representation $\rho :
S_n\to\GL_{n-1}(\C)$.  Therefore, we
have 
\begin{equation}\label{Dedekind-relation}
\zeta_K(s)=\zeta(s)L(s,\rho_K),
\end{equation} 
where $\zeta(s)$ is the
Riemann zeta function and $L(s,\rho_K)$ is the Artin $L$-function
corresponding to
\begin{equation}\label{rhoK}
\rho_K:\Gal(\Q)\to\Gal(M/\Q)\hookrightarrow  S_n\to\GL_{n-1}(\C).
\end{equation}
Note that the conductor of $L(s,\rho_K)$ is equal to the conductor
of $\zeta_K(s)$. We denote it by $C_K$. It follows from Artin's
conductor-discriminant formula that $C_K$ is equal to the absolute
value of the discriminant $\Delta(K)$ of the number field $K$.

We will study the statistics of the low-lying zeros of $L(s,\rho_K)$
by summing these zeros against a test function always denoted by
$f$. We pick $f$ to be an even Paley-Wiener function on $\R$, in the sense that its Fourier transform
\begin{equation}\label{eqft}
\widehat{f}(x):=\int_{-\infty}^\infty f(y)e^{-2\pi ixy}dy
\end{equation}
is smooth and of compact support. If $\widehat{f}$ has
support contained in $[-\alpha,\alpha]$, then $f$ can be
extended to an entire function of exponential type $\alpha$.
The first step towards understanding the statistics of the zeros of
$L(s,\rho_K)$ is the explicit formula.  For each $m\ge 1$, we write
$\lambda_K(m)$ for the Dirichlet coefficients of $L(s,\rho_K)$. Note
that $\lambda_K$ is integer valued and that $\sum_{d\mid m} \lambda_K(d)$ is the number of ideals of $K$ of
norm $m$. We write the logarithmic derivative of $L(s,\rho_K)$ for
$\Re(s)>1$ as
\begin{equation}\label{logder}
-\frac{L'}{L}(s,\rho_K)=\sum_{m=1}^\infty \frac{\theta_K(m)\Lambda(m)}{m^{s}},
\end{equation}
where $\Lambda$ is the von Mangoldt function. We state the explicit
formula in the form of~\cite[Proposition 2.1]{RS}:

	We write the nontrivial zeros of $L(s,\rho_K)$ as
        $\tfrac12 +i\gamma^{(j)}_K$, where the imaginary parts of
        $\gamma^{(j)}_K$ have absolute value bounded by
        $1/2$. (Under GRH, the $\gamma^{(j)}_K$ are real.) Similarly we denote the poles of $L(s,\rho_K)$ by $\tfrac12 + ir_K^{(j)}$. 
\begin{proposition}\label{propexplicit}
With notation as above, if $K$ is a degree-$n$ number field and $f$ is an even Paley-Wiener function, then 
\begin{equation}\label{eqexplicit}
	\sum_j f\big(\gamma^{(j)}_K\big) 
- \sum_jf\big(r^{(j)}_K \big)
=\frac1{2\pi}\int_{-\infty}^\infty f(t)(\log
C_K+O(1))dt -\frac{1}{\pi}\sum_{m=1}^\infty
\frac{\theta_K(m)\Lambda(m)}{\sqrt{m}}\widehat{f}\Bigl(\frac{\log m}{2\pi}\Bigr).
\end{equation}
\end{proposition}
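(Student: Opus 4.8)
The plan is to derive the explicit formula~\eqref{eqexplicit} from the standard contour-integration argument applied to the completed $L$-function, following the template of~\cite[Proposition 2.1]{RS}. First I would form the completed Artin $L$-function $\Lambda(s,\rho_K) = C_K^{s/2} L_\infty(s,\rho_K) L(s,\rho_K)$, where $L_\infty$ is the appropriate product of Gamma factors determined by the archimedean Hodge type of $\rho_K$; since $\rho_K$ appears inside $\zeta_K(s)/\zeta(s)$, the gamma factor is that of $\zeta_K$ divided by that of $\zeta$, so it is an explicit product of $\Gamma_\R(s)$ and $\Gamma_\C(s)$ factors with $r_1-1$ and $r_2$ (resp.\ $r_2$) exponents according to the signature of $K$. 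The function $\Lambda(s,\rho_K)$ is meromorphic of order $1$, satisfies the functional equation $\Lambda(s,\rho_K) = \varepsilon_K \Lambda(1-s,\rho_K)$ with $|\varepsilon_K|=1$, and its only possible poles are at $s=0,1$ coming from the pole of $\zeta_K$; these are exactly the poles recorded as $\tfrac12 + i r_K^{(j)}$ in the statement.

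Next I would apply Cauchy's theorem to $\frac{1}{2\pi i}\int \frac{\Lambda'}{\Lambda}(s,\rho_K)\, F(s)\, ds$ over a tall rectangle with sides $\Re(s) = 1+\delta$ and $\Re(s) = -\delta$, where $F(s) = f\big((s-\tfrac12)/i\big)$; since $f$ is Paley--Wiener, $F$ is entire of exponential type and decays rapidly on vertical lines, so the horizontal segments contribute nothing in the limit. The residues at the zeros and poles of $\Lambda(s,\rho_K)$ produce the left-hand side $\sum_j f(\gamma_K^{(j)}) - \sum_j f(r_K^{(j)})$. On the right-hand side, the logarithmic derivative $\frac{\Lambda'}{\Lambda} = \tfrac12\log C_K + \frac{L_\infty'}{L_\infty}(s,\rho_K) + \frac{L'}{L}(s,\rho_K)$ splits the integral into three pieces: the $\tfrac12\log C_K$ term gives $\frac{1}{2\pi}\int f(t)\log C_K\, dt$; the archimedean term, after moving to the critical line and using the functional equation to symmetrize, contributes $\frac{1}{2\pi}\int f(t)\,\big(O(1)\big)\, dt$ by Stirling's estimate for $\Gamma'/\Gamma$ (here one uses that $f$ has compact Fourier support so the digamma integral converges and is $O(1)$ uniformly); and the finite Euler-product term, shifted to $\Re(s) = 1+\delta$ and expanded via~\eqref{logder}, becomes $-\frac{1}{\pi}\sum_{m\ge 1}\frac{\theta_K(m)\Lambda(m)}{\sqrt{m}}\,\widehat f\big(\tfrac{\log m}{2\pi}\big)$ after recognizing $\int F(s) m^{-s}\, ds$ as a Fourier transform evaluated at $\tfrac{\log m}{2\pi}$.

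The main subtlety, rather than a deep obstacle, is bookkeeping the archimedean factor correctly and checking that its contribution really is absorbed into $\frac{1}{2\pi}\int f(t)\,O(1)\,dt$ uniformly in $K$: the gamma factor depends on the signature $(r_1, r_2)$ of $K$, which varies over the family, so one must confirm the $O(1)$ is independent of $K$ (it is, since $r_1 + 2r_2 = n$ is fixed and each $\Gamma$-factor contributes a bounded amount on the relevant range). A second routine point is justifying the interchange of summation and integration in the Euler-product term and the convergence of the sum over $m$, which both follow immediately from the compact support of $\widehat f$ (the sum is finite) together with the bound $|\theta_K(m)| \le n-1$. Finally, one notes that writing $\frac12 + i\gamma_K^{(j)}$ with $|\Im \gamma_K^{(j)}| \le \tfrac12$ is legitimate because $L(s,\rho_K)$ has no zeros with $\Re(s) > 1$ or $\Re(s) < 0$ (Euler product and functional equation), so all nontrivial zeros lie in the critical strip; under GRH they are real, but the statement as given does not assume GRH, and the contour argument goes through regardless.
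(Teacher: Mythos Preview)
Your overall approach is the standard one and is essentially what~\cite{RS} does; note that the paper itself gives no proof of this proposition but simply invokes~\cite[Proposition~2.1]{RS}, so there is nothing further to compare against.

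There is, however, one genuine misstatement in your outline that you should correct. You write that the only possible poles of $\Lambda(s,\rho_K)$ are at $s=0,1$, ``coming from the pole of $\zeta_K$''. This is not right: since $L(s,\rho_K)=\zeta_K(s)/\zeta(s)$, the simple poles of $\zeta_K$ and $\zeta$ at $s=1$ cancel, and $L(s,\rho_K)$ is in fact regular at $s=0,1$. Its possible poles are at the \emph{zeros} of $\zeta(s)$, hence inside the critical strip (this is exactly how the paper uses them later, remarking that ``the only possible locations for poles of $L(s,\rho_K)$ are at the zeros of $\zeta(s)$''). The gamma factor does not vanish in the critical strip, so the same holds for $\Lambda(s,\rho_K)$. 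Fortunately this does not damage the mechanics of your contour argument at all: the residue calculus picks up $-\sum_j f(r_K^{(j)})$ from whatever poles there are, wherever they sit in the strip, and the rest of your treatment (conductor term, Stirling for the archimedean piece with the uniformity in $K$ coming from $r_1+2r_2=n$ fixed, and the Dirichlet-series piece via~\eqref{logder} and compact support of $\widehat f$) goes through unchanged.
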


\subsection{Frobenius and splitting types} Let $p$ be a prime that is unramified in $K$. Let $\cO_K$ denote the ring of integers of $K$ and write
$$
\cO_K/(p)=\F_{p^{f_1}}\oplus\F_{p^{f_2}}\oplus\cdots\oplus\F_{p^{f_k}},
$$ with $f_1\geq f_2\geq\cdots\geq f_k$.  Then the {\it splitting
  type} of $p$ in $K$ is defined to be $(f_1f_2\ldots f_k)$. Thus, the
set of possible splitting types for unramified primes can be naturally
identified with the set of partitions of~$n$, or equivalently with
$\T_n$, the set of conjugacy classes of $S_n$.

Our goal now is to relate the splitting type $\tau\in \T_n$ of $p$ to
the coefficients of the Euler factor at $p$ of the $L$-function
$L(s,\rho_K)$.  To this end, we need to relate it to the Frobenius
conjugacy class of $p$ in $\Gal(M/\Q)$. We follow the short and
elegant exposition of Wood \cite{Wood}.

Let $\mathfrak{p}\subset M$ be a fixed prime ideal lying above the unramified prime $p$, and let $G_{\mathfrak{p}}$ denote the decomposition group.  This group is cyclic and is generated by $\Frob_{M/\Q}\mathfrak{p}$. The conjugacy class of $\Frob_{M/\Q}\mathfrak{p}$ is independent of the choice of $\mathfrak{p}$ above $p$ and from now on we denote this class by $\Frob_p$. 
Then the splitting type of $p$ and the action of $\Frob_p$ correspond
to the same partition of $n$. Equivalently $\rho(\tau)$ and
$\rho(\Frob_p)$ are conjugate. We denote this by writing $\rho(\tau)\sim\rho(\Frob_p)$.

\begin{lemma}\label{lemap}
	Let $\chi$ denote the character of the standard representation $\rho$ of $S_n$. If $p$ is unramified in $K$ and its splitting type is $\tau\in\T_n$, then we
have $\theta_K(p^k)=\chi(\tau^k)$ for all $k\ge 0$. Furthermore, for any rational prime $p$ and $k\ge 0$, we have $|\theta_K(p^k)|\leq n-1$.
\end{lemma}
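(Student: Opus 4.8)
The plan is to unwind the Euler factor of $L(s,\rho_K)$ at an unramified prime $p$ and match it against the character values of $\rho$, using the dictionary recalled just above between the splitting type $\tau$, the Frobenius class $\Frob_p$, and the partition of $n$. First I would fix such a $p$, with $\rho(\tau)\sim\rho(\Frob_p)$, and write the local Euler factor as $\det\bigl(I - \rho(\Frob_p)\,p^{-s}\bigr)^{-1}$. Taking $-\log$ of this and differentiating, the standard identity $-\frac{d}{ds}\log\det(I-Ap^{-s})^{-1} = \sum_{k\ge 1}\operatorname{tr}(A^k)\,(\log p)\,p^{-ks}$ shows that the coefficient of $\Lambda(p^k)p^{-ks}=(\log p)p^{-ks}$ in $-L'/L(s,\rho_K)$ is $\operatorname{tr}\bigl(\rho(\Frob_p)^k\bigr)=\chi(\Frob_p^k)$. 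Comparing with~\eqref{logder}, this gives $\theta_K(p^k)=\chi(\Frob_p^k)$. Since $\rho(\Frob_p)\sim\rho(\tau)$ as conjugacy classes in $S_n$ and $\chi$ is a class function, we get $\theta_K(p^k)=\chi(\tau^k)$ for all $k\ge 0$ (the case $k=0$ being $\theta_K(1)=\chi(\mathrm{id})=n-1$, consistent with the normalization).

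For the uniform bound $|\theta_K(p^k)|\le n-1$, I would treat the unramified and ramified primes separately. For $p$ unramified we have just shown $\theta_K(p^k)=\chi(\tau^k)=\operatorname{tr}(\sigma^k)$ where $\sigma\in S_n$ is a permutation in the class $\tau$; since $\sigma^k$ is again a permutation matrix on $n$ points, its trace on the standard $(n-1)$-dimensional representation is $(\#\text{fixed points of }\sigma^k) - 1$, a real number between $-1$ and $n-1$, so $|\theta_K(p^k)|\le n-1$. For $p$ ramified, $\theta_K(p^k)$ is determined by the action of a lift of $\Frob_p$ on the inertia-invariants, equivalently by the semisimplified local representation at $p$; writing $\rho_K$ restricted to the decomposition group in semisimplified form, $\theta_K(p^k)$ is the trace of the $k$-th power of a matrix all of whose eigenvalues are roots of unity (the Frobenius eigenvalues on the inertia-invariant subspace, which has dimension $\le n-1$), hence $|\theta_K(p^k)|$ is bounded by that dimension, which is $\le n-1$.

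The main obstacle is the ramified case: one must be a little careful about how $\theta_K(p^k)$ is defined at ramified primes (it is read off from the local $L$-factor, which records only the inertia-invariant part of $\rho_K$), and about the claim that the relevant Frobenius eigenvalues are roots of unity — this is standard for Artin representations, since $\rho_K$ factors through the finite group $\Gal(M/\Q)$, so every eigenvalue of every $\rho_K(g)$, and in particular of Frobenius acting on any invariant subspace, is a root of unity. Once that is granted, the eigenvalue count gives the bound immediately. Alternatively, and perhaps more cleanly, one can bound $|\theta_K(p^k)|$ directly by noting that the Euler factor of $\zeta_K(s)$ at any prime (ramified or not) has the form $\prod_i (1-p^{-f_i s})^{-1}$ with $\sum_i f_i \le n$, so the coefficient $\theta_K(p^k)\Lambda(p^k)$ inherited from $\zeta_K$ via~\eqref{Dedekind-relation} differs from the corresponding coefficient of $-\zeta'/\zeta$ by exactly the $\zeta$-contribution $1$, and a term-by-term comparison yields $-1 \le \theta_K(p^k) \le n-1$; I would present whichever of these two arguments is shorter in context, most likely the eigenvalue-counting one for uniformity with the unramified case.
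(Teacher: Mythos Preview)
Your proposal is correct and follows essentially the same route as the paper: compute the logarithmic derivative of the local Euler factor $\det(I-\rho(\Frob_p)p^{-s})^{-1}$ to extract $\theta_K(p^k)=\chi(\tau^k)$, and for the ramified bound invoke the Artin formula $L_p(s,\rho_K)=\det(1-p^{-s}\rho_K(\Frob_p)\mid V^{\rho_K(I_p)})^{-1}$ together with the fact that the Frobenius eigenvalues have absolute value $1$. The only cosmetic differences are that the paper writes out the eigenvalues $\alpha_1,\ldots,\alpha_{n-1}$ explicitly before taking the logarithmic derivative, and bounds $|\theta_K(p^k)|$ via $|\alpha_i|=1$ rather than via your fixed-point count (which in fact gives the sharper two-sided bound $-1\le\theta_K(p^k)\le n-1$).
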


\begin{proof}
Suppose that the splitting type of $p$ in $K$ is $\tau\in\T_n$.  Since
$p$ does not ramify in $K$, the Euler factor $L_p(s,\rho_K)$ at $p$ is equal to
\[
\det(I-p^{-s}\rho_K(\Frob_p))^{-1}=\det(I-p^{-s}\rho(\tau))^{-1}
=\prod_{i=1}^{n-1}(1-  \alpha_i p^{-s})^{-1}
\] 
where $\alpha_1,\ldots,\alpha_{n-1}$ denote the eigenvalues of $\rho(\tau)$.
The identity holds for $\Re(s)>1$, since $|\alpha_i|=1$ for all $i$ and the
eigenvalues of $\rho(\tau^k)$ are $\alpha_i^k$.
In particular $\lambda_K(p^k)=s_r(\alpha_1,\ldots,\alpha_{n-1}) =\mathrm{tr}(\mathrm{sym}^k \rho(\tau))$, where $s_r$ is a Schur
polynomial.

Computing the logarithmic derivative, we obtain
\begin{equation*}
  \frac{L_p'}{L_p}(s,\rho_K)=\Bigl(\sum_{i=1}^{n-1}\sum_{k\geq 1}\frac{1}{k}\alpha_i^kp^{-ks}\Bigr)'
  =-\sum_{k\geq 1}\Bigl(\sum_{i=1}^{n-1}\alpha_i^k\Bigr)(\log p) p^{-ks}=
  -\log p\sum_{k\geq 1}\frac{\chi(\tau^k)}{p^{ks}}.
\end{equation*}
  Comparing this with
\eqref{logder} yields $\theta_K(p^k)= \sum\limits^{n-1}_{i=1} \alpha_i^k = \chi(\tau^k)$ which is the first assertion of the lemma.


For any prime $p$, the Artin formula states that
\begin{equation*}
L_p(s,\rho_K) = \det\bigl(1 - p^{-s}\rho_K(\Frob_p) | V^{\rho_K(I_p)}\bigr)^{-1}
\end{equation*}
is the Euler factor of $L(s,\rho_K)$ at $p$, where $V=\C^{n-1}$ is the
underlying space of $\rho_K$ and $V^{\rho_K(I_p)}$ is the subspace of
$V$ where the inertia group $I_p$ acts trivially. The second assertion
now follows similarly as in the unramified case since the eigenvalues of $\rho_K(\Frob_p)$ have absolute
value $1$.
\end{proof}

\begin{remark}
The relation between the two arithmetic functions $\lambda_K$ and $\theta_K$ follows either by computing the logarithmic derivative in~\eqref{logder}, or by expressing
elementary symmetric polynomials in terms of Schur polynomials. For example $\theta_K(p)=\lambda_K(p)$ for all primes $p$.
Whereas, $\theta_K(p^2)=2\lambda_K(p^2)-\lambda_K(p)^2$, and
$\theta_K(p^3)=3 \lambda_K(p^3)-3\lambda_K(p)\lambda_K(p^2)+\lambda_K(p)^3$, and similar formulas for higher powers.
\end{remark}

If $K$ is an $S_n$-number field we can illustrate the above
construction further. The subfield $K$ of $M$ corresponds to a
subgroup $S_{n-1}$ of $S_n$, and the different embeddings
$K\hookrightarrow M$ correspond to cosets $S_{n-1}\backslash S_n$.
The group $G_{\mathfrak{p}}$ acts on the coset space
$S_{n-1}\backslash S_n$. Let $O_1,\ldots,O_k$ be the corresponding set
of orbits, ordered by size.  Then the splitting type of $p$ in $K$ is
$\tau=(\#O_1\ldots\#O_k)$. We identify $\Frob_p$ with a conjugacy
class in $S_n$ and can simply write $\Frob_p=\tau \in \T_n$ instead of
$\rho_K(\Frob_p)=\rho(\tau)$.

\subsection{Finite \'etale coverings}\label{sub:etale}
For each of the families $\FF$ considered in this paper we have a branched covering  $X\to V$ of degree $n$. The ramified locus on $V$ is given by the equation $\Delta=0$. The restriction of the covering to $V^{\Delta\neq 0}$ is finite \'etale and the normal closure has Galois group $H\hookrightarrow S_n$. 
This is a special case of~\cite{Katz} and~\cite[\S2.11]{SST} which treat the monodromy of general geometric families.

For each $f\in V(\Z)$ such that $\Delta(f) \neq 0$, the fiber $X_f$
consists of $n$ points defined over $\Z$. The individual points
themselves are elements of $\P^k(\overline{\Q})$, for some $k$
depending on $V$, but $X_f$ considered as a scheme is defined over
$\Z$. For example, when $V$ is the space of binary cubic forms, the
space $X$ is the subset of $V\times\P^1$ consisting of elements
$(f,\theta)$ such that $f(\theta)=0$, and the fiber $X_f$ can be
identified with the three roots of $f$ in $\P^1(\overline{\Q})$. When
$V$ parametrizes $S_n$-fields for $n=3$, $4$, and $5$, we have $k=1$,
$2$, and $3$, respectively. The action of $\Gal(\Q)$ on $H^0_{\et}(X_f
\times_\Q \overline{\Q},\Q_\ell)$ factors through the standard
representation of $\Gal(M_f / \Q)$.  The degree-$n$ field $K_f$ is cut
out by the stabilizer of this action.  Moreover, the cohomology of
fibers induces a lisse sheaf on $V^{\Delta\neq 0}$ of dimension $n-1$
whose stalk over each $f\in V(\Q)^{\Delta\neq 0}$ is isomorphic to
$K_f/ \Q$.  There is a monodromy action by $\pi_1(V^{\Delta\neq 0})$
and the image is $H \hookrightarrow S_n\subset \GL_{n-1}(\Q_\ell)$.

The Sato-Tate measure $\mu_{\mathrm{ST}}(\FF)$ attached to the family
is the pushforward of the Haar measure of $H$ to the space $\TT$ of
conjugacy classes of semisimple unitary matrices in $\GL_{n-1}(\C)$. By
construction it is supported on $\T_n$, for the natural inclusion
$\T_n\subset \TT$.  The Frobenius-Schur indicator is defined as
\[
i_3(\FF):=
\int_{\TT} \mathrm{tr} (t^2)\mu_{ST}(\FF)(dt).
\]
\begin{example}
If the normal closure of the covering $X\to V$ has Galois group
$H\simeq S_n$, then we say that $\FF$ is an $S_n$-family. In this case,
\begin{equation}\label{SnSatoTate}
\mu_{\mathrm{ST}}(\FF)(\{\tau\}) = 
\frac{|\tau|}{|S_n|}
\end{equation}
for every $\tau \in \T_n$, and $i_3(\FF)=1$. Indeed, the first claim
follows from the definition of $\mu_{\mathrm{ST}}(\FF)$, while the
second claim follows from a result of Frobenius--Schur \cite{FSO} (see
also \cite[Theorem 3.1 (page 151)]{FST} for the statement in modern notation). In
the terminology of~\cite{SST}, an $S_n$-family $\FF$ is homogeneous
orthogonal. This expresses that the Sato-Tate group $S_n$ acts
irreducibly on $\C^n$ and preserves a symmetric bilinear form, i.e.
the representation is real.  (In fact all irreducible representations
of $S_n$ are defined over $\Q$.)
\end{example}

For each prime $p$ we can base change to the finite field $\F_p$. If
$\Delta(f) \not \equiv 0 \pmod{p}$ then $X_f\otimes_\Z \F_p$ is
reduced and the same construction yields an action of
$\Gal(\F_p)$. Since $R_f$ is maximal at $p$, the action of Frobenius
determines the splitting type of $p$ in $K_f$. In particular the local
$L$-factor $L_p(s,\rho_{K_f})$ is uniquely determined by the base
change data of $R_f\otimes_\Z \F_p$. This is a fact that we shall use
repeatedly and which is a special case of a theorem of
Grothendieck~\cite{SGA4.5}. If moreover the covering is
$G$-equivariant for some algebraic group $G$, then the action carries
over to the reductions mod $p$ and we obtain a $G(\F_p)$-action on
$V(\F_p)^{\Delta\neq 0}$.

It is possible to prove~\cite{Katz,SST} in this generality that
\begin{equation}\label{local-frob} 
\left\{
\rho_{K_f}(\mathrm{Frob}_p):\ f\in V(\F_p)^{\Delta\neq 0}
\right\}
\end{equation}
is equidistributed as $p\to \infty$ with respect to the Sato-Tate measure $\mu_{\mathrm{ST}}(\FF)$.

\subsection{Families of degree-$n$ number fields} \label{s:families-degree-n}
In this subsection, we assume that we start with a parametric family
$\FF$ of degree-$n$ number fields as above. Recall that we abuse
notation and refer to both the family of number fields and the family
of associated $L$-functions by $\FF$.

We order the elements of $\FF$ by a height function
$h:\FF\to\R_{>0}$. When possible we choose $h(K)$ to be $|\Delta(K)|$
which is equal to the conductor of the corresponding $L$-function
$L(s,\rho_K)$. However, in some cases where it is difficult to count
elements in $\FF$ having bounded discriminant, we choose $h$ to be an
approximation of $|\Delta|$.  For $x\in \R_{\ge 1}$ we define
\[
\displaystyle \FF(x)=\{K\in\FF:h(K)<x\}.
\]
Moreover for a prime $p$, and $\tau \in \mathcal{T}_n$, define
\begin{equation*}
\begin{array}{rcl}
\displaystyle \FF^{p\nmid\Delta}(x)&=&\{K\in\FF(x):p\nmid\Delta(K)\},\\[.1in]
\displaystyle \FF^{p,\tau}(x)&=&\{K\in\FF^{p\nmid \Delta}(x):\rho_K(\Frob_p)\sim\rho(\tau)\},\\[.1in]
\displaystyle \FF^{p\mid\Delta}(x)&=&\{K\in\FF(x):p\mid\Delta(K)\}.
\end{array}
\end{equation*}
 Note that we have disjoint decompositions
 \[
  \FF(x) = \FF^{p\nmid\Delta}(x) \sqcup \FF^{p\mid\Delta}(x)
 \quad \text{ and } \quad
 \FF^{p\nmid\Delta}(x) = \bigsqcup_{\tau\in \mathcal{T}_n}\FF^{p,\tau}(x).
 \]

The main input into proving Theorem \ref{th:nic} and Theorem \ref{thmainsn} will be a counting result that estimates the number of elements in $\FF^{p,\tau}(x)$ with a power saving error term that satisfies some uniformity over $p$. More precisely, we say in the context of this paper that the Sato-Tate equidistribution holds for $\FF$ if 
there exist constants $\delta_1<\delta_0<1$ and $A,B<\infty$, and for each prime $p$ and
$\tau\in \mathcal{T}_n$ constants $0<c_{p,\tau},c_{p\mid\Delta}<1$ such that for all $x\ge 1$:
\begin{equation}\label{e:thmainsncount}
\begin{array}{rcl}
	\displaystyle |\FF^{p,\tau}(x)|&=&c_{p,\tau}|\FF(x)| +O(|\FF(x)|^{\delta_0}) + O(|\FF(x)|^{\delta_1} p^{A});\\[.1in]
\displaystyle |\FF^{p\mid\Delta}(x)|&=&c_{p\mid \Delta}|\FF(x)|+O(|\FF(x)|^{\delta_0})+O(|\FF(x)|^{\delta_1} p^{B}).
\end{array}
\end{equation}
\begin{remark}

The remainder terms in \eqref{e:thmainsncount} are all dominated by $O(|\FF(x)|^{\delta_0}p^{\max(A,B)})$ which would be sufficient for our purpose to establish the statistics of low-lying zeros for some positive support. However, we write the formulas~\eqref{e:thmainsncount} in this more precise form because this is what the proof naturally produces for geometric families and this yields an improved support. 
\end{remark}

The constants $c_{p,\tau}$ in fact determine the unramified part of the
probability measure $\mu_p(\FF)$ defined in~\cite[Conj.1]{SST}.
The ramified part of the measure $\mu_p(\FF)$ is more complicated and will be discussed in Section~\ref{s:mass}.
 It is clear that for every prime $p$,
\[
c_{p \mid \Delta} + \sum_{\tau \in \T_n} c_{p,\tau} = 1.
\]
Let $\TT:=(S^1)^{n-1}/S_{n-1}$, which can be identified with the set of conjugacy
classes of semisimple matrices in the compact unitary group $U_{n-1}$. 
The standard representation $S_n \to U_{n-1}$ induces a natural inclusion $\T_n \subset \TT$. Concretely, say that $\tau\in \T_n$ corresponds to the
partition $(f_1f_2\ldots f_k)$. Then we form the $n$-tuple of $f_i$-th roots of unity, for $1\le i \le k$, which is an element of $(S^1)^n$, and we remove
the trivial root $1$ once, to obtain an element of $\TT$.

Up to a scalar, $\mu_p(\FF)$ is the counting measure on the set~\eqref{local-frob} of splitting types modulo $p$. Precisely, the unramified part $\mu_p(\FF)_{|\TT}$ is supported on $\T_n\subset \TT$, and for every $\tau\in \T_n$,
\[
 \mu_p(\FF)(\{\tau\}) = c_{p,\tau} .
\]
Thus the unramified part $\mu_p(\FF)_{|\TT}$ is a measure of total mass
$\mu_p(\FF)(\TT) = 1-c_{p\mid\Delta}$.

For the
families $\FF$ obtained by application of a square-free sieve to $V(\Z)$, we have that $c_{p,\tau}$ is given by a $p$-adic density.
In all such cases we have the identity
\[
\frac{c_{p,\tau}}{1-c_{p|\Delta}}
=
\frac{|V(\F_p)^\tau|}{|V(\F_p)^{\Delta\neq 0}|},
\] 
where $V(\F_p)^\tau$ is the set of all elements in $V(\F_p)$ having splitting type $\tau$. Thus in view of \S\ref{sub:etale}, and the fact that $c_{p|\Delta}\to 0$, we have
\[
\mu_p(\FF)_{|\TT} \rightharpoonup \mu_{\mathrm{ST}}(\FF).
\]
Equivalently for each $\tau\in \T_n$, we have that $c_{p,\tau}$ converges to $\mu_{\mathrm{ST}}(\FF)(\{\tau\})$ as $p\to \infty$.

One quantity that is especially important in the study of $\FF$ is the average trace of unramified Frobenius. With the above notation it can be expressed as
\[
t_{\FF}(p) := \sum_{\tau\in \T_n} c_{p,\tau}\chi(\tau) 
=\int_{\TT} \mathrm{tr}(t) \mu_p(\FF)(dt).
\]
We have that
$\dfrac{t_{\FF}(p)}{1-c_{p | \Delta}}$ is a sum of $p$-Weil numbers with integer weights. By orthogonality of characters, it follows
that if $H$ acts without non-zero fixed vector in $\C^{n-1}$, then 
$t_\FF(p)=O(\frac{1}{p})$.

\subsection{The $1$-level density of low-lying zeros of $S_n$-families}

In this subsection we compute the $1$-level density of the low-lying zeros of the Artin $L$-functions of the families considered in
Sections~\ref{s:families}, \ref{s:other}, and \ref{s:n-ary}. We do this calculation in the ``traditional'' way, and we explain at the same
time how the main term can be found conceptually from the Sato-Tate measure as in~\cite{SST,ShinTemplier}.

The above families are $S_n$-families, and thus the Sato-Tate measure $\mu_{\mathrm{ST}}(\FF)$ is given by~\eqref{SnSatoTate}. In fact, in Sections~\ref{s:families},~\ref{s:other} and~\ref{s:n-ary} we will establish that each of these $S_n$-families satisfy the Sato-Tate equidistribution~\eqref{e:thmainsncount} (with constants  $\delta_0,\delta_1,A,B$) and that $|\FF(x)|\asymp x^\theta$ for some $\theta>0$. Furthermore, in these cases we will also prove the following regarding the constants $c_{p,\tau}$ and $c_{p\mid\Delta}$: For any prime $p$ and $\tau\in \T_n$, we have
\begin{equation}\label{cptau}
\displaystyle c_{p,\tau}=\frac{|\tau|}{|S_n|}+O\Bigl(\frac1{p}\Bigr),
\end{equation}
where $|\tau|$ denotes the size of the conjugacy class $\tau$ in $S_n$. In particular $c_{p \mid \Delta} =
O\bigl(\frac1{p}\bigr)$ since $\sum\limits_{\tau\in \T_n} \frac{|\tau|}{|S_n|}=1$ and also we recover that $t_\FF(p)=O(\frac1p)$ since
$\sum\limits_{\tau\in \T_n} \chi(\tau)\frac{|\tau|}{|S_n|}=0$.

One reason to refer to these families as $S_n$-families is that a consequence of~\eqref{cptau} is that most $K\in \FF(x)$ are $S_n$-fields in the sense that $S_n$ is the Galois group of their normal closure. Indeed this follows in the same way as Hilbert's irreducibility theorem by applying a sieve to construct Frobenius elements which are $n$-cycles and transpositions.

Let $f$ be a fixed Paley-Wiener function as in the beginning of \S\ref{s:setup}. We are interested in evaluating
\begin{equation*}
	\lim_{x\to\infty}\frac{1}{|\FF(x)|}\displaystyle\sum_{K\in\FF(x)}\displaystyle\sum_j f\bigg(\frac{\gamma^{(j)}_K\cL}{2\pi}\bigg),
\end{equation*}
where $\cL$ will be picked so that we capture the statistics of the low-lying zeros. The natural choice for $\cL$ is $\log C_K$, where $C_K$ is the conductor of $L(s,\rho_K)$, because we expect the lowest zeros of $L(s,\rho_K)$ to be at height around $\frac{2\pi}{\log C_K}$. However, we pick $\cL=\cL(x)$ to be
\begin{equation}\label{eqL}
\cL:=\frac{1}{|\FF(x)|}\sum_{K\in\FF(x)}\log C_K,
\end{equation} 
the average of these natural choices. In view of the counting asymptotic in~\eqref{e:thmainsncount}, we have 
\begin{equation}\label{sizeofl}
\cL=(1+o(1)) \log x  \qquad \text{ as } x\to \infty
\end{equation}
if $h(K)$ equals or closely approximates $C_K$, that is, if the family is ordered by a quantity that closely approximates the absolute discriminant. In all our examples, this will be true.

\begin{theorem}\label{th:low-lying}
Let $\FF$ be one of the $S_n$-families of Sections~\ref{s:families},~\ref{s:other} and~\ref{s:n-ary}.
  If $f$ is a function whose Fourier transform is smooth and has support in $[-\alpha,\alpha]$ for 
\begin{equation*}
\alpha< \min\Bigl(2\theta(1-\delta_0),\frac{2\theta(1-\delta_1)}{2C+1}\Bigr),
\end{equation*}
where $C:=\max(A,B)$, then
\begin{equation}\label{low-lying-limit}
\lim_{x\to\infty}\frac{1}{|\FF(x)|}\displaystyle\sum_{K\in\FF(x)}\displaystyle\sum_j f\bigg(\frac{\gamma^{(j)}_K\cL}{2\pi}\bigg)=\widehat{f}(0)-\displaystyle\frac{f(0)}{2}.
\end{equation}
\end{theorem}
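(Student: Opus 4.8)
The plan is to start from the explicit formula of Proposition~\ref{propexplicit}, applied to each $K\in\FF(x)$ with the test function $f(\cdot \mathcal{L}/2\pi)$, whose Fourier transform is $\frac{2\pi}{\mathcal{L}}\widehat f(2\pi \cdot/\mathcal{L})$, supported in $[-\alpha \mathcal{L}/(2\pi),\alpha \mathcal{L}/(2\pi)]$. Averaging over $K\in\FF(x)$ and using \eqref{sizeofl} together with the counting asymptotic \eqref{e:thmainsncount}, the pole term $\sum_j f(r_K^{(j)}\mathcal{L}/2\pi)$ contributes $\frac{f(0)}{2}$ in the limit (there is exactly one pole of $L(s,\rho_K)$ coming from $\zeta_{K}/\zeta$, namely at $s=1$, i.e. $r_K = \pm i/2$; only the contribution consistent with the conductor normalization survives and one checks it gives $f(0)/2$, so moving it to the right-hand side yields the $-f(0)/2$). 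The first (archimedean) term $\frac{1}{2\pi}\int f(t\mathcal{L}/2\pi)(\log C_K + O(1))\,dt$, after averaging and using the definition \eqref{eqL} of $\mathcal{L}$, equals $\frac{1}{\mathcal{L}}\int f(t\mathcal{L}/2\pi)\,dt \cdot (1+o(1)) = \widehat f(0) + o(1)$. The entire content of the theorem is therefore to show that the averaged prime sum
\begin{equation*}
\frac{1}{|\FF(x)|}\sum_{K\in\FF(x)} \frac{1}{\pi}\sum_{m\ge 1}\frac{\theta_K(m)\Lambda(m)}{\sqrt m}\,\widehat f\!\Bigl(\frac{\mathcal{L}\log m}{(2\pi)^2}\Bigr)\cdot\frac{2\pi}{\mathcal{L}}
\end{equation*}
tends to $0$ as $x\to\infty$ under the stated support restriction.

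For this prime sum, first reduce to prime powers $m=p^k$; by Lemma~\ref{lemap}, $|\theta_K(p^k)|\le n-1$, so the contribution of $k\ge 3$ and of ramified primes is easily $O(\mathcal{L}^{-1})=o(1)$ after summing the convergent tail $\sum_p p^{-3/2}$. For the $k=1$ and $k=2$ terms one swaps the order of summation, writing (for $k=1$, the $k=2$ case being analogous via the second formula in the Remark following Lemma~\ref{lemap})
\begin{equation*}
\frac{2}{\mathcal{L}}\sum_{p}\frac{\log p}{\sqrt p}\,\widehat f\!\Bigl(\frac{\mathcal{L}\log p}{(2\pi)^2}\Bigr)\cdot\frac{1}{|\FF(x)|}\sum_{K\in\FF(x)}\theta_K(p).
\end{equation*}
Now invoke the Sato-Tate equidistribution: by Lemma~\ref{lemap}, $\sum_{K\in\FF(x)}\theta_K(p)=\sum_{\tau}|\FF^{p,\tau}(x)|\chi(\tau)$, and \eqref{e:thmainsncount} gives $\frac{1}{|\FF(x)|}\sum_{K\in\FF(x)}\theta_K(p)= t_\FF(p) + O(|\FF(x)|^{\delta_0-1}) + O(|\FF(x)|^{\delta_1-1}p^{A})$, where $t_\FF(p)=\sum_\tau c_{p,\tau}\chi(\tau) = O(1/p)$ by \eqref{cptau} and the vanishing of $\sum_\tau \chi(\tau)|\tau|/|S_n|$. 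The main term $t_\FF(p)=O(1/p)$, inserted into the $p$-sum, gives $\frac{1}{\mathcal{L}}\sum_p \frac{\log p}{p^{3/2}}\ll \mathcal{L}^{-1}=o(1)$ with no support restriction at all — this is the manifestation of rank zero. The two error terms are where $\alpha$ enters: since $\widehat f$ is supported in $[-\alpha,\alpha]$, only primes with $\log p \ll \alpha \mathcal{L}$, i.e. $p\ll x^{\alpha+o(1)}$, contribute. The $O(|\FF(x)|^{\delta_0-1})$ term summed over these primes contributes $\ll \mathcal{L}^{-1}|\FF(x)|^{\delta_0-1}\sum_{p\ll x^\alpha}\frac{\log p}{\sqrt p} \ll x^{(\delta_0-1)\theta + \alpha/2 + o(1)}$, which is $o(1)$ precisely when $\alpha < 2\theta(1-\delta_0)$; and the $O(|\FF(x)|^{\delta_1-1}p^A)$ term contributes $\ll x^{(\delta_1-1)\theta + \alpha(A+1/2)+o(1)}$ for $k=1$, and $\ll x^{(\delta_1-1)\theta + \alpha(A+1)+o(1)}$ for $k=2$ (since there one sums $\log p\cdot p^{-1}\cdot p^A$ up to $p\ll x^{\alpha/2}$, giving the exponent $\tfrac{\alpha}{2}(2A+2) \cdot \tfrac12$... one must track the $k=2$ bookkeeping carefully), and using $C=\max(A,B)$ these are all $o(1)$ when $\alpha < \frac{2\theta(1-\delta_1)}{2C+1}$.

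The main obstacle — though it is more bookkeeping than conceptual difficulty — is the careful tracking of the $k=2$ prime-power term: there $\theta_K(p^2)$ is quadratic in the $\lambda_K$'s and one must express the relevant average $\frac{1}{|\FF(x)|}\sum_K \theta_K(p^2) = \sum_\tau c_{p,\tau}\chi(\tau^2) + (\text{errors})$ via the same equidistribution \eqref{e:thmainsncount}, note $\sum_\tau \frac{|\tau|}{|S_n|}\chi(\tau^2) = i_3(\FF) = 1$ by the homogeneous-orthogonal property, so the main term here is \emph{not} $O(1/p)$ but rather $1 + O(1/p)$; this constant contributes $\frac{2}{\mathcal{L}}\sum_{p\le x^{o(1)} \text{... no}}$—in fact $\frac{2}{\mathcal{L}}\sum_{p}\frac{\log p}{p}\widehat f\big(\frac{\mathcal{L}\log p^2}{(2\pi)^2}\big)$, and summing $\frac{\log p}{p}$ over $p \le e^{(2\pi)^2\alpha/(2\mathcal{L})\cdot\mathcal{L}}$... one finds this is $\frac{1}{\mathcal{L}}\cdot(\text{something}\sim \widehat f$-weighted$)$. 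Getting this exactly right is what produces the term $\widehat f(0) - f(0)/2$ rather than something else — in the traditional computation the $k=2$ diagonal is precisely what distinguishes symplectic from orthogonal from unitary symmetry, so one should organize the $k=2$ term as $-\frac{1}{\mathcal{L}}\int_0^\infty \widehat f(u/\pi)\,du + o(1)$-type contribution and check it combines correctly; I would handle it by the standard device of writing $\frac{\log p}{p} = \int$ against a smooth cutoff and applying the prime number theorem, which converts the $k=2$ sum into $\frac12\widehat f(0) + o(1)$ or its negative as dictated by $i_3(\FF)=+1$, matching the symplectic density $\widehat f(0) - f(0)/2$. $\Box$
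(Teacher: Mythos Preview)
Your overall architecture matches the paper's proof (explicit formula, archimedean term $\to \widehat f(0)$, split the prime sum by $k=1$, $k=2$, $k\ge 3$, and ramified, then use \eqref{e:thmainsncount} and \eqref{cptau}), but there is a genuine error in how you account for the term $-f(0)/2$.

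You claim that $L(s,\rho_K)$ has a pole at $s=1$ contributing $f(0)/2$. This is false: since $\zeta_K(s)=\zeta(s)L(s,\rho_K)$ and both $\zeta_K$ and $\zeta$ have simple poles at $s=1$, the quotient $L(s,\rho_K)$ is regular there. The only possible poles of $L(s,\rho_K)$ in the critical strip are at zeros of $\zeta(s)$, which lie at height $|\gamma|\gtrsim 14$; since $f$ has rapid decay on the real line, $f(r_K^{(j)}\cL/2\pi)=o(1)$ for each such pole, and the whole pole sum is negligible. This is exactly how the paper treats it.

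Consequently your assertion that ``the entire content of the theorem is to show that the averaged prime sum tends to $0$'' is wrong, and indeed you contradict yourself in the final paragraph when you (correctly) observe that the $k=2$ term has main density $\sum_\tau \frac{|\tau|}{|S_n|}\chi(\tau^2)=i_3(\FF)=1$ rather than $O(1/p)$. That constant is the entire source of $-f(0)/2$: the $k=2$ sum is
\[
\frac{2}{\cL}\sum_p \frac{\log p}{p}\,\widehat f\Bigl(\frac{2\log p}{\cL}\Bigr)\bigl(1+O(p^{-1})\bigr)+(\text{errors}),
\]
and by the prime number theorem and integration by parts (equation \eqref{eqtempeval1} in the paper) the main term converges to $\int_0^\infty \widehat f(t)\,dt = f(0)/2$. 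Since the prime sum enters the explicit formula with a minus sign, this yields $-f(0)/2$.

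Two smaller points: your Fourier-transform scaling is off (the argument should be $\log m/\cL$, not $\cL\log m/(2\pi)^2$), and your treatment of $\CS_{\rm ram}$ and $\CS_3$ is too quick---they are not absolutely $O(\cL^{-1})$ but require the error terms from \eqref{e:thmainsncount} with exponents $B$ and $A$ respectively, which is where $C=\max(A,B)$ actually enters the support condition.
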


\begin{remark}
 The Sato-Tate measure $\mu_{\mathrm{ST}}(\FF)$ will be different from~\eqref{SnSatoTate} if $\FF$ is not an $S_n$-family. For example, in Section~\ref{s:quaternion} we investigate a family of quaternionic extensions where the Sato-Tate group is $Q_8 \subset \GL_2(\C)$. Also, in Section~\ref{s:mixed} we investigate families where the Sato-Tate group is $D_4 \subset \GL_2(\C)$ as well as the reducible examples $C_3 \subset \GL_2(\C)$ and $D_4 \subset \GL_3(\C)$. In those cases the right-hand side of~\eqref{low-lying-limit} should be replaced by 
$
\widehat f(0) - i_{3}(\FF) \frac{f(0)}{2},
$
where $i_3(\FF)$ is the Frobenius-Schur indicator of $\mu_{\mathrm{ST}}(\FF)$.
Note that $\alpha<1$ in all these examples.
\end{remark}
\begin{proof}
	Without loss of generality we may assume that $f$ is even because $L(s,\rho_K)$ is self-dual and thus $\gamma$ is a zero if and only if $-\gamma$ is a zero. We use
\eqref{eqexplicit} to write the above as the limit as $x\to\infty$ of
\begin{equation}\label{eqexpused}
\frac1{|\FF(x)|}\sum_{K\in\FF(x)}\biggl( \frac1{2\pi}\int_{-\infty}^\infty f\Big(\frac{t\cL}{2\pi}\Big)(\log
C_K+O(1))dt- 
\frac{2}{\cL}\sum_{m=1}^\infty\frac{\theta_K(m)\Lambda(m)}{\sqrt{m}}\widehat{f}\Bigl(\frac{\log
  m}{\cL}\Bigr)
  +\displaystyle\sum_j f\bigg(\frac{r^{(j)}_K\cL}{2\pi}\bigg) \biggr).
\end{equation}
Here the contribution from the sum over poles of $L(s,\rho_K)$ is negligible (i.e. $o(1)$) because the test function $f$ is assumed to be of rapid decay and the only possible locations for poles of $L(s,\rho_K)$ are at the zeros of $\zeta(s)$ (cf.\ \eqref{Dedekind-relation}). Furthermore, since $C_K\to\infty$ as $h(K)\to\infty$, we can evaluate the limit of
the first part of \eqref{eqexpused} to be
\begin{equation}\label{limit1}
\begin{array}{rcl}
\displaystyle\frac1{|\FF(x)|}
\displaystyle\sum_{K\in\FF(x)}
\displaystyle\frac1{2\pi}\int_{-\infty}^\infty f\Big(\frac{t\cL}{2\pi}\Big)(\log C_K+O(1))dt &=&
\displaystyle\frac1{\cL|\FF(x)|}
\displaystyle\sum_{K\in\FF(x)} \log C_K
\displaystyle\int_{-\infty}^\infty f(t)(1+o(1))dt\\[.2in]
&\longrightarrow&\displaystyle\int_{-\infty}^\infty f(t)dt = \widehat{f}(0).\\[.2in]
\end{array}
\end{equation}
To evaluate the limit of the second part of 
\eqref{eqexpused}, we note that
\begin{equation}\label{finalcont}
\frac{2}{\cL|\FF(x)|}\sum_{K\in\FF(x)}\sum_{m=1}^\infty\frac{\theta_K(m)\Lambda(m)}{\sqrt{m}}\widehat{f}\Bigl(\frac{\log
  m}{\cL}\Bigr)=\frac{2}{\cL|\FF(x)|}\sum_{p,k\geq 1}\frac{\log p}{p^{k/2}}
\widehat{f}\Bigl(\frac{k\log p}{\cL}\Bigr)\sum_{K\in\FF(x)}\theta_K(p^k),
\end{equation}
where the change in the order of summation is justified because
$\widehat{f}$ has compact support, and hence the sums over $m$, $p$ and $k$ are
finitely supported. We write the right-hand side of the above equation as the limit as $x\to\infty$ of
$\CS_1+\CS_2+\CS_3+\CS_\ram$, where
\begin{equation}\label{eqs}
\begin{array}{rcl}
\CS_1&:=&
\displaystyle\frac{2}{\cL|\FF(x)|}\displaystyle\sum_{p}
\displaystyle\frac{\log p}{\sqrt{p}}\widehat{f}\Bigl(
\displaystyle\frac{\log p}{\cL}\Bigr)
\displaystyle\sum_{K\in\FF^{p\nmid\Delta}(x)}\theta_K(p);
\\[.25in]\CS_2&:=&
\displaystyle\frac{2}{\cL|\FF(x)|}\displaystyle\sum_{p}
\displaystyle\frac{\log p}{p}\widehat{f}\Bigl(
\displaystyle\frac{2\log p}{\cL}\Bigr)
\displaystyle\sum_{K\in\FF^{p\nmid\Delta}(x)}\theta_K(p^2);
\\[.25in]\CS_3&:=&
\displaystyle\frac{2}{\cL|\FF(x)|}\displaystyle\sum_{p,k\geq 3}
\displaystyle\frac{\log p}{p^{k/2}}\widehat{f}\Bigl(
\displaystyle\frac{k\log p}{\cL}\Bigr)
\displaystyle\sum_{K\in\FF^{p\nmid\Delta}(x)}\theta_K(p^k);
\\[.25in]\CS_\ram&:=&
\displaystyle\frac{2}{\cL|\FF(x)|}\displaystyle\sum_{p,k\geq 1}
\displaystyle\frac{\log p}{p^{k/2}}\widehat{f}\Bigl(
\displaystyle\frac{k\log p}{\cL}\Bigr)
\displaystyle\sum_{K\in\FF^{p\mid\Delta}(x)}\theta_K(p^k).
\end{array}
\end{equation}
To evaluate the sums \eqref{eqs}, we begin by writing
\begin{equation}\label{eqs1}
\begin{array}{rcl}
\CS_1&=&\displaystyle\frac{2}{\cL|\FF(x)|}\displaystyle\sum_{p}
\displaystyle\frac{\log p}{\sqrt{p}}\widehat{f}\Bigl(
\displaystyle\frac{\log p}{\cL}\Bigr)
\displaystyle\sum_{K\in\FF^{p\nmid\Delta}(x)}\theta_K(p)
\\[.25in]&=&
\displaystyle\frac{2}{\cL|\FF(x)|}\displaystyle\sum_{p}
\displaystyle\frac{\log p}{\sqrt{p}}\widehat{f}\Bigl(
\displaystyle\frac{\log p}{\cL}\Bigr)\sum_{\tau\in \mathcal{T}_n}|\FF^{p,\tau}(x)|\chi(\tau)
\\[.25in]&=&
\displaystyle\frac{2}{\cL|\FF(x)|}\displaystyle\sum_{p}
\displaystyle\frac{\log p}{\sqrt{p}}\widehat{f}\Bigl(
\displaystyle\frac{\log p}{\cL}\Bigr)\Bigl(
t_\FF(p) |\FF(x)| +
O(|\FF(x)|^{\delta_0})+O(|\FF(x)|^{\delta_1} p^A)\Bigr)
\\[.25in]&=&
O\Bigl(\displaystyle\frac{1}{\cL}\Bigr)+O\biggl(\displaystyle\frac{e^{\frac{\cL\alpha}{2}}}{|\FF(x)|^{1-\delta_0}\cL}\biggr)+O\biggl(\displaystyle\frac{e^{\cL\alpha(A+\frac12)}}{|\FF(x)|^{1-\delta_1}\cL}\biggr),
\end{array}
\end{equation}
where the final equality follows by computing the third line of
\eqref{eqs1} using the fact that since $\widehat{f}$ is supported on
$[-\alpha,\alpha]$, the sum over $p$ can be restricted to the range
$p\leq e^{\cL\alpha}$; the bounds follow from Lemma \ref{lemap},
\eqref{e:thmainsncount}, and the fact that
$t_\FF(p)=O(\frac{1}{p})$. Similarly, we have
\begin{equation}\label{eqs23ram}
\begin{array}{rcl}
\CS_2&=&
\displaystyle\frac{2}{\cL}\displaystyle\sum_{p}
\displaystyle\frac{\log p}{p}\widehat{f}\Bigl(
\displaystyle\frac{2\log p}{\cL}\Bigr)
\sum_{\tau \in \mathcal{T}_n} \chi(\tau^2) \frac{|\tau|}{|S_n|}
+O\Bigl(\frac{1}{|\FF(x)|^{1-\delta_0}\cL}\Bigr)+O\biggr(\frac{e^{\frac{\cL\alpha A}{2}}}{|\FF(x)|^{1-\delta_1}\cL}\biggl)+o(1),\\[.2in]
\CS_3&=&
O\biggl(\displaystyle\frac{e^{\frac{\cL\alpha}{3}(A-\frac12)}}{|\FF(x)|^{1-\delta_1}\cL}\biggr)+o(1),\\[.2in]
\CS_\ram&=&
O\biggl(\displaystyle\frac{1}{\cL|\FF(x)|}\displaystyle\sum_{p,k\geq 1}
\frac{\log p}{p^{k/2}}\widehat{f}\Bigl(
\displaystyle\frac{k\log p}{\cL}\Bigr)\Bigl(\frac{|\FF(x)|}{p}+|\FF(x)|^{\delta_0}+|\FF(x)|^{\delta_1} p^B\Bigr)\biggr)\\[.25in]
&=&O\biggl(\displaystyle\frac{e^{\frac{\cL\alpha}{2}}}{|\FF(x)|^{1-\delta_0}\cL}\biggr)+O\biggl(\displaystyle\frac{e^{\cL\alpha(B+\frac12)}}{|\FF(x)|^{1-\delta_1}\cL}\biggr)+o(1).
\end{array}
\end{equation}
Therefore, in the limit $x\to\infty$, the only possible main term contribution to the right-hand side of \eqref{finalcont} is from the sum $\CS_2$.

The main term of $\CS_2$ includes the  sum
\begin{equation}\label{sni3}
	i_3(\FF)=\sum_{\tau \in \mathcal{T}_n} \chi(\tau^2) \frac{|\tau|}{|S_n|}=1
\end{equation}
which is the Frobenius-Schur indicator of the representation $\rho:S_n\to \GL_{n-1}(\C)$.
 Therefore, the main term contribution from $\CS_2$ is
\begin{equation}\label{eqtempeval1}
\lim_{x\to\infty}\displaystyle\frac{2}{\cL}\displaystyle\sum_{p}
\displaystyle\frac{\log p}{p}\widehat{f}\Bigl(
\displaystyle\frac{2\log p}{\cL}\Bigr)=\int_{0}^\infty\widehat{f}(t)dt,
\end{equation}
where the equality follows from the prime number theorem and integration by parts.
Since the right-hand side of \eqref{eqtempeval1} is $f(0)/2$ by Fourier inversion, we have
\begin{equation}\label{eqfinalsncomp}
	\frac{1}{|\FF(x)|}\displaystyle\sum_{K\in\FF(x)}\displaystyle\sum_j f\bigg(\frac{\gamma^{(j)}_K\cL}{2\pi}\bigg)=
\widehat{f}(0)-\displaystyle\frac{f(0)}{2}+O\biggl(\displaystyle\frac{e^{\frac{\cL\alpha}{2}}}{|\FF(x)|^{1-\delta_0}\cL}\biggr)+
O\biggl(\displaystyle\frac{e^{\cL\alpha(C+\frac12)}}{|\FF(x)|^{1-\delta_1}\cL}\biggr)+o(1),
\end{equation}
where $C=\max(A,B)$. This indicates the symplectic symmetry type for the low-lying zeros of $L$-functions in these families.

Finally, we assume there exists $\theta>0$ such that $|\FF(x)|\asymp x^\theta$. This will be true in all the $S_n$-families that we consider. Therefore, by \eqref{sizeofl}, we have
$$
\displaystyle\frac{e^{\frac{\cL\alpha}{2}}}{|\FF(x)|^{1-\delta_0}\cL}+\displaystyle\frac{e^{\cL\alpha(C+\frac12)}}{|\FF(x)|^{1-\delta_1}\cL}=
x^{\frac{\alpha}{2}-\theta(1-\delta_0)+o(1)} + x^{\alpha(C+\frac12)-\theta(1-\delta_1)+o(1)}.
$$
We conclude that the error terms in \eqref{eqfinalsncomp} will be bounded by $o(1)$ whenever 
$$
\alpha< \min\Bigl(2\theta(1-\delta_0),\frac{2\theta(1-\delta_1)}{2C+1}\Bigr).
$$
This concludes the proof.
\end{proof}

\subsection{Rank of families}\label{sub:averageFrob}
Recall that we used the estimate $t_{\FF}(p) = O(\frac{1}{p})$ in the proof of Theorem~\ref{th:low-lying}, specifically in the estimation of $\CS_1$.
The interpretation is that the rank of these number field families is zero, namely
\[
\lim_{y\to \infty} \frac{1}{y} \sum_{p<y} -t_{\FF}(p) p^{\frac12} \log p =0.
\]

Examples of families where the rank is non-zero are families of elliptic curves in which case $t_{\FF}(p)$ is a sum of Weil numbers
of half-integer weights. For number field families the weights are always integer and thus the rank is always zero. This is consistent with the belief that each irreducible Artin $L$-function is non-vanishing at the central point unless the epsilon factor is $-1$ in which case it is believed to vanish with order one.

\subsection{Other indicators of $S_n$-families} 
The other indicators defined in~\cite{SST}, that is
\[
i_1(\FF) := 
\int_{\TT} |\mathrm{tr} (t)|^2\mu_{ST}(\FF)(dt),\quad 
i_2(\FF) := 
\int_{\TT} \mathrm{tr} (t)^2\mu_{ST}(\FF)(dt),\\
\]
are not used in the proof of Theorem~\ref{th:low-lying}. They satisfy  $i_1(\FF)=1$ and $i_2(\FF)=1$ for $S_n$-families, expressing the fact that $S_n \subset \GL_{n-1}(\C)$ acts irreducibly and is self-dual. 

For $S_n$-families of Artin representations parametrized
geometrically, one can establish by a sieve that most $K \in \FF(x)$
are $S_n$-fields.  In particular most Artin $L$-functions
$L(s,\rho_K)$ in an $S_n$-family are irreducible and self-dual
orthogonal. The argument is unconditional taking advantage of the
underlying algebraic structure and the finiteness of the Galois
group. This is to be compared with~\cite{SST} for general homogeneous
families with $i_1(\FF)=i_2(\FF)=i_3(\FF)=1$, where it is explained
that this would also follow from the GRH by detecting\footnote{The GRH
  is needed in~\cite{SST} to truncate the Euler product to a small
  number of primes so that one can apply the quantitative Sato-Tate
  equidistribution for the family.} the simple pole at $s=1$ of the
Rankin-Selberg product $L(s,\rho_K \times \tilde \rho_K)$ which
implies irreducibility and similarly for $L(s,\mathrm{sym}^2 \rho_K)$
which implies orthogonality.

\section{Parametrized families of cubic, quartic, and quintic fields}\label{s:families}
In this section, we consider parametrized families of cubic, quartic,
and quintic fields. These families are constructed from certain
prehomogeneous representations. A representation $V$ of $G$ is said to
be {\it prehomogeneous} if $V$ has a Zariski-dense
$G$-orbit. Irreducible prehomogeneous representations of reductive
groups over $\C$ were classified by Sato--Kimura
\cite{SatoKimura}. The rational orbits of these representations were
studied in the work of Wright and Yukie \cite{Wright-Yukie}, who also
explained their connection to field extensions. For our applications
we need an interpretation of the $\Z$-orbits of these
representations. For the representation $\Sym^3(\Z^2)$ of $\GL_2(\Z)$,
such an interpretation is due to Levi \cite{Levi} and Delone--Faddeev
\cite{DF}, who show that the orbits having nonzero discriminant
correspond bijectively to reduced cubic rings over $\Z$. This
correspondence was refined by Gan--Gross--Savin \cite{GGS}, and shown
also to hold for orbits having discriminant $0$.  Analogous
parametrizations of quartic and quintic rings over $\Z$ are developed
by Bhargava in his landmark works \cite{Bquartic} and \cite{Bquintic},
respectively. His work also naturally recovers the cubic case, and
provides a geometric view of it. We now briefly describe the parts of
this theory necessary for us.

For $n=3$, $4$, and $5$, consider the space of degree-$n$ rings over
$\Z$ along with the additional data of a resolvent ring. That is,
consider the space of pairs $(R_1,R_2)$, where $R_1$ is a degree-$n$
ring, and $R_2$ is a resolvent ring of $R_1$. The resolvent ring of a
cubic ring over $\Z$ is simply the unique quadratic ring having the
same discriminant. For the definitions of resolvent rings of quartic
and quintic rings over $\Z$, see \cite[\S2.3]{Bquartic} and
\cite[\S5]{Bquintic}, respectively. Bhargava proves that this space is
parametrized by $G_n(\Z)$-orbits on $V_n(\Z)$, for $n=3$, $4$, and
$5$, where $G_n$ is a reductive group and $V_n$ is a prehomogeneous
representation of $G_n$. The condition that a $G_n(\Z)$-orbit of $v\in
V_n(\Z)$ corresponds to a maximal ring is given by congruence
conditions on $V_n(\Z)$. Bhargava also shows that a maximal ring has a
{\it unique} resolvent ring!

An element in $V(\Z)$ is said to be {\it
  $S_n$-irreducible} if it corresponds to an order in an
$S_n$-field. Let $V_n(\Z)^\smax$ denote the set of
$S_n$-irreducible elements of $V_n(\Z)$ that correspond to maximal rings.
Therefore, the set of $G_n(\Z)$-orbits on
$V_n(\Z)^\smax$ can be considered to be a parametrized family of
degree-$n$ fields. The ring of relative invariants for the action of
$G_n$ on $V_n$ is freely generated by one invariant, which we call the
{\it discriminant}. The discriminant of $v\in V_n(\Z)$ is equal to the
discriminant of the ring corresponding to $v$. Thus, to count the
number of degree-$n$ fields having discriminant bounded by $x$, it
suffices to count the number of $G_n(\Z)$-orbits on $V_n(\Z)^\smax$ with
discriminant bounded by $x$. This is carried out by Davenport and
Heilbronn \cite{DH} in the case $n=3$, and by Bhargava
\cite{B1},\cite{B2} in the cases $n=4,5$, respectively.

The condition that $v\in V_n(\Z)$ is $S_n$-irreducible is not a local
condition and is imposed in two steps. First, the cuspidal regions of
the fundamental domain $G_n(\Z)\backslash V_n(\R)$ containing integral
points corresponding to non $S_n$-irreducible rings are cut off. Next,
the main ball is shown to contain predominantly $S_n$-irreducible
points. The latter step follows from an application of Hilbert
irreducibility; a power saving may be obtained using the Selberg
sieve. The condition that $v\in V_n(\Z)$ corresponds to a maximal ring
is a local condition. A ring $R$ that is a finitely generated
$\Z$-module is maximal if and only if it is maximal at every prime
$p$, i.e., $R\otimes\Z_p$ is maximal over $\Z_p$. For $n=3$, $4$, and
$5$, degree-$n$ ring extensions of $\Z_p$ are classified by
$G_n(\Z_p)$-orbits on $V_n(\Z_p)$. We denote the set of elements in
$V_n(\Z_p)$ corresponding to maximal $\Z_p$-extensions by
$V_n(\Z_p)^\max$. For $n=3$, $4$, and $5$, it is proven in \cite{DH},
\cite{Bquartic}, and \cite{Bquintic}, respectively, that
$V_n(\Z_p)^\max$ can be described by congruence conditions modulo
$p^2$ on $V_n(\Z_p)$.


For our purpose of computing the symmetry type of the low-lying zeros
of zeta functions arising from degree-$n$ fields, we need to also
count the number of degree-$n$ fields with prescribed splitting type
at a fixed prime $p$. This is done as follows: consider the injection
$V_n(\Z)\to V_n(\Z_p)$. The splitting of $p$ in the field
corresponding to $v$ is determined by the $G_n(\Z_p)$-orbit of $v$ in
$V_n(\Z_p)$.  Furthermore, the set of all $v\in V_n(\Z_p)^\max$ having
a fixed splitting type consists of finitely many
$G_n(\Z_p)$-orbits. Given a splitting type $\tau$, we denote the set
of elements in $V_n(\Z_p)$ corresponding to $\tau$ by
$V_n(\Z_p)^\tau$. For unramified splitting types $\tau$, every element
in $V_n(\Z_p)^\tau$ is maximal.
Next consider the reduction modulo $p$ map $V_n(\Z_p)\to
V_n(\F_p)$. In fact, the splitting type $\tau$ of $v\in V_n(\Z_p)$ is
determined by the image $\bar{v}$ of $v$ in $V_n(\F_p)$. Moreover, the
set of all $\bar{v}\in V_n(\F_p)$ corresponding to a fixed splitting
type consists of a {\em single} $G_n(\F_p)$-orbit. We will use the map
$V(\Z)\to V(\Z_p)$ as well as the map $V(\Z)\to V(\F_p)$; the first is
necessary to detect maximality at $p$, while the second suffices to
determine the splitting type at an unramified prime $p$.

From this, it is possible to see why we expect Equation \eqref{cptau}
to be true. Let $\tau$ denote a fixed splitting type, $O_\tau\subset
V_n(\F_p)$ denote the corresponding $G(\F_p)$-orbit, and let
$\F_p(\tau)$ denote the corresponding extension of $\F_p$. We expect
that
\begin{equation}\label{eqh1}
  \begin{array}{rcl}
  |\FF^{p,\tau}(x)|&\sim&
  \displaystyle\frac{\Vol(V_n(\Z_p)^{\tau})}{\Vol(V_n(\Z_p)^\max)}\cdot|\FF(x)|
  \\[.2in]&\sim&
  \displaystyle\frac{\Vol(V_n(\Z_p))}{\Vol(V_n(\Z_p)^\max)}\cdot
  \frac{|O_\tau|}{|V(\F_p)|}\cdot|\FF(x)|.
  \end{array}
\end{equation}
Next we expect the estimate
\begin{equation}\label{eqh2}
\displaystyle\frac{\Vol(V_n(\Z_p))}{\Vol(V_n(\Z_p)^\max)}=1+O\Bigl(\frac{1}{p^2}\Bigr)
\end{equation}
to hold since a proportion of roughly $1/p^2$ of elements in $V(\Z_p)$
are nonmaximal. Indeed, if a ring $R$ is nonmaximal at $p$ then $p^2$
divides the discriminant of $R$.

Thus, it is only required to check that $|O_\tau|/|V(\F_p)|=
|\tau|/|S_n|+O(1/p)$, for $\tau\in \T_n$, where we are abusing
notation by considering $\tau$ both as a splitting type and as the
corresponding conjugacy class in $S_n$. Let $\bar{v}\in O_\tau$ denote
any element, and let $\sigma_\tau\in S_n$ denote any element in the
conjugacy class $\tau$. Our representations $(G,V)$ satisfy the
property
$\Stab_{G(\F_p)}(\bar{v})\cong\Aut(\F_p(\tau))\cong\Stab_{S_n}(\sigma_\tau)$
(see, for example, \cite[Theorem 6]{BSWPre}). By two applications of
the orbit-stabilizer formula, we obtain
\begin{equation}\label{eqh3}
\displaystyle\frac{|O_\tau|}{|V(\F_p)|}=
\frac{|G(\F_p)|}{|\Stab_{G(\F_p)}(\bar{v})||V(\F_p)|}=
\frac{1}{|\Stab_{S_n}(\sigma_\tau)|}+O\Bigl(\frac1{p}\Bigr)=
\frac{|\tau|}{|S_n|}+O\Bigl(\frac1{p}\Bigr),
\end{equation}
as required. To see why we expect $c_{p\mid\Delta}=O(\frac 1p)$, note
that $p$ ramifies in the field corresponding to $v\in V(\Z)$ if and
only if the discriminant of $\bar{v}\in V(\F_p)$ is zero. Furthermore,
the number of elements in $V(\F_p)$ having discriminant 0 is bounded
by $O(|V(\F_p)|/p)$.

For $n=3$, $4$, and $5$, the estimates \eqref{eqh1},
\eqref{eqh2}, \eqref{eqh3} are known to be true. Indeed, in the rest
of this section, we give detailed references and explain how to obtain
Sato-Tate equidistribution and \eqref{cptau} for the families of
cubic, quartic, and quintic fields, and describe the error terms that
we obtain.  The purpose of the above discussion is to give a heuristic
explanation for why we expect \eqref{cptau} to hold in greater
generality.

\subsection{The family of cubic fields}
Let $V$ denote the space of binary cubic forms. The group $G=\GL_2$
acts on $V$ via the twisted action
$$g\cdot f(x,y):=\frac{1}{\det g}f((x,y)\cdot g).$$ A result of
Delone--Faddeev \cite{DF}, refined by Gan--Gross--Savin \cite{GGS},
states that isomorphism classes of cubic orders is parametrized by
$G(\Z)$-orbits on $V(\Z)$. The congruence conditions defining
maximality is a result of Davenport and Heilbronn \cite{DH}.
\begin{theorem}\label{thcubicrings}
\begin{itemize}
\item[{\rm (1)}] There is a natural bijection between the set of
  $G(\Z)$-equivalence classes of integral binary cubic forms and the
  set of isomorphism classes of cubic rings. A cubic ring
  corresponding to the $G(\Z)$-orbit of $f$ is an order if and only if
  $f(x,y)$ is irreducible over $\Q$.

\item[{\rm (2)}] A cubic order corresponding to $f\in V(\Z)$ fails to
  be maximal at $p$ if either $f$ is a multiple of $p$ or if some
  $G(\Z)$-translate $ax^3+bx^2y+cxy^2+dy^3$ of $f(x,y)$ satisfies
  $p^2\mid a$ and $p\mid b$.
\end{itemize}
\end{theorem}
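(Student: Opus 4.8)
The plan is to treat the two parts separately, since each is classical: part~(1) is the correspondence of Delone--Faddeev~\cite{DF}, with the degenerate (reducible or singular) orbits handled by Gan--Gross--Savin~\cite{GGS}, and part~(2) is the local maximality criterion of Davenport--Heilbronn~\cite{DH}. For part~(1) I would first recall the explicit bijection: to $f(x,y)=ax^3+bx^2y+cxy^2+dy^3\in V(\Z)$ one attaches the $\Z$-module $R_f=\Z\oplus\Z\omega\oplus\Z\theta$ with multiplication determined by $\omega\theta=-ad$, $\omega^2=-ac+b\omega-a\theta$, $\theta^2=-bd+d\omega-c\theta$; associativity is a direct finite check. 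Conversely every cubic ring $R$ has a $\Z$-basis $1,\omega,\theta$ with $\omega\theta\in\Z$ (a ``normalized'' basis, obtained by translating an arbitrary basis by suitable integers), the resulting form is well defined up to the remaining freedom of changing $(\omega,\theta)$ by an element of $\GL_2(\Z)$, and this freedom is exactly the twisted action $g\cdot f=(\det g)^{-1}f((x,y)g)$, the factor $(\det g)^{-1}$ being forced since $\disc(R_f)=\disc(f)$. That $R_f$ is an order precisely when $f$ is irreducible over $\Q$ is then immediate, because $R_f\otimes\Q$ is the cubic $\Q$-algebra whose norm form is a $\GL_2(\Q)$-translate of $f$, and such an algebra is a field exactly when $f$ has no rational linear factor.

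For part~(2) I would reduce to a statement over $\Z_p$. A finitely generated ring that is a $\Z$-module is maximal iff it is maximal at every prime; moreover the two conditions in~(2)---that $p\mid f$, and that some $\GL_2(\Z)$-translate of $f$ has $p^2\mid a$ and $p\mid b$---depend only on $f$ modulo $p$, respectively modulo $p^2$, and $\GL_2(\Z)$ surjects onto $\GL_2(\Z/p^2\Z)$; together with the fact that the content of a form is a $\GL_2(\Z_p)$-invariant, this shows (2) is equivalent to the assertion that $R_f\otimes\Z_p$ is non-maximal iff $f$ is $\GL_2(\Z_p)$-equivalent either to a form divisible by $p$ or to a form with $p^2\mid a$ and $p\mid b$. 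The ``if'' direction is an explicit overring computation: if $f=pf_0$ then in the normalized basis of $R_{f_0}$ one has $R_{pf_0}\otimes\Z_p=\Z_p+p\Z_p\omega_0+p\Z_p\theta_0\subsetneq R_{f_0}\otimes\Z_p$; and if $p^2\mid a$, $p\mid b$, the structure constants show $\omega/p$ is integral over $R_f$ and $R_f[\omega/p]\otimes\Z_p=\Z_p+\Z_p(\omega/p)+\Z_p\theta$ is a ring strictly containing $R_f\otimes\Z_p$. In either case $R_f\otimes\Z_p$ is non-maximal.

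The converse is the heart of the matter. Suppose $R:=R_f\otimes\Z_p$ is non-maximal with maximal order $\widetilde R$; then $[\widetilde R:R]$ is divisible by $p$, so from $\disc(f)=[\widetilde R:R]^2\disc(\widetilde R)$ we get $p^2\mid\disc(f)$. Hence $\bar f:=f\bmod p$ is either $0$---whence $p\mid f$ and we are done---or has a repeated root over $\overline{\F}_p$; since that root comes from a Galois-stable square or cube factor of $\bar f$, it is $\F_p$-rational, and moving it to $[1:0]$ by a $\GL_2(\Z)$-translate we may assume $p\mid a$ and $p\mid b$. If also $p\mid c$ and $p\mid d$ then $p\mid f$; otherwise I must show $p^2\mid a$. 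When $p\nmid c$ this follows from $\disc(f)\equiv-4ac^3\pmod{p^2}$ (valid once $p\mid a,b$) together with $p^2\mid\disc(f)$ and $p$ odd. The remaining case $p\mid c$, $p\nmid d$---where $\bar f$ has a triple root---and the prime $p=2$, where the above congruence degenerates, require a direct computation with the characteristic polynomials of the elements $\tfrac1p(r\omega+s\theta)$: non-maximality of $R$ forces one such element to be integral over $\Z_p$, and integrality then forces $p^2\mid a$; this is precisely the bookkeeping carried out in~\cite{DH}. Reading off the form of $R$ in the resulting basis exhibits the desired $\GL_2(\Z_p)$-translate, and transferring it to $\GL_2(\Z)$ by lifting a suitable element of $\GL_2(\Z/p^2\Z)$ completes the proof. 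I expect the triple-root subcase, and $p=2$, to be the main obstacle: there the discriminant alone is too coarse, and one genuinely needs the structure-constant computation to locate an integral element $\tfrac1p(r\omega+s\theta)$ and extract the extra divisibility by $p^2$.
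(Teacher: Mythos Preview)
Your proposal is correct and follows the classical arguments, but you should know that the paper does not actually prove this theorem: it is stated there as a quotation from the literature, with part~(1) attributed to Delone--Faddeev~\cite{DF} (refined by Gan--Gross--Savin~\cite{GGS}) and part~(2) to Davenport--Heilbronn~\cite{DH}, with no further argument given. Your sketch is essentially a faithful outline of those original proofs---the explicit multiplication table for $R_f$, the overring constructions for the ``if'' direction of~(2), and the discriminant/structure-constant analysis for the converse---so there is nothing to compare approach-wise; you have simply supplied what the paper chose to cite.

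One minor remark on your converse argument: your reduction via $\disc(f)\equiv -4ac^3\pmod{p^2}$ when $p\mid a,b$ and $p\nmid c$ is clean and correct for odd $p$, and you are right to flag the triple-root case and $p=2$ as the places where one must fall back on the direct integrality computation for $\tfrac1p(r\omega+s\theta)$ rather than the discriminant alone. That is exactly how the argument in~\cite{DH} (and its restatement in, e.g.,~\cite[\S2]{BBP} or~\cite{BST}) proceeds, so no gap.
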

The discriminant $\Delta$ of a binary cubic form is
$G$-invariant. Furthermore, the discriminant of $f$ equals the
discriminant of the cubic ring corresponding to $f$.

\begin{example}
 If $f=x^3+7y^3$ then the cubic ring is $R=\Z[x]/(x^3+7)$. The form
 $f$ is irreducible over $\Q$ and $R$ is the maximal order in the
 field $\Q(\sqrt[3]{7})$. $($To check maximality, it is only necessary
 to verify the conditions in {\rm \cite[Lemma 2.10]{BBP}}.$)$ Let
 $p=3$. Then the form $\bar{f}(x,y)$ is irreducible. Therefore, the
 splitting type of $p$ in $\Q(\sqrt[3]{7})$ is $(3)$.
\end{example}

To count cubic fields, we directly use \cite[Theorem 1.3]{TaTh}.
\begin{theorem}\label{thcubiccount}
Let $\FF$ be the parametrized family of cubic $S_3$-fields ordered by
discriminant.  For any prime $p$, conjugacy class $\tau\in
\mathcal{T}_n$ and $\epsilon>0$, we have
\begin{equation}
\begin{array}{rcl}
\displaystyle
|\FF(x)|&=&\displaystyle\frac{1}{3\zeta(3)}x+O(x^{5/6}),\\[.2in]
\displaystyle |\FF^{p,\tau}(x)|&=&\displaystyle
\frac{c_{p,\tau}}{3\zeta(3)}x +O(x^{5/6})
+O_{\epsilon}(x^{7/9+\epsilon} p^{8/9}),\\[.2in] \displaystyle
|\FF^{p\mid\Delta}(x)|&=&\displaystyle
\frac{c_{p\mid\Delta}}{3\zeta(3)}x +O(x^{5/6})
+O_{\epsilon}(x^{7/9+\epsilon} p^{16/9}),
\end{array}
\end{equation}
where $c_{p,\tau}=\displaystyle\frac{|\tau|}{6}\frac{p^2}{p^2+p+1}$
and $c_{p\mid\Delta}=\displaystyle\frac{p+1}{p^2+p+1}$.
\end{theorem}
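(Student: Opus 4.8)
The plan is to deduce all three estimates from Taniguchi and Thorne's work on cubic field counting with local conditions, after translating the specifications ``$p$ has splitting type $\tau$ in $K$'' and ``$p\mid\Delta(K)$'' into congruence conditions on the lattice $V(\Z)$ of integral binary cubic forms via the Delone--Faddeev parametrization of Theorem~\ref{thcubicrings}. By Theorem~\ref{thcubicrings}(1), the $S_3$-cubic fields correspond to $\GL_2(\Z)$-orbits of irreducible $f\in V(\Z)$ whose cubic ring is maximal, with $\Delta(f)=\Delta(K_f)$; thus $\FF$ ordered by $|\Delta|$ is exactly the set of such orbits with $0<|\Delta(f)|<x$. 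For $p$ unramified in $K_f$, \S\ref{sub:etale} shows that the splitting type depends only on $\bar f\in V(\F_p)$, and that the set of $\bar f$ of a fixed unramified splitting type $\tau$ is a single $\GL_2(\F_p)$-orbit $O_\tau\subset V(\F_p)^{\Delta\neq 0}$, every lift of which is automatically maximal at $p$; hence $\FF^{p,\tau}(x)$ corresponds to $\GL_2(\Z)$-orbits of maximal forms of bounded discriminant satisfying the single mod-$p$ condition $\bar f\in O_\tau$. On the other hand $p\mid\Delta(f)\iff\Delta(\bar f)=0$, which does not force maximality at $p$, so $\FF^{p\mid\Delta}(x)$ is cut out by the conjunction of ``$\Delta(\bar f)=0$'' with the mod-$p^2$ maximality criterion of Theorem~\ref{thcubicrings}(2), i.e.\ by a mod-$p^2$ condition.

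Next I would invoke \cite[Theorem 1.3]{TaTh}, which counts $\GL_2(\Z)$-orbits of maximal integral binary cubic forms of bounded discriminant subject to any prescribed finite set of congruence conditions, with a main term equal to $x$ times a product of local densities, a secondary-type error $O(x^{5/6})$ that is uniform in the conditions, and a further error of shape $O_\epsilon(x^{7/9+\epsilon}M^{8/9})$ where $M$ is the modulus of the conditions imposed. Applying this with $M=p$ for $\FF^{p,\tau}(x)$ and $M=p^2$ for $\FF^{p\mid\Delta}(x)$ produces exactly the error terms $O_\epsilon(x^{7/9+\epsilon}p^{8/9})$ and $O_\epsilon(x^{7/9+\epsilon}p^{16/9})$ in the statement; imposing no condition at $p$ recovers the Davenport--Heilbronn asymptotic $|\FF(x)|=x/(3\zeta(3))+O(x^{5/6})$ (see also \cite{BBP}).

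It then remains to identify the local factor attached to the condition at $p$ with $c_{p,\tau}/(3\zeta(3))$, resp.\ $c_{p\mid\Delta}/(3\zeta(3))$; equivalently, to check that $\mathrm{Vol}(V(\Z_p)^{\tau})/\mathrm{Vol}(V(\Z_p)^{\max})=c_{p,\tau}$. A direct count over $\F_p$, enumerating binary cubic forms by the Galois-orbit structure of their roots in $\P^1$, gives $|V(\F_p)^{\Delta\neq 0}|=p(p-1)^2(p+1)$ and $|O_{(111)}|=\binom{p+1}{3}(p-1)$, $|O_{(21)}|=(p+1)\tfrac{p(p-1)}{2}(p-1)$, $|O_{(3)}|=\tfrac{p^3-p}{3}(p-1)$, so that $|O_\tau|/|V(\F_p)^{\Delta\neq 0}|=|\tau|/6$ \emph{exactly} for each of the three classes $\tau\in\T_3$. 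Since the preimage of $O_\tau$ in $V(\Z_p)$ lies inside $V(\Z_p)^{\max}$, and the density of $V(\Z_p)^{\max}$ in $V(\Z_p)$ is the standard Davenport--Heilbronn local factor $(1-p^{-2})(1-p^{-3})$, we get
\[
c_{p,\tau}=\frac{|O_\tau|/p^4}{(1-p^{-2})(1-p^{-3})}
=\frac{|\tau|}{6}\cdot\frac{(1-p^{-1})(1-p^{-2})}{(1-p^{-2})(1-p^{-3})}
=\frac{|\tau|}{6}\cdot\frac{p^2}{p^2+p+1},
\]
and then $c_{p\mid\Delta}=1-\sum_{\tau\in\T_3}c_{p,\tau}=1-\frac{p^2}{p^2+p+1}=\frac{p+1}{p^2+p+1}$, as asserted.

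The one genuinely hard ingredient, which I would not reprove, is the uniformity in the modulus in \cite[Theorem 1.3]{TaTh} --- the exponents $8/9$ and $16/9$ --- which rests on bounds for the Shintani zeta function of binary cubic forms twisted by congruence conditions (equivalently, on a geometry-of-numbers count of forms in shifted sublattices with explicit control of the noncompact cuspidal region). Everything else is routine: the bijective translation of the splitting and ramification specifications in the first paragraph, and the elementary finite-field point count in the third.
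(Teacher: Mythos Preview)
Your proposal is correct and follows exactly the paper's approach: the paper simply states that the theorem is obtained by directly applying \cite[Theorem 1.3]{TaTh}, which is precisely what you do. Your proposal in fact supplies more detail than the paper---the explicit identification of the splitting-type condition as a mod-$p$ condition versus the ramified-maximal condition as a mod-$p^2$ condition (explaining the exponents $8/9$ and $16/9$), and the clean finite-field count verifying $|O_\tau|/|V(\F_p)^{\Delta\neq 0}|=|\tau|/6$ exactly, together with the Davenport--Heilbronn local maximal density $(1-p^{-2})(1-p^{-3})$ to recover $c_{p,\tau}$ and $c_{p\mid\Delta}$.
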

This concludes the proof of the Sato-Tate equidistribution for the
family $\FF$ of cubic fields, namely the equation
\eqref{e:thmainsncount} with $\delta_0=5/6$ and
$\delta_1=7/9+\epsilon$, together with \eqref{cptau}. Note that in the
two estimates in~\eqref{e:thmainsncount} the exponents $A=8/9$ and
$B=16/9$ differ. Also the exponent $\delta_0=5/ 6$ is sharp
by~\cite{BST} and \cite{TaTh}, which independently establish a
secondary main term for the counting function of cubic fields. We
obtain the bound on the support to be $\alpha<4/41$ in
Theorem~\ref{th:low-lying}.

\subsection{The family of quartic fields}

Let $V=2\otimes\Sym^2(3)$ denote the space of pairs of ternary
quadratic forms. We represent elements in $V$ by a pair of symmetric
$3\times 3$-matrices $A$ and $B$. The group $G=\GL_2\times\SL_3$ acts
on $V$ via the action
$$(g_2,g_3)\cdot (A,B):=(g_3^tAg_3,g_3^tBg_3)\cdot g_2^t.$$ A result
of Bhargava \cite{Bquartic} states that isomorphism classes of pairs
$(Q,C)$, where $Q$ is a quartic ring and $C$ is a cubic resolvent ring
of $Q$, are parametrized by $G(\Z)$-orbits on $V(\Z)$.  The definition
of the cubic resolvent is not important for this section.
\begin{theorem}\label{thquarticrings}
There is a natural bijection between the set of $G(\Z)$-equivalence
classes on $V(\Z)$ and the set of isomorphism classes of pairs
$(Q,C)$, where $Q$ is a quartic ring and $C$ is a cubic resolvent ring
of $Q$.
\end{theorem}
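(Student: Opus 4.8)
This is the parametrization theorem of Bhargava \cite{Bquartic}, and the plan is to recall the structure of its proof: construct mutually inverse maps between $G(\Z)\backslash V(\Z)$ and the set of isomorphism classes of pairs $(Q,C)$, and verify the required identities.

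First I would build the map from $V(\Z)$ to pairs $(Q,C)$. Represent $v\in V(\Z)$ by a pair of symmetric $3\times 3$ integer matrices $(A,B)$, and set $g_v(x,y):=\det(xA-yB)$, up to Bhargava's normalizing constant, a binary cubic form over $\Z$; by the Delone--Faddeev--Gan--Gross--Savin correspondence (Theorem~\ref{thcubicrings}(1), for the twisted $\GL_2$-action) this determines a cubic ring $C$, which will be the resolvent. For the quartic ring $Q$, the guiding picture is that the conics $\{Q_A=0\}$ and $\{Q_B=0\}$ in $\P^2$ meet in a degree-$4$ subscheme $Z_v\subset\P^2_\Z$, and $Q$ should be its ring of global functions: over a field where $Z_v$ is four distinct points this is the \'etale algebra of functions on those points. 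Following \cite{Bquartic} one makes this precise over $\Z$ by writing down explicit structure constants, polynomial in the entries of $(A,B)$, for a $\Z$-basis $1,\alpha_1,\alpha_2,\alpha_3$ of $Q$, together with the resolvent structure relating $Q$ and $C$. One then checks that the assignment is constant on $G(\Z)$-orbits: the $\SL_3(\Z)$-factor is a change of homogeneous coordinates on $\P^2$, which does not alter $Z_v$ up to isomorphism and fixes $g_v$ since $\det(g_3)=1$, while the $\GL_2(\Z)$-factor replaces the pencil $\langle Q_A,Q_B\rangle$ by another pencil with the same base locus and acts on $g_v$ through the twisted action, the determinant twist absorbing the scalar.

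Conversely, given $(Q,C)$, I would choose ``normalized'' $\Z$-bases $1,\alpha_1,\alpha_2,\alpha_3$ of $Q$ and $1,\beta_1,\beta_2$ of $C$ in the sense of \cite{Bquartic}, using the translations $\alpha_i\mapsto\alpha_i+\Z$ and $\beta_j\mapsto\beta_j+\Z$ to kill prescribed structure constants. In such bases the resolvent structure of $(Q,C)$ is recorded, via the identifications $Q/\Z\cong\Z^3$ and $C/\Z\cong\Z^2$ and a symmetric bilinear map $\Sym^2(Q/\Z)\to C/\Z$ built from multiplication and the resolvent map, by an element of $2\otimes\Sym^2(3)$, i.e.\ a pair $(A,B)\in V(\Z)$. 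Changing the normalized bases changes $(A,B)$ precisely by an element of $G(\Z)=\GL_2(\Z)\times\SL_3(\Z)$, so the $G(\Z)$-orbit is well defined. Finally I would check that the two constructions are mutually inverse by comparing the explicit structure constants produced by the two recipes, and record that the discriminant of $v$, of $Q$, and of $C$ all agree.

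The genuinely non-formal steps, and where I expect the real work to lie, are two. First, one must verify that the multiplication on $Q$ defined by Bhargava's structure constants is \emph{associative}: this is a polynomial identity in the entries of $(A,B)$ which can be checked directly but which conceptually is forced by the geometric description of $Q$ over fields together with a Zariski-density and specialization argument. Second, in the reverse direction one must show that \emph{every} quartic ring admits a cubic resolvent, and pin down the definition of ``cubic resolvent ring'' precisely enough that the correspondence is a genuine bijection rather than merely a surjection --- the delicate point being which extra data a resolvent records in the degenerate (non-maximal, non-$S_4$) cases, and the fact that the basis normalizations above can be performed over $\Z$ rather than merely over $\Q$. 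Both points are carried out in \cite{Bquartic}.
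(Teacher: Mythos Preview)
The paper does not prove this theorem; it simply states it as a citation of Bhargava's result \cite{Bquartic} and moves on. Your proposal is therefore not comparable to the paper's ``proof'' (there is none), but is instead a reasonable sketch of the argument in \cite{Bquartic} itself: the forward map via the resolvent cubic $4\det(Ax-By)$ and explicit structure constants for $Q$, the inverse map via normalized bases and the quadratic map $\Sym^2(Q/\Z)\to C/\Z$, and the two genuine technical points (associativity of the multiplication on $Q$, and existence/well-definedness of resolvents in degenerate cases). For the purposes of this paper a bare citation suffices, and your outline, while accurate in spirit, goes well beyond what is called for here.
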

The congruence conditions defining maximality may be found in
\cite{B1}. The action of $G$ on $V$ has a unique polynomial invariant
$\Delta$ called the {\it discriminant}. If $(A,B)\in V(\Z)$
corresponds to the pair $(Q,C)$, then we have
$\Delta(A,B)=\Delta(Q)=\Delta(C)$.

To count $S_4$-quartic fields having prescribed splitting conditions,
we directly use a result of Ellenberg--Pierce--Wood \cite[Theorem
  4.1]{EPW}, which improves on the results of \cite{BBP}, which in
turn builds on work of Bhargava \cite{B1} determining asymptotics for
the counting function of $S_4$-quartic fields.

\begin{theorem}\label{thquarticcount}
Let $\FF$ be the parametrized family of quartic $S_4$-fields ordered
by discriminant.  Let $\epsilon>0$. Then, for any prime $p$ and
conjugacy class $\tau\in \mathcal{T}_n$, we have
\begin{equation}\label{quarticcount}
\begin{array}{rcl}
	\displaystyle
        |\FF(x)|&=&\displaystyle\frac{5\beta}{24}x+O_{\epsilon}(x^{23/24+\epsilon}),\\[.2in]
        \displaystyle
        |\FF^{p,\tau}(x)|&=&\displaystyle\frac{5c_{p,\tau}\beta}{24}x+O_{\epsilon}(x^{23/24+\epsilon}
        p^{1/2+\epsilon}),\\[.2in] \displaystyle
        |\FF^{p\mid\Delta}(x)|&=&\displaystyle\frac{5c_{p\mid\Delta}\beta}{24}x+
        O_{\epsilon}(x^{23/24+\epsilon}
        p^{1/2+\epsilon}),
\end{array}
\end{equation}
where $\beta=\displaystyle\prod_p(1+p^{-2}-p^{-3}-p^{-4})$,
$c_{p,\tau}=\displaystyle\frac{|\tau|}{24}\frac{p^3}{p^3+p^2+2p+1}$,
and $c_{p\mid\Delta}=\displaystyle\frac{(p+1)^2}{p^3+p^2+2p+1}$. 
\end{theorem}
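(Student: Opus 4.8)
The plan is to deduce the three asymptotics in~\eqref{quarticcount} from the geometry‑of‑numbers count of $S_4$‑quartic fields — upgraded so as to allow a prescribed splitting or ramification condition at a single prime $p$ with an error term that is both power‑saving in $x$ and polynomial in $p$ — and then to identify the constants $c_{p,\tau}$ and $c_{p\mid\Delta}$ through the $p$‑adic volume computation sketched heuristically in~\eqref{eqh1}--\eqref{eqh3}. For the unconditioned count, $|\FF(x)|=\tfrac{5\beta}{24}x+O_\epsilon(x^{23/24+\epsilon})$ is Bhargava's theorem~\cite{B1}, the exponent $23/24=1-\tfrac{1}{24}$ arising from the analysis of integral points in the cusps of a fundamental domain for $G(\Z)\backslash V(\R)$ with $V=2\otimes\Sym^2(3)$ and $G=\GL_2\times\SL_3$ (refined in~\cite{BBP}). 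For $|\FF^{p,\tau}(x)|$ and $|\FF^{p\mid\Delta}(x)|$ the plan is to invoke~\cite[Theorem~4.1]{EPW}, which sharpens~\cite{BBP}: it counts $S_4$‑quartic fields of bounded discriminant subject to a fixed splitting (resp.\ ramification) type at $p$, with main term linear in $x$ and error $O_\epsilon(x^{23/24+\epsilon}p^{1/2+\epsilon})$. This already yields~\eqref{e:thmainsncount} with $\delta_0=\delta_1=23/24+\epsilon$ and $A=B=1/2+\epsilon$; what remains is to match the main‑term constants with those of~\eqref{quarticcount}.

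Since the archimedean factor and the local factors at all primes $q\neq p$ are the same in $|\FF(x)|$ and in the conditioned counts, the constant identities reduce to the purely $p$‑local statements
\[
c_{p,\tau}=\frac{\Vol\bigl(V(\Z_p)^{\tau}\bigr)}{\Vol\bigl(V(\Z_p)^{\max}\bigr)},\qquad
c_{p\mid\Delta}=\frac{\Vol\bigl(V(\Z_p)^{\max}\cap\{p\mid\Delta\}\bigr)}{\Vol\bigl(V(\Z_p)^{\max}\bigr)},
\]
with additive Haar measure normalized by $\Vol(V(\Z_p))=1$, and where for an unramified $\tau$ the set $V(\Z_p)^{\tau}$ is the full preimage under reduction mod $p$ of the unique $G(\F_p)$‑orbit $O_\tau\subset V(\F_p)^{\Delta\neq 0}$ attached to $\tau$ (cf.~\S\ref{s:families}); such $v$ are automatically maximal. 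For the numerator one has $\Vol(V(\Z_p)^{\tau})=|O_\tau|\,p^{-12}$. The $G$‑action on $V^{\Delta\neq 0}$ factors through $H=(\GL_2\times\SL_3)/\mu_3$ (the kernel being the diagonal cube roots of unity; see~\cite[Table~1]{BSWPre}), $H(\C)$ acts transitively on $V^{\Delta\neq 0}(\C)$ with stabilizer $S_4$, and over $\F_p$ the orbit $O_\tau$ has size $|H(\F_p)|/|\Stab_{S_4}(\sigma_\tau)|$ by orbit--stabilizer and the isomorphism $\Stab_{G(\F_p)}(\bar{v})\cong\Stab_{S_4}(\sigma_\tau)$ of~\cite[Theorem~6]{BSWPre}. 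Using $|H(\F_p)|=|(\GL_2\times\SL_3)(\F_p)|$ (as $|H^1(\F_p,\mu_3)|=|\mu_3(\F_p)|$) and $\sum_\tau|\Stab_{S_4}(\sigma_\tau)|^{-1}=1$ one gets $\Vol(V(\Z_p)^{\tau})=\tfrac{|\tau|}{24}\cdot\tfrac{|H(\F_p)|}{p^{12}}$ and $\sum_{\tau\text{ unram}}\Vol(V(\Z_p)^{\tau})=|H(\F_p)|p^{-12}=(1-p^{-1})(1-p^{-2})^2(1-p^{-3})$.

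For the denominator I would add the volume of the ramified‑but‑maximal locus $V(\Z_p)^{\max}\cap\{p\mid\Delta\}$, which is read off from the description of $V(\Z_p)^{\max}$ by congruence conditions modulo $p^2$ in~\cite{B1} (equivalently from Bhargava's mass formula for maximal quartic $\Z_p$‑algebras~\cite{B3}); a short computation gives $\Vol(V(\Z_p)^{\max})=\tfrac{p^3+p^2+2p+1}{p^3}\cdot\tfrac{|H(\F_p)|}{p^{12}}=(1+p^{-2}-p^{-3}-p^{-4})(1-p^{-2})^2(1-p^{-3})$, which is also consistent with Bhargava's leading constant $\tfrac{5\beta}{24}$. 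Dividing the two volume formulas gives $c_{p,\tau}=\tfrac{|\tau|}{24}\cdot\tfrac{p^3}{p^3+p^2+2p+1}$, whence $c_{p\mid\Delta}=1-\sum_\tau c_{p,\tau}=\tfrac{(p+1)^2}{p^3+p^2+2p+1}$; this re‑confirms $\sum_{\tau\in\T_4}c_{p,\tau}+c_{p\mid\Delta}=1$ as well as $c_{p,\tau}=\tfrac{|\tau|}{24}+O(1/p)$, i.e.~\eqref{cptau}. With $\theta=1$, $\delta_0=\delta_1=23/24+\epsilon$ and $C=1/2+\epsilon$, Theorem~\ref{th:low-lying} then applies for $\alpha<1/24$.

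The hard part is not the $p$‑adic volume bookkeeping, which is routine and essentially already carried out in~\cite{B1,BBP,B3}, but the uniformity in $p$ of the error term: one must show that imposing a congruence condition modulo $p$ (to fix the splitting type) and modulo $p^2$ (to impose maximality) inflates the error by no more than $p^{1/2+\epsilon}$, in spite of the non‑compact cusps of $G(\Z)\backslash V(\R)$. This is precisely what~\cite[Theorem~4.1]{EPW} provides, and the plan is to quote it as a black box, exactly as~\cite[Theorem~1.3]{TaTh} was quoted in the cubic case of Theorem~\ref{thcubiccount}.
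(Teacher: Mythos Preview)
Your proposal is correct and follows the same approach as the paper: both treat Theorem~\ref{thquarticcount} as a direct consequence of \cite[Theorem~4.1]{EPW}, quoted as a black box for the uniform-in-$p$ error term, with the constants $c_{p,\tau}$ and $c_{p\mid\Delta}$ arising from the local density computation that the paper outlines heuristically in~\eqref{eqh1}--\eqref{eqh3}. Your write-up is in fact more detailed than the paper's own treatment, which simply cites \cite{EPW} without spelling out the orbit--stabilizer and $p$-adic volume bookkeeping; your extraction of $\delta_0=\delta_1=23/24+\epsilon$, $A=B=1/2+\epsilon$, and the resulting support $\alpha<1/24$ matches the paper verbatim.
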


This verifies Equations \eqref{e:thmainsncount} and \eqref{cptau} for
the parametrized family of $S_4$-fields with
$\delta_0=\delta_1=23/24+\epsilon$ and $A=B=1/2+\epsilon$, and yields the bound
$\alpha<1/24$ of the support in Theorem~\ref{th:low-lying}.

\subsection{The family of quintic fields}
Let $V=4\otimes\wedge^2(5)$ denote the space of quadruples of $5\times
5$-skew symmetric matrices. We represent elements in $V$ as
$(A,B,C,D)$. The group $G=\GL_4\times\SL_5$ acts on $V$ via the action
$$
(g_1,g_2)\cdot (A,B,C,D):=(g_2^tAg_2,g_2^tBg_2,g_2^tCg_2,g_2^tDg_2)\cdot g_1^t.
$$ 
A result of Bhargava \cite{Bquintic} states that isomorphism classes of pairs
$(Q,R)$, where $Q$ is a quintic ring and $R$ is a sextic resolvent
ring of $Q$, are parametrized by $G(\Z)$-orbits on $V(\Z)$.  Given an
element $(A,B,C,D)\in4\otimes\wedge^2(5)$, the corresponding five
points in $\P^3$ are obtained as the intersection of the five $4\times
4$-Pfaffians of $Ax+By+Cz+Dt$.

Again, the definition of a sextic resolvent ring is not important for
us. See \cite{Bquintic} for a precise description.
\begin{theorem}\label{thquinticcrings}
There is a natural bijection between the set of $G(\Z)$-equivalence
classes on $V(\Z)$ and the set of isomorphism classes of pairs
$(Q,R)$, where $Q$ is a quintic ring and $R$ is a sextic resolvent
ring of $Q$.
\end{theorem}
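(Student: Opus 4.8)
The plan is to follow Bhargava~\cite{Bquintic} and construct two mutually inverse maps, one from $G(\Z)$-orbits on $V(\Z)$ to isomorphism classes of pairs $(Q,R)$, and one back. For the forward map, attach to $v=(A,B,C,D)\in V(\Z)=4\otimes\wedge^2(5)$ the alternating $5\times 5$ matrix of linear forms $M_v:=Ax_1+Bx_2+Cx_3+Dx_4$ over $\Z[x_1,\ldots,x_4]$, and let $\mathrm{Pf}_1(v),\ldots,\mathrm{Pf}_5(v)$ be its five $4\times 4$ sub-Pfaffians (delete the $i$th row and column), which are quadratic forms in the $x_i$. By the Buchsbaum--Eisenbud structure theorem~\cite{BE} these quadrics generate a codimension-three Gorenstein ideal cutting out a length-$5$ subscheme $Z_v\subset\P^3$ when $v$ is nondegenerate, and the ring of functions on $Z_v$ is a quintic ring $Q_v$; a rank-$6$ sextic resolvent $R_v$ is read off from the same matrix together with the canonical pairing on $\wedge^2$ of the ambient $5$-dimensional space. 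The essential input of~\cite{Bquintic} is that these recipes admit \emph{integral} structure constants, so that $v\mapsto(Q_v,R_v)$ extends from the nondegenerate locus to all of $V(\Z)$; a direct computation then shows that the action of $G(\Z)=\GL_4(\Z)\times\SL_5(\Z)$ on $v$ only changes the implicit $\Z$-bases of $Q_v$ and $R_v$, so that the isomorphism class of $(Q_v,R_v)$ depends only on the $G(\Z)$-orbit of $v$.

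For the reverse map, start from a quintic ring $Q$ together with a sextic resolvent $R$, and write $Q=\Z\oplus Q_0$ and $R=\Z\oplus R_0$ with $Q_0,R_0$ free $\Z$-modules of ranks $4$ and $5$ (that $\Z\cdot 1$ splits off is automatic, since $1=nv$ with $n\ge 2$ would force $Q/nQ=0$). The data making $R$ a sextic resolvent of $Q$ yield, after choosing a trivialization $\wedge^5 R_0\cong\Z$, a canonical element $\phi_{Q,R}\in Q_0\otimes\wedge^2 R_0$; picking $\Z$-bases of $Q_0$ and of $R_0$ (the latter compatible with the chosen trivialization) identifies $\phi_{Q,R}$ with a point of $4\otimes\wedge^2(5)$, and the residual ambiguity in these bases is precisely the action of $\GL_4(\Z)\times\SL_5(\Z)$. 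This produces a well-defined map $(Q,R)\mapsto G(\Z)\cdot\phi_{Q,R}$. (That each quintic ring carries at least one sextic resolvent is also part of~\cite{Bquintic}: over an \'etale $\Q$-algebra, $R$ is, up to isomorphism, the classical resolvent sextic --- the subalgebra fixed by the normalizer of a Sylow $5$-subgroup in the Galois closure --- and the general case follows by a Zariski-density argument.)

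Finally one checks that the two maps are mutually inverse, and the strategy is to verify this on the Zariski-dense nondegenerate locus and then propagate. Over an \'etale $\Q$-algebra the round trips reduce to the classical statement that the Pfaffian construction recovers a quintic \'etale algebra together with its resolvent sextic; combined with the fact, recalled in the excerpt from~\cite[Proposition 2.13]{Wright-Yukie}, that the stabilizer in $H(\C)$ of a nondegenerate $v$ is exactly $S_5$, this shows that each composite map restricts to the identity on the nondegenerate locus. Since both composites are given by polynomial formulas in the (integral) structure constants and the nondegenerate locus is Zariski-dense, they are the identity everywhere --- in particular on $\Z$-points, which is the asserted bijection. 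I expect the genuine obstacle to lie in setting up the two maps integrally in the first place: one must verify that the Pfaffian recipe produces a bona fide ring with \emph{integral} multiplication constants for every $v\in V(\Z)$, not merely generic ones, and that the $\SL_5$ rather than $\GL_5$ normalization is exactly what makes the resolvent assignment, and hence the inverse map, well-defined over $\Z$. These are the delicate points settled by the explicit computations of~\cite{Bquintic}, which I would invoke rather than reproduce.
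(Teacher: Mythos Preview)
The paper does not supply its own proof of this theorem: it is stated as a result of Bhargava~\cite{Bquintic}, preceded by the sentence ``A result of Bhargava \cite{Bquintic} states that\ldots'' and followed immediately by a reference to~\cite{B2} for the congruence conditions defining maximality. No proof environment appears.

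Your proposal therefore goes beyond what the paper does, sketching the architecture of Bhargava's argument itself. As a high-level outline it is reasonable: the Pfaffian construction giving five quadrics in $\P^3$, the resolvent data furnishing an element of $4\otimes\wedge^2(5)$ from a pair $(Q,R)$, and the verification of mutual inversion by Zariski density on the nondegenerate locus are indeed the backbone of~\cite{Bquintic}. You are also right to flag the integrality of the structure constants and the $\SL_5$-versus-$\GL_5$ normalization as the substantive technical points, and to concede that you would invoke rather than reproduce the explicit computations that settle them. Since the paper treats the theorem as a black-box citation, there is nothing within the paper to compare your sketch against; your outline is simply more detailed than what appears there.
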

The congruence conditions defining maximality may be found in
\cite{B2}. The action of $G$ on $V$ has a unique polynomial invariant
$\Delta$ called the {\it discriminant}. If $(A,B,C,D)\in V(\Z)$
corresponds to the pair $(Q,R)$, then we have
$\Delta(A,B,C,D)=\Delta(Q)=\Delta(R)$.

To count $S_5$-quintic fields having prescribed splitting, we directly
use \cite[Theorem 5.1]{EPW}.


\begin{theorem}\label{thquinticcount}
Let $\FF$ be the parametrized family of quintic $S_5$-fields ordered
by discriminant.  Let $\epsilon>0$. Then, for any prime $p$ and
conjugacy class $\tau\in \mathcal{T}_n$, we have
\begin{equation}
\begin{array}{rcl}
\displaystyle |\FF(x)|&=&\displaystyle\frac{13\beta}{120}x+O(x^{199/200+\epsilon}),\\[.2in]
\displaystyle |\FF^{p,\tau}(x)|&=&\displaystyle\frac{13c_{p,\tau}\beta}{120}x+O(x^{199/200+\epsilon})+O(x^{79/80+\epsilon} p^{1/2+\epsilon}),\\[.2in]
\displaystyle |\FF^{p\mid\Delta}(x)|&=&\displaystyle\frac{13c_{p\mid\Delta}\beta}{120}x+O(x^{199/200+\epsilon})+O(x^{79/80+\epsilon} p^{1/2+\epsilon}),
\end{array}
\end{equation}
where $\beta=\displaystyle\prod_p(1+p^{-2}-p^{-4}-p^{-5})$, $c_{p,\tau}=\displaystyle\frac{|\tau|}{120}\frac{p^4}{p^4+p^3+2p^2+2p+1}$, and $c_{p\mid\Delta}=\displaystyle\frac{(p+1)(p^2+p+1)}{p^4+p^3+2p^2+2p+1}$.
\end{theorem}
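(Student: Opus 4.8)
The plan is to deduce Theorem~\ref{thquinticcount} from Bhargava's parametrization of quintic rings (Theorem~\ref{thquinticcrings}) together with the uniform counting estimate of Ellenberg--Pierce--Wood. Under the parametrization, maximal quintic rings equipped with a sextic resolvent ring correspond bijectively to $G(\Z)$-orbits on $V(\Z)^\smax$, where $V = 4\otimes\wedge^2(5)$ and $G = \GL_4\times\SL_5$; since a maximal quintic ring has a \emph{unique} sextic resolvent, and two forms give the same quintic field iff they are $G(\Z)$-equivalent, each $S_5$-field is counted exactly once. As the discriminant of an orbit equals the discriminant of the corresponding field, $|\FF(x)|$ is the number of such orbits with $0<|\Delta|<x$, $|\FF^{p\nmid\Delta}(x)|$ restricts to orbits whose reduction in $V(\F_p)$ has nonzero discriminant, $|\FF^{p,\tau}(x)|$ further imposes that this reduction lie in the $G(\F_p)$-orbit $O_\tau = V(\F_p)^\tau$ attached to the splitting type $\tau$ (valid by \S\ref{sub:etale}, using that for unramified $\tau$ every element of $V(\Z_p)^\tau$ is already maximal at $p$), and $|\FF^{p\mid\Delta}(x)|$ is the complementary count.

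First I would recall the main term from \cite{B2}. One counts integral points of bounded discriminant in a fundamental domain for $G(\Z)$ on $V(\R)$; the leading constant $\frac{13}{120}$ is the volume of the relevant region normalized by $|S_5| = 120$, and the Euler product $\beta = \prod_p(1+p^{-2}-p^{-4}-p^{-5})$ is the product over $p$ of the density of $v\in V(\Z_p)$ giving a maximal $\Z_p$-algebra, which \cite{B2} shows is cut out modulo $p^2$. The $S_5$-irreducibility is imposed in two steps: excise the cuspidal regions of the fundamental domain carrying reducible, non-maximal or non-$S_5$ points, then apply Hilbert irreducibility (with a power saving from the Selberg sieve) on the main body; the error $O(x^{199/200+\epsilon})$ records the cost. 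For the counts with a fixed congruence at $p$ one composes with the reductions $V(\Z)\to V(\Z_p)\to V(\F_p)$, and the resulting estimates --- uniform in $p$ with the additional error $O(x^{79/80+\epsilon}p^{1/2+\epsilon})$ --- are exactly \cite[Theorem~5.1]{EPW}, which refines \cite{B1,BBP} by tracking the dependence of the cusp bounds on the congruence modulus.

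It remains to evaluate the local densities. By the identity recorded in \S\ref{s:families-degree-n}, $\frac{c_{p,\tau}}{1-c_{p\mid\Delta}} = \frac{|V(\F_p)^\tau|}{|V(\F_p)^{\Delta\neq 0}|}$; since $V(\F_p)^{\Delta\neq 0}$ is the disjoint union of the orbits $O_\tau$ and, by two applications of orbit--stabilizer using $\Stab_{G(\F_p)}(\bar v)\cong\Stab_{S_5}(\sigma_\tau)$ (see \cite[Proposition~2.13]{Wright-Yukie}, \cite[Theorem~6]{BSWPre}), the factors $|G(\F_p)|$ cancel and $\frac{|V(\F_p)^\tau|}{|V(\F_p)^{\Delta\neq 0}|} = \frac{|\tau|}{|S_5|}$ exactly, we get $c_{p,\tau} = (1-c_{p\mid\Delta})\frac{|\tau|}{120}$. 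Finally $1-c_{p\mid\Delta} = \frac{\Vol(\{v\in V(\Z_p):p\nmid\Delta(v)\})}{\Vol(V(\Z_p)^\max)} = \frac{|V(\F_p)^{\Delta\neq0}|/p^{40}}{\beta_p}$, where $\beta_p = 1+p^{-2}-p^{-4}-p^{-5} = \frac{(p-1)(p^4+p^3+2p^2+2p+1)}{p^5}$ and an elementary $\F_p$-point count gives $|V(\F_p)^{\Delta\neq 0}| = (1-\tfrac1p)p^{40}$; this yields $1-c_{p\mid\Delta} = \frac{p^4}{p^4+p^3+2p^2+2p+1}$, hence the stated formulas for $c_{p,\tau}$ and $c_{p\mid\Delta}$, and in particular \eqref{cptau} since these factors are $1+O(1/p)$.

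The genuinely hard input is the cusp analysis behind \cite{B2} and its uniform refinement \cite[Theorem~5.1]{EPW}: $V(\R)$ has non-compact cusps in which forms can have large coefficients but small discriminant, and bounding the number of integral points (or of points in a fixed congruence coset) lying there, while keeping the dependence on $p$ polynomial of low degree, is the technical heart of the matter; everything else --- the parametrization, the reductions to $\F_p$, and the rational-function evaluations of $c_{p,\tau}$ and $c_{p\mid\Delta}$ --- is comparatively routine. Assembling these gives \eqref{e:thmainsncount} with $\delta_0 = 199/200+\epsilon$, $\delta_1 = 79/80+\epsilon$ and $A = B = 1/2+\epsilon$, which with $\theta = 1$ yields the support $\alpha < 1/100$ in Theorem~\ref{th:low-lying}.
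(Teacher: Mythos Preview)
Your proposal is correct and follows essentially the same route as the paper: both simply invoke \cite[Theorem~5.1]{EPW} for the uniform counting estimates, with the parametrization of \cite{Bquintic,B2} in the background. You supply more detail than the paper does on the exact evaluation of $c_{p,\tau}$ and $c_{p\mid\Delta}$ via the orbit decomposition of $V(\F_p)^{\Delta\neq 0}$; the paper only gives the heuristic \eqref{eqh1}--\eqref{eqh3} with $O(1/p)$ errors and then states the exact values. One small omission: the paper attributes the $O(x^{199/200+\epsilon})$ error specifically to the bound on non-$S_5$ quintic orders in \cite{ShTs}, which you describe only as ``Selberg sieve'' without the reference.
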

%
The additional error of $O(X^{199/200+\epsilon})$ (in comparison with
the quartic case) arises from the bound on the number of quintic
orders that are not $S_5$-orders obtained in~\cite{ShTs}. Both
\cite[Theorem 5.1]{EPW} and \cite{ShTs} use the methods in \cite{B2}
used to determine asymptotics for the counting function of quintic
fields.

This verifies Equations \eqref{e:thmainsncount} and
\eqref{cptau} for the parametrized family of
$S_5$-fields, this time with $\delta_0=199/200+\epsilon$,
$\delta_1=79/80+\epsilon$ and $A=B=1/2+\epsilon$. This yields the bound
$\alpha<1/100$ of the support in Theorem~\ref{th:low-lying}.

\section{Other parametric families of quadratic and cubic fields}\label{s:other}
In this section, we consider families of quadratic and cubic fields
obtained by different parametrizations. The quadratic fields will be
constructed as quadratic resolvents of $S_3$-fields. The cubic fields
will be constructed as resolvents of $S_4$-fields. Thus this section
is an example of constructing one family from another.
We shall verify the Sato-Tate equidistribution for these families and
show that the assumptions of Theorem~\ref{th:low-lying} are satisfied
which enables us to determine that the symmetry type of the low-lying
zeros is symplectic.

\subsection{A parametric family of quadratic fields}
Every quadratic field can be written uniquely in the form
$K=\Q(\sqrt{d})$, where $d$ is a fundamental discriminant.
The discriminant of such a field $K$ is equal to $d$. Let $\zeta_K$
denote the zeta function of $K$. It factors as
$\zeta_K(s)=\zeta(s)L(s,\chi)$, where $\zeta(s)$ is the Riemann zeta
function and $L(s,\chi)$ is the Dirichlet $L$-function corresponding
to the quadratic character $\chi$ defined by the Kronecker symbol
$\chi(n)=\bigl(\frac{d}{n}\bigr)$. The conductor of this $L$-function
is equal to $|d|$.

We consider the family $\FF$ of quadratic fields arising as the
quadratic resolvents of nowhere totally ramified cubic fields. A cubic
field $K_3$ is said to be {\it nowhere totally ramified} if no prime
$p$ factors as $\mathcal{P}^3$ in $K_3$. Suppose that $K_3$ is a
nowhere totally ramified cubic $S_3$-extension of $\Q$ having
discriminant $D$. Let $K_6$ denote the Galois closure of $K_3$, and
$K$ denote the unique quadratic subfield of $K_6$. The field $K$ is
called the {\it quadratic resolvent field} of $K_3$. It follows that
$K_6$ is an unramified cubic extension of $K$ and that the
discriminant of $K$ is $D$.  Thus the family $\FF$ is parametrized as
\begin{equation*}
\FF=\{\Q(\sqrt{\Delta(f)}):f\in \GL_2(\Z)\backslash\Sym^3(\Z^2)^{\rm{ntr}}\},
\end{equation*}
where $f$ ranges over $\GL_2(\Z)$-orbits of maximal integral binary
cubic forms that are nowhere totally ramified. We order elements in
$\FF$ by discriminant. For each $x\ge 1$, the set $\FF(x)$ consists of
the quadratic fields in $\FF$ having discriminant less than $x$ in
absolute value.  Note that the quadratic fields in $\FF$ occur with
multiplicities. In fact, \cite{DH} implies that quadratic fields $K$
appear in $\FF$ with a multiplicity of $(\#\Cl(K)[3]-1)/2$, where
$\Cl(K)$ denotes the class group of $K$.  Therefore, it is also
possible to think of $\FF$ as a weighted family of quadratic fields,
where each field $K$ is weighted with $(\#\Cl(K)[3]-1)/2$. However, we
prefer to consider $\FF$ as a geometric family arising from the space
of integral binary cubic forms.

Recall the branched covering $X\to V$ of degree three (described in
the introduction), where $V\simeq \A^4$ is the space of binary cubic forms and
$X\subset V\times \P^1$ is the zero locus.  We construct $Y\subset
V\times \P^1$ defined by the zero locus of the polynomial $x^2 -
\Delta(f)y^2$. This is a branched covering $Y\to V$ of degree
two. Clearly it is $\GL_2$-equivariant. Restricting to $V^{\Delta\neq
  0}$ we obtain an \'etale covering. The \'etale covering $Y\to
V^{\Delta\neq 0}$ is obtained from the \'etale covering $X\to
V^{\Delta\neq 0}$ by the resolvent construction applied to this
relative situation (i.e.\ applied to each fiber). Our parametric
family $\FF$ is attached to the covering $Y\to V$ as in
Section~\ref{s:setup}. Alternatively, we could have constructed the
family $\FF$ starting from $X\to V$, but using the one-dimensional
Artin representation $\Gal(K_6/\Q) \to S_3\to\GL_1(\C)$ (the sign
character).

\subsection{Symmetry type corresponding to this family of quadratic fields}

For our purposes it will be necessary to relate the splitting type of
$p$ in a nowhere totally ramified cubic field $K_3$ to the splitting
type of $p$ in the quadratic resolvent of $K_3$. The splitting type of
a prime $p$ in $K_3$ determines the splitting type of $p$ in $K_6$,
the Galois closure of $K_3$, and hence determines the splitting type
of $p$ in $K_2$, the quadratic resolvent of $K_3$. These splitting
types can be immediately computed by applying the method of
\cite{Wood}, yielding the following lemma.
\begin{lemma}\label{thcqsp}
Let $K_3$ be a cubic field that is nowhere totally ramified, and let $K_2$
denote its quadratic resolvent field.  If $p$ has splitting type
$(111)$ or $(3)$ in $K_3$ then $p$ has splitting type $(11)$ in $K_2$ and
if $p$ has splitting type $(21)$ in $K_3$ then $p$ has splitting type
$(2)$ in $K_2$.
\end{lemma}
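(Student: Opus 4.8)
The plan is to pass to the Galois closure $K_6$ of $K_3$, which is an $S_3$-extension of $\Q$ (the case relevant to the family $\FF$), and to read off both splitting types from the Frobenius conjugacy class $\Frob_p\in S_3$ acting on the relevant coset spaces, exactly as in the method of Wood recalled in \S\ref{s:setup}. First I would note that the three splitting types $(111)$, $(21)$, $(3)$ appearing in the statement are precisely the splitting types of primes unramified in $K_3$, and that a prime ramifies in $K_6$ if and only if it ramifies in $K_3$, since a number field and its Galois closure have the same set of ramified primes. Hence in each of the three cases $p$ is unramified in $K_6$, so $\Frob_p$ is a well-defined conjugacy class in $\Gal(K_6/\Q)\cong S_3$.

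Under this identification the cubic field $K_3$ corresponds to a point stabilizer $\langle(12)\rangle$ of order two, while its quadratic resolvent $K_2$ corresponds to the unique subgroup of order three, namely the normal subgroup $A_3$. The splitting type of $p$ in $K_3$ is the cycle type of $\Frob_p$ on the three cosets $S_3/\langle(12)\rangle$: it is $(111)$, $(21)$, $(3)$ according as $\Frob_p$ is trivial, a transposition, or a $3$-cycle. For $K_2$ one uses that $A_3$ is normal, so $\Frob_p$ acts on the two cosets $S_3/A_3\cong\Z/2\Z$ through its image under the sign character: it fixes both cosets when $\mathrm{sgn}(\Frob_p)=+1$ and interchanges them when $\mathrm{sgn}(\Frob_p)=-1$. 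Therefore $p$ has splitting type $(11)$ in $K_2$ when $\Frob_p$ is trivial or a $3$-cycle, and splitting type $(2)$ when $\Frob_p$ is a transposition. Matching these against the $K_3$-computation yields exactly $(111),(3)\mapsto(11)$ and $(21)\mapsto(2)$, as claimed.

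A completely equivalent route avoids $K_6$ via Stickelberger's theorem: for a generator $f\in\Z[x]$ of $K_3$ and a prime $p$ not dividing $[\cO_{K_3}:\Z[x]/(f)]$, the reduction $\bar f$ is separable over $\F_p$ and $\left(\frac{\disc f}{p}\right)=(-1)^{3-r}$, where $r$ is the number of irreducible factors of $\bar f$ over $\F_p$; this sign is $+1$ for splitting types $(111)$ and $(3)$ and $-1$ for type $(21)$. Since $\disc f$ differs from the field discriminant $D=\Disc(K_3)$ by a rational square and $K_2=\Q(\sqrt D)$, the prime $p$ splits in $K_2$ precisely when the sign is $+1$, giving the same dichotomy. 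I do not anticipate a genuine obstacle here; the only points requiring care are the ramification bookkeeping needed to make $\Frob_p$ meaningful in all three cases, and the observation that the quadratic resolvent corresponds to the normal subgroup $A_3$, which is what makes the $S_3/A_3$-action factor through the sign and produces the clean two-case answer.
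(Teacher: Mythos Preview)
Your proof is correct and is exactly the approach the paper takes: pass to the Galois closure $K_6$ and compute both splitting types from the $\Frob_p$-action on the relevant coset spaces, which is precisely ``the method of \cite{Wood}'' invoked in the paper's one-line proof. You have simply spelled out the details (and the Stickelberger variant is a pleasant alternative, though not needed here).
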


The asymptotics of $|\FF(x)|$ is the result of Davenport-Heilbronn
\cite[Theorem 3]{DH} on the average $3$-part of the class group of quadratic
fields (this result is restated in \cite[Theorem 2]{BST}, and a
simpler proof is provided). The counting result \cite[Theorem
  1.4]{TaTh}, in conjunction with Lemma~\ref{thcqsp}, implies the
analogues of Equations \eqref{e:thmainsncount} and \eqref{cptau} for
$\FF$, with $\delta_0=5/6$, $\delta_1=18/23+\epsilon$, $A=20/23$, and
$B=40/23$. Thus $\FF$ is an $S_2$-family in the sense that for a fixed
prime $p$, the splitting types $(11)$ and $(2)$ occur equally often in
$\FF$.

As in Section~\ref{s:setup}, we define the average conductor $\cL$
which in fact coincides with the average conductor of the family of
cubic fields. Theorem~\ref{th:low-lying} then follows for a
Paley-Wiener function $f$ whose Fourier transform has support in
$[-\alpha,\alpha]$, with $\alpha< \frac{10}{103}$.

Since $(\#\Cl(K)[3]-1)/2$ is equal to the number of index-$3$
subgroups of $\Cl(K)$, $\FF(x)$ can be viewed as a weighted set of
$L$-functions $L(s,\chi_d)$ arising from all quadratic fields
$K=\Q(\sqrt{d})$, where each field is counted with multiplicity
$(\#\Cl(d)[3]-1)/2$.  Since the Sato-Tate measure of the unweighted
family of quadratic fields is the same, we deduce the same Sato-Tate
equidistribution also when the fields are counted with multiplicity
$\#\Cl(d)[3]$. The same holds for the symplectic symmetry type of
low-lying zeros, so we can for example deduce, when summing over
positive fundamental discriminants $d$, that
\begin{equation}\label{quadraticresult}
\lim_{x\to\infty}\frac{\pi^2}{4 x}\sum_{0< d <x}\#\Cl(d)[3]\sum_j f\bigg(\frac{\gamma^{(j)}_d\cL}{2\pi}\bigg)=\widehat{f}(0)-f(0)/2.
\end{equation}
Note that $\#\Cl(d)[3]$ is $\frac{4}{3}$ on average over
asymptotically $\frac{3x}{\pi^2}$ positive fundamental discriminants
$0<d<x$.

\subsection{A parametric family of cubic fields}

We now consider a family of cubic fields arising as cubic resolvents
of certain quartic fields. 
Given a quartic $S_4$-field
$K_4$, let $K_{24}$ denote its Galois closure. The field $K_6$,
corresponding to the subgroup $V_4\subset S_4$ generated by the double
transpositions in $S_4$, is Galois and its Galois group is
$S_4/V_4\cong S_3$. Let $K_3$ denote a cubic $S_3$-field contained in
$K_6$ ($K_3$ is unique up to conjugation). Then $K_3$ is called the
{\it cubic resolvent field} of $K_4$.

A quartic field $K_4$ is said to be {\it nowhere overramified} if no
rational prime $p$ has splitting type $(1^21^2)$, $(2^2)$, or $(1^4)$
in $K_4$. If $K_4$ is a nowhere overramified quartic field and its
cubic resolvent field is $K_3$, then the discriminant of $K_4$ is
equal to the discriminant of $K_3$. To give a description of the
family of cubic resolvents of nowhere overramified quartic fields as a
geometric family, we have the following theorem that is a result of
Bhargava \cite{Bquartic}.
\begin{theorem}\label{thmparcubicres}
Let $(Q,C)$ be a pair of rings, where $Q$ is the maximal order of a
nowhere overramified quartic field $K_4$ and $C$ is the (unique) cubic
resolvent ring of $Q$. Let $(A,B)$ be a pair of integral ternary
quadratic forms such that the $\GL_2(\Z)\times\SL_3(\Z)$-orbit of
$(A,B)$ corresponds to $(Q,C)$ under the bijection of {\rm
  \cite[Theorem~1]{Bquartic}}. Then, under the Delone-Faddeev
parametrization {\rm \cite{DF}}, the cubic ring $C$ corresponds to the
binary cubic form $4\det(Ax-By)$. Furthermore, $C$ is the maximal
order of the cubic resolvent field of $K_4$.
\end{theorem}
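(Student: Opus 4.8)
The plan is to reconstruct the argument, which is due to Bhargava \cite{Bquartic}, from two ingredients: the explicit form of the resolvent map, and a discriminant count. The first assertion is essentially the definition of the resolvent map in \cite{Bquartic}. Writing $f_{(A,B)}(x,y):=4\det(Ax-By)$, one checks that $f_{(A,B)}$ is $\SL_3(\Z)$-invariant, since $\det(g_3^tAg_3)=(\det g_3)^2\det A=\det A$ for $g_3\in\SL_3$, and that under $g_2\in\GL_2(\Z)$ acting on the pair $(A,B)$ it transforms by the twisted $\GL_2$-action on $\Sym^3(\Z^2)$ of \S\ref{s:families}; hence $(A,B)\mapsto f_{(A,B)}$ descends to a map of orbits. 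Bhargava constructs the cubic resolvent ring $C$ of the quartic ring $Q$ attached to $(A,B)$ as (a based version of) the Delone--Faddeev cubic ring of $f_{(A,B)}$: one writes down the multiplication table of $Q$ determined by $(A,B)$ and exhibits the linear and quadratic resolvent maps $Q\to C$. So for the first assertion I would record this identity together with the equivariance checks above, citing \cite[\S2]{Bquartic}.

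For the second assertion I would argue by discriminants. First, $C$ is an order in $K_3$: since $Q=\cO_{K_4}$ we have $\Delta(A,B)=\Delta(Q)=\Delta(K_4)\neq 0$, so $(A,B)$ lies in the locus $\Delta\neq 0$, and base-changing the construction to $\Q$ (equivalently, applying Delone--Faddeev over $\Q$ to $f_{(A,B)}$) identifies $C\otimes\Q$ with the classical resolvent cubic \'etale algebra of the quartic $\Q$-algebra $Q\otimes\Q=K_4$, which for an $S_4$-quartic is a cubic field with Galois closure $K_6$, namely the cubic resolvent field $K_3$. Hence $C\subseteq\cO_{K_3}$, so $\Delta(C)=[\cO_{K_3}:C]^2\,\Delta(K_3)$. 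On the other hand, the invariant-theoretic identity recorded after Theorem~\ref{thquarticrings} gives $\Delta(C)=\Delta(A,B)=\Delta(Q)=\Delta(K_4)$, the last step because $Q$ is maximal, and the hypothesis that $K_4$ is nowhere overramified gives $\Delta(K_4)=\Delta(K_3)$ (recalled just before the statement). Combining these, $[\cO_{K_3}:C]^2=1$, so $C=\cO_{K_3}$.

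The two points that need genuine care, and where the real work lies, are the following. First, the identification $C\otimes\Q\cong K_3$ uses the compatibility of Bhargava's resolvent construction with base change to a field and with the classical resolvent of a quartic \'etale algebra; this is harmless but should be stated precisely, and it can alternatively be extracted from $f_{(A,B)}=4\det(Ax-By)$ by checking that the splitting field of this binary cubic over $\Q$ is $K_{24}^{V_4}=K_6$. Second, and this I expect to be the genuine obstacle, the equality $\Delta(K_4)=\Delta(K_3)$ for nowhere overramified $K_4$: one proves it prime by prime, using the conductor--discriminant formula to reduce, at each ramified $p$, to comparing the local Artin conductors attached to $K_4$ and to $K_3$, which are read off from the dictionary between the decomposition type of $p$ in $K_4$ and in $K_3$ (the quartic analogue of Lemma~\ref{thcqsp}, computed by Wood's method); the condition ``nowhere overramified'' is exactly what excludes the splitting types $(1^21^2)$, $(2^2)$, $(1^4)$ at which those conductors differ. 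Equivalently, one may invoke the congruence conditions modulo $p^2$ of \cite{Bquartic,B1} describing maximality of $Q$ at $p$ and check that, away from the overramified primes, they coincide with the conditions for $C$ to be maximal at $p$.
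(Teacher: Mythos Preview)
The paper does not give its own proof of this theorem; it is stated as a result of Bhargava with a citation to \cite{Bquartic}, and the text moves on immediately to defining the family $\FF$. Your proposal is therefore not competing with a proof in the paper but supplying one, and it is correct.

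The first assertion is indeed how the resolvent map is set up in \cite{Bquartic}: the cubic resolvent ring attached to $(A,B)$ is by construction the Delone--Faddeev ring of $4\det(Ax-By)$, and your equivariance checks under $\SL_3(\Z)$ and $\GL_2(\Z)$ are right. For the second assertion, your discriminant argument is clean: the paper already records both $\Delta(A,B)=\Delta(Q)=\Delta(C)$ (after Theorem~\ref{thquarticrings}) and $\Delta(K_4)=\Delta(K_3)$ for nowhere overramified $K_4$ (just before the statement), so once you know $C\otimes\Q\cong K_3$ the index relation $\Delta(C)=[\cO_{K_3}:C]^2\Delta(K_3)$ forces $C=\cO_{K_3}$. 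You correctly flag that the real content lies in $\Delta(K_4)=\Delta(K_3)$ and in the identification $C\otimes\Q\cong K_3$; your sketches for both (conductor--discriminant prime by prime via the splitting-type dictionary of \cite{Wood}, and compatibility of Bhargava's resolvent with the classical one over $\Q$) are the standard routes and are fine.
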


We now define our family $\FF$ of cubic fields as follows:
\begin{equation*}
\FF=\{K(f):f=4\det(Ax-By),\;(A,B)\in(\GL_2(\Z)\times\SL_3(\Z))\backslash(\Z^2\otimes\Sym^2(\Z^3))^{{\rm nor}}\},
\end{equation*}
where $K(f)$ denotes the cubic field that is the field of fractions of
the cubic ring corresponding to $f$, and $(A,B)$ runs over
$\GL_2(\Z)\times\SL_3(\Z)$-orbits of maximal integral pairs of ternary
quadratic forms that are nowhere overramified. Note that Theorem
\ref{thmparcubicres} implies that the discriminant of $K(f)$ is equal
to the discriminant of $(A,B)$. We order elements in $\FF$ by
discriminant and denote the set of elements in $\FF$ with discriminant
less than $x$ by $\FF(x)$.

Let $V$ denote the space of pairs of ternary quadratic forms.  Given a
generic element $(A,B)\in V$, we obtain four points in $\P^2$, namely,
the four points of intersection of the quadrics corresponding to $A$
and $B$. We also obtain three points in $\P^1$, namely, the three
roots of the cubic resolvent form $4\det(Ax-By)$ of $(A,B)$. We thus
obtain the natural space $Z\subset V\times\P^2\times\P^1$, and a
degree-12 branched covering $Z\to V$. Taking the intersection of $Z$
with $V\times\P^2$, we obtain a branched covering $X\to V$ of degree
four, and taking the intersection of $Z$ with $V\times\P^1$, we obtain
a branched covering $Y\to V$ of degree three. All three branched
coverings are $\GL_2\times\SL_3$-equivariant.

Consider the family of $L$-functions associated to $\FF$, where for
each cubic $S_3$-field $K_3\in\FF$, we take the Artin $L$-function
$L(s,\rho_{K_3})$ corresponding to the standard representation of
$S_3$. This family arises naturally from the branched covering $Y\to
V$. However, we note that we may also form this family of
$L$-functions from the branched covering $X\to V$. Indeed, for an
$S_4$-quartic field $K_4$ with Galois closure $K_{24}$, we associate
to it the two-dimensional Artin representation $\Gal(K_{24}/\Q)\cong
S_4\to S_3\to\GL_2(\C)$, where $S_4\to S_3$ is the map in which we
quotient out by the subgroup generated by double transpositions. The
corresponding family of $L$-functions is the same as the family
associated to the branched cover $Y\to V$. This is because the field
in $K_{24}$ fixed by the double transpositions of $S_4$ is the
degree-$6$ Galois closure $K_6$ of the cubic resolvent $K_3$ of
$K_4$. Hence the Artin $L$-function corresponding to the
representation $\Gal(K_{24}/\Q)\cong S_4\to S_3\to\GL_2(\C)$ is the
same as the Artin $L$-function corresponding to the standard
representation $\Gal(K_{6}/\Q)\cong S_3\to\GL_2(\C)$.


As in the case of the family of quadratic resolvents of cubic fields,
cubic fields $K\in\FF$ arise with multiplicities. The following
theorem, due to Heilbronn \cite{heilbronn}, shows that the multiplicity of $K$
is $\#\Cl(K)[2]-1$:
\begin{theorem}\label{th:Heilbronn}
Let $K$ be a fixed cubic $S_3$-field. Then index-$2$ subgroups of
$\Cl(K)$ are in bijective correspondence with quartic fields that are
nowhere overramified and have $K$ as a cubic resolvent field.
\end{theorem}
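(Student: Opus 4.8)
\emph{Proof strategy.} The plan is to pass to Galois closures, reduce to abelian extensions, and finish with class field theory over $K$. Let $K_6$ be the Galois closure of $K$, so that $\Gal(K_6/\Q)\cong S_3$; write $K_6=K(\sqrt D)$ with $D=\Disc(K)$, and fix the order-two subgroup $\langle\tau\rangle\subset\Gal(K_6/\Q)$ with $K_6^{\langle\tau\rangle}=K$. If $K_4$ is a quartic $S_4$-field with Galois closure $K_{24}$, the normal Klein four-subgroup $V_4\triangleleft S_4$ of double transpositions satisfies $S_4=V_4\rtimes S_3$, the cubic resolvent of $K_4$ is the fixed field of a Sylow $2$-subgroup, and this resolvent is isomorphic to $K$ exactly when $K_{24}^{V_4}\cong K_6$. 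Under that identification $K_{24}/K_6$ is a $V_4$-extension on which $\Gal(K_6/\Q)=S_3$ acts through its standard faithful action on the three involutions of $V_4$; conversely, since $H^2(S_3,V_4)=0$ the relevant group extension of $S_3$ by $V_4$ must split, so every $V_4$-extension $N/K_6$ carrying that action has $\Gal(N/\Q)\cong S_4$ and is the Galois closure of a quartic field $K_4=N^{S_3}$ with cubic resolvent $K$. So the first step is to record this dictionary and to pin down the arithmetic condition on $K_4$ that matches ``$N/K_6$ is unramified at all finite primes''.

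The heart of the matter, and the step I expect to be the main obstacle, is the equivalence ``$K_4$ is nowhere overramified $\iff$ $K_{24}/K_6$ is unramified at every finite prime''. This is purely local and has to be checked prime by prime: one decomposes $\Q_p$ in the tower $\Q\subset\Q(\sqrt D),\,K\subset K_6\subset K_{24}$, using crucially that $K_6/K$ is the base change of $\Q(\sqrt D)/\Q$, and matches each conjugacy class of inertia-and-decomposition data at $p$ in $S_4$ with the splitting type of $p$ in $K_4$ on the one hand and the ramification of the primes of $K_6$ above $p$ in $K_{24}$ on the other. One then finds that the forbidden types $(1^21^2)$, $(2^2)$ and $(1^4)$ are precisely those for which some prime of $K_6$ ramifies in $K_{24}$ --- equivalently those for which $v_p(\Disc K_4)\geq 2$, so that ``nowhere overramified'' is the same as $\Disc K_4=\Disc K$. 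The one genuinely delicate case is $p=2$, where wild ramification in $\Q(\sqrt D)/\Q$ and in the relevant quadratic extensions of $K$ must be tracked; there I would use the conductor--discriminant formula together with the behaviour of conductors in a tower of quadratic extensions, or equivalently the explicit classification of \'etale quartic $\Q_2$-algebras together with their cubic resolvents.

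It remains to count and match by class field theory. By the dictionary of the first step and the local equivalence of the second, the quartic fields in question correspond to $V_4$-extensions of $K_6$ that are unramified at all finite primes and carry the standard $S_3$-action, hence (via the class group of $K_6$, using that such an extension is defined over $\Q$ iff the corresponding subgroup is $\Gal(K_6/\Q)$-stable) to $S_3$-equivariant surjections $\Cl(K_6)/2\Cl(K_6)\twoheadrightarrow V_4$; since $V_4$ is the natural absolutely irreducible $\F_2[S_3]$-module with $\mathrm{End}_{S_3}(V_4)=\F_2$, these are in bijection with the nonzero elements of the $\F_2$-vector space $\mathrm{Hom}_{S_3}(\Cl(K_6)/2\Cl(K_6),V_4)$. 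The final task is to identify this set naturally with the set of nonzero characters of $\Cl(K)/2\Cl(K)$, i.e.\ with the index-$2$ subgroups of $\Cl(K)$. I would extract this from the standard exact sequences attached to the quadratic extension $K_6/K$ --- the extension and norm maps on ideal classes, the ambiguous class number formula, and the Spiegelungssatz relating the $2$-ranks on the two sides --- exploiting that the restriction of $V_4$ to $\langle\tau\rangle$ is the free module $\F_2[\langle\tau\rangle]$, which makes the relevant $\langle\tau\rangle$-cohomology vanish and pins the $V_4$-isotypic part of $\Cl(K_6)/2$ down in terms of $\Cl(K)/2$; some care is needed here with ramification at the archimedean place when $K$ is complex. (Alternatively, the last step can be carried out on the side of Bhargava's parametrization, by analysing the $\SL_3(\Z)$-orbits of integral pairs of ternary quadratic forms with a fixed cubic resolvent form, which are parametrized by the relevant $2$-torsion data of $K$.)
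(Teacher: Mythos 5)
The paper does not actually prove this statement: it is quoted directly from Heilbronn's short note \cite{heilbronn}, whose argument works over $K$ itself, via the classical correspondence between quartic fields with resolvent $K$ and quadratic extensions $K(\sqrt{\delta})/K$ satisfying the norm condition $N_{K/\Q}(\delta)\in\Q^{\times 2}$ (up to squares), so there is no internal proof to compare against; I will therefore assess your sketch on its own merits.

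Your Galois-theoretic scaffolding is correct: the dictionary between quartic $S_4$-fields with resolvent $K$ and $V_4$-extensions $N/K_6$ carrying the standard $S_3$-action (with $H^2(S_3,V_4)=0$ guaranteeing the group is $S_4$) is fine, and your second step is in fact easier than you anticipate. A prime is overramified in $K_4$ exactly when an inertia subgroup of $\Gal(K_{24}/\Q)$ meets the normal Klein group $V_4$ nontrivially: if inertia contains a double transposition, every inertia orbit on the four points has even length, forcing splitting type $(1^21^2)$, $(2^2)$ or $(1^4)$, and conversely. This is a finite check on possible inertia subgroups and needs no delicate wild analysis at $2$.

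The genuine gap is in your final, class-field-theoretic step. Classifying the relevant $V_4$-extensions of $K_6$ by $S_3$-equivariant surjections $\Cl(K_6)/2\Cl(K_6)\twoheadrightarrow V_4$ uses the ordinary class group of $K_6$, hence only sees extensions unramified at the archimedean places as well; but when $K$ is totally real the nowhere overramified quartics with resolvent $K$ include fields of signature $(0,2)$ (their discriminant is still the positive number $\Disc(K)$), and for these $K_{24}$ is ramified over the real places of $K_6$. Your parenthetical caveat even targets the wrong case: when $K$ is complex, $K_6$ is totally complex and there is no archimedean issue at all. More fundamentally, the asserted identification $\mathrm{Hom}_{S_3}(\Cl(K_6)/2,V_4)\cong\mathrm{Hom}(\Cl(K)/2\Cl(K),\F_2)$ --- with the \emph{ordinary} class group of $K$ and with no correction terms coming from units, from the primes of $K$ ramified in $K_6/K$, or from the infinite places --- is precisely the nontrivial content of Heilbronn's theorem; naming the ambiguous class number formula and the Spiegelungssatz indicates plausible tools but does not carry out the bookkeeping that makes all of these contributions cancel. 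Until that step is actually executed (equivalently, in the classical approach, until one verifies that quadratic extensions of $K$ unramified at all finite primes and satisfying the norm condition are counted exactly by the index-$2$ subgroups of $\Cl(K)$), the proof is incomplete at its crux.
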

Therefore, it is possible to interpret $\FF$ as a family of weighted
cubic fields, where each cubic field $K$ is weighted by
$\#\Cl(K)[2]-1$. However, as before, we prefer to consider $\FF$ as a geometric
family.

\subsection{Symmetry type corresponding to this family of cubic fields}

We will need to relate the splitting type of an unramified prime $p$
in a nowhere overramified quartic field $Q$ to the splitting type of
$p$ in the cubic resolvent field $C$ of $Q$. This is done in the
following proposition:
\begin{proposition}\label{t:quartic}
Let $Q$ be a quartic order, and let $C$ be a cubic resolvent of
$Q$. Fix a prime $p$ that does not ramify in $Q$. The splitting type
of $p$ in $Q$ determines the splitting type of $p$ in $C$. Table
\ref{table} lists the different possible pairs of splitting types.
\end{proposition}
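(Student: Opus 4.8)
The plan is to reduce Proposition~\ref{t:quartic} to a finite computation over the conjugacy classes of $S_4$, using that the cubic resolvent is compatible with base change to $\Z_p$ and that, on an étale algebra, it is computed by the natural surjection $\pi\colon S_4\twoheadrightarrow S_3$.

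First I would localize at $p$. Since $p$ does not ramify in $Q$, the ring $Q\otimes\Z_p$ is étale over $\Z_p$, so $Q\otimes\Q_p$ corresponds to a continuous homomorphism $\phi\colon\Gal(\overline{\Q_p}/\Q_p)\to S_4$, well defined up to conjugacy, which factors through $\Gal(\Q_p^{\mathrm{ur}}/\Q_p)\simeq\widehat{\Z}$ and is therefore determined by the element $\sigma:=\phi(\Frob_p)$ up to conjugacy; by definition the splitting type of $p$ in $Q$ is the cycle type of $\sigma$, viewed as a partition of $4$. Next I would recall that $S_4$ acts on the three partitions of $\{1,2,3,4\}$ into two unordered pairs, that this action is transitive, and that its kernel is the Klein four-group $V_4$ generated by the double transpositions, so that it realizes the quotient $\pi\colon S_4\twoheadrightarrow S_4/V_4\simeq S_3$. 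The cubic resolvent construction commutes with the base change $\Z\to\Z_p$, and on a degree-$4$ étale algebra it returns the degree-$3$ étale algebra attached to $\pi\circ\phi$; concretely, by Theorem~\ref{thmparcubicres} the three roots of $4\det(Ax-By)$ are the points $(s:t)$ at which $sA+tB$ degenerates to a pair of lines, and such a degenerate member of the pencil splits the four points of the fiber over $(A,B)$ into the two pairs lying on its two lines, i.e.\ records one of the three pair-partitions. Since $\Delta(Q)=\Delta(C)$, the prime $p$ is unramified in $C$ as well, and the splitting type of $p$ in $C$ is the cycle type of $\pi(\sigma)$ acting on those three pair-partitions.

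It then remains to apply $\pi$ to each of the five conjugacy classes of $S_4$. The two cases worth isolating are the following: if $\sigma$ is a double transposition, so $p$ has type $(2^2)$ in $Q$, then $\sigma\in V_4=\ker\pi$, so $\pi(\sigma)=1$ and $p$ has type $(1^3)$ in $C$; and if $\sigma$ is a $4$-cycle, so $p$ has type $(4)$ in $Q$, then $\sigma^2$ is a double transposition and hence lies in $V_4$, which forces $\pi(\sigma)$ to have order $2$, so $p$ has type $(21)$ in $C$. The remaining three cases are immediate from the orders involved: the identity maps to the identity, a transposition to a transposition, and a $3$-cycle to a $3$-cycle, giving types $(1^3)$, $(21)$, $(3)$ in $C$ respectively. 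This exhausts the five unramified splitting types of $Q$ and reproduces the entries of Table~\ref{table}.

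I expect the only non-formal point to be the compatibility input used above: that $C\otimes\Z_p$ is a cubic resolvent of $Q\otimes\Z_p$ and that, on an étale algebra, forming the cubic resolvent is the operation $\phi\mapsto\pi\circ\phi$. For an $S_4$-field this is the classical resolvent cubic together with the identification of $K_6$ with the subfield of $K_{24}$ fixed by $V_4$ (already used in this section); in general it follows from Bhargava's construction of the pair $(Q,C)$, and it is encoded geometrically by the $\GL_2\times\SL_3$-equivariant incidence variety $Z\subset V\times\P^2\times\P^1$ introduced above, whose two projections realize the degree-$4$ covering $X\to V$ and the degree-$3$ covering $Y\to V$ in a way compatible with $\pi$. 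Once this is granted, an alternative to the group bookkeeping is to argue entirely on the geometric side: Frobenius permutes the four intersection points of $A$ and $B$, giving the splitting in $Q$, and correspondingly permutes the three degenerate members of the pencil $sA+tB$, giving the splitting in $C$, which yields the same table.
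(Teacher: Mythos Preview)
Your proof is correct and follows essentially the same approach as the paper. The paper's proof passes to the global fields $K_4=Q\otimes\Q$ and $K_3=C\otimes\Q$, observes that $p$ unramified in $K_4$ forces $p$ unramified in its Galois closure and hence in $K_3$, and then cites Wood's method to read off the splitting type in $K_3$ from the Frobenius class in $S_4$; your argument is the local version of exactly this, made explicit via the quotient $\pi\colon S_4\twoheadrightarrow S_4/V_4\simeq S_3$ acting on pair-partitions. The only substantive difference is that the paper works globally and uses that $K_4$ is a field (so that the Galois closure is an honest $S_4$-extension), whereas you work over $\Z_p$ with the \'etale-algebra parametrization and thereby avoid any irreducibility hypothesis on $Q\otimes\Q$; both routes reduce to the same five-line check on conjugacy classes, which you carried out correctly.
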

\begin{table}[ht]
\centering
\begin{tabular}{|c | c| c| }
  \hline &&\\[-8pt] Splitting type of $p$ in $Q$ & Splitting type of
  $p$ in $C$ & Density in $\FF$
\\[12pt] \hline &&\\[-8pt] $(1111)$&
  $(111)$& $\displaystyle\frac1{24}+O\Bigl(\frac{1}{p}\Bigr)$\\[12pt]
  $(22)$& $(111)$&
  $\displaystyle\frac1{8}+O\Bigl(\frac{1}{p}\Bigr)$\\[12pt] $(211)$&
  $(21)$& $\displaystyle\frac1{4}+O\Bigl(\frac{1}{p}\Bigr)$\\[12pt]
  $(4)$& $(21)$&
  $\displaystyle\frac1{4}+O\Bigl(\frac{1}{p}\Bigr)$\\[12pt] $(31)$&
  $(3)$& $\displaystyle\frac1{3}+O\Bigl(\frac{1}{p}\Bigr)$\\[12pt]
\hline
\end{tabular}
\caption{Densities of splitting types}\label{table}
\end{table}
\begin{proof}
Let $K_4$ and $K_3$ denote $Q\otimes_\Z\Q$ and $C\otimes_\Z \Q$,
respectively. Since $K_4$ is a quartic field and $Q$ is an order in $K_4$, we deduce that $K_3$ is also a field and $C$ is an order in $K_3$.  Since $p$ is unramified in $K_4$, it remains unramified in the Galois closure of $K_4$, and hence in $K_3$. The splitting types of $p$ in $K_4$ and $K_3$ are
the same as the splitting types of $p$ in $Q$ and $C$,
respectively. The theorem now follows by applying the method of
\cite{Wood}.
\end{proof}

The counting results of Theorem \ref{thquarticcount} imply the
analogue of \eqref{e:thmainsncount} with
$\delta_0=\delta_1=23/24+\epsilon$, $A=12$, and $B=11$.  The analogue
of \eqref{cptau} follows from Table \ref{table}
because $1/24+1/8=1/6$, $1/4+1/4=1/2$, and $1/3=1/3$. Thus $\FF$ is an
$S_3$-family in the sense that the Frobenius elements are uniformly
distributed in $\T_3$.

As in Section~\ref{s:setup}, if we define $\cL$ to be the average
conductor, then Theorem \ref{th:low-lying} follows for a Paley-Wiener
function $f$ whose Fourier transform has support in
$[-\alpha,\alpha]$, with $\alpha< \frac{1}{24}$.  Since
$\#\Cl(K)[2]-1$ is equal to the number of index-$2$ subgroups of
$\Cl(K)$, $\FF(x)$ can be viewed as a weighted set of Artin
$L$-functions arising from $S_3$-fields $K$, where each $S_3$-field is
counted with multiplicity $\#\Cl(K)[2]-1$. Therefore, in conjunction
with Section~\ref{s:families}, we deduce equidistribution results for
cubic fields counted with multiplicity $\#\Cl(K)[2]$.

The main ingredient that we use in order to consider these weighted
families (of quadratic fields $K$ weighted by $\#\Cl(K)[3]$ and of
cubic fields $L$ weighted by $\#\Cl(L)[2]$) is that these weighted
families can be parametrized in terms of integral orbits of reductive
groups on certain representations.  Let us also note that it is
possible to obtain analogous results for the families of quadratic and
cubic fields weighted by $\# \mathrm{Cl}(K)\cdot {\rm Reg}(K)$.  One
way to obtain such a result is to use geometric families that
parametrize quadratic and cubic fields, with these weights.  For
quadratic fields, we use the space of binary quadratic forms
modulo the $\SL_2$-action, and for cubic fields, we use the space
$\Z^2\otimes\Z^3\otimes\Z^3$ modulo the $\GL_2\times \SL_3\times
\SL_3$-action (see \cite{Bquintic}). Though integral orbits on these
representations parametrize simply the class groups of quadratic
(resp.\ cubic) fields, the fundamental domains that can be most
naturally constructed for these spaces weigh each quadratic
(resp.\ cubic) field $K$ by $\# \mathrm{Cl}(K)\cdot {\rm Reg}(K)$. The
latter construction can be seen in \cite{SiegelBQF} for the case of
quadratic fields and \cite[Chapter 2]{Wilson} for cubic fields.
Alternatively, we can use the fact that the Dirichlet class number
formula expresses this quantity as a residue at $s=1$ of $\zeta_K(s)$
which can be well approximated by a short Dirichlet polynomial.  On
the other hand arithmetic weights such as $L(\tfrac12,\chi_d)$ could
change the answer.


\section{$S_n$-families}\label{s:n-ary}
In this section, we consider the parametric family of monogenized
degree-$n$ number fields and prove Theorem~\ref{th:nic} concerning the
Sato-Tate equidistribution.  We will let $V\simeq \A^n$ denote the
space of monic polynomials of degree $n$.
The ring of functions of $V$ is $\Z[a_1,\ldots,a_n]$, which we can
identify via the fundamental theorem of algebra with
$\Z[x_1,\ldots,x_n]^{S_n}$.  Thus we can identify $V$ with the GIT
quotient $\A^n//S_n$, which simply amounts to factor the monic
polynomial
\[
f(T) = T^n + a_1 T^{n-1} +\cdots a_n = (T-x_1)(T-x_2)\cdots (T-x_n).
\]
Equivalently $V$ is the Hilbert scheme of $n$ points in $\A^1$.  The
ring of functions of the cartesian product $V\times \A^1$ is
$\Z[x_1,\ldots,x_n]^{S_n}[T]$.  The subscheme $X\subset V\times \A^1$
corresponding to the zero set of $f$ is defined by the principal ideal
generated by $(T-x_1)(T-x_2)\cdots (T-x_n)$.
\begin{lemma}
	There is a ring isomorphism between the quotient ring $\Z[x_1,\ldots,x_n]^{S_n}[T]/\langle(T-x_1)(T-x_2)\cdots (T-x_n)\rangle$, and $\Z[x_1,\ldots,x_n]^{S_{n-1}}$, induced by specializing $T\mapsto x_n$.
\end{lemma}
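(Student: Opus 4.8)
The plan is to write down the natural ring map, factor it through the quotient, and then verify bijectivity by comparing $A$-module bases, where $A:=\Z[x_1,\dots,x_n]^{S_n}$. View $S_{n-1}\subset S_n$ as the subgroup permuting $x_1,\dots,x_{n-1}$ and fixing $x_n$, so that $A\subseteq\Z[x_1,\dots,x_n]^{S_{n-1}}$. There is then a ring homomorphism $\phi\colon A[T]\to\Z[x_1,\dots,x_n]^{S_{n-1}}$ that restricts to the inclusion on $A$ and sends $T\mapsto x_n$. Since $f(T)=(T-x_1)\cdots(T-x_n)$ vanishes at $T=x_n$, we have $f\in\ker\phi$, so $\phi$ descends to $\bar\phi\colon A[T]/\langle f\rangle\to\Z[x_1,\dots,x_n]^{S_{n-1}}$; because $f$ is monic of degree $n$ in $T$ with coefficients in $A$, the source is a free $A$-module on the classes of $1,T,\dots,T^{n-1}$. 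It will then suffice to show that $\Z[x_1,\dots,x_n]^{S_{n-1}}$ is also free over $A$ on $1,x_n,\dots,x_n^{n-1}$, for then $\bar\phi$ sends a basis to a basis and is therefore an $A$-module, hence ring, isomorphism.

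For the spanning statement I would argue as follows. As $S_{n-1}$ fixes $x_n$, one has $\Z[x_1,\dots,x_n]^{S_{n-1}}=\Z[x_1,\dots,x_{n-1}]^{S_{n-1}}[x_n]$, and by the fundamental theorem of symmetric polynomials over $\Z$ the first factor is generated by the elementary symmetric polynomials $e_j'=e_j(x_1,\dots,x_{n-1})$, $1\le j\le n-1$. Equating coefficients in $(T-x_1)\cdots(T-x_n)=(T-x_n)(T-x_1)\cdots(T-x_{n-1})$ gives the recursion $e_j'=e_j-x_ne_{j-1}'$, with $e_0'=1$ and $e_j=e_j(x_1,\dots,x_n)$, so by induction each $e_j'$ lies in $A[x_n]$; equivalently, dividing $f(T)$ by the monic polynomial $T-x_n$ inside $A[x_n][T]$ and invoking uniqueness of division by a monic polynomial in $\Z[x_1,\dots,x_n][T]$ shows $(T-x_1)\cdots(T-x_{n-1})\in A[x_n][T]$. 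Hence $\Z[x_1,\dots,x_n]^{S_{n-1}}=A[x_n]$, and since $f(x_n)=0$ reduces every higher power of $x_n$, the ring $A[x_n]$ is spanned over $A$ by $1,x_n,\dots,x_n^{n-1}$.

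For linear independence over $A$: the ring $A=\Z[e_1,\dots,e_n]$ is a polynomial ring (the $e_i$ being algebraically independent), hence a domain sitting inside $E:=\Q(e_1,\dots,e_n)=\Q(x_1,\dots,x_n)^{S_n}$, so it is enough to check that $x_n$ has degree $n$ over $E$. Its minimal polynomial over $E$ has $S_n$-invariant coefficients, so it is fixed by $S_n$ and vanishes at $x_n$; applying $S_n$ it vanishes at each of the pairwise distinct elements $x_1,\dots,x_n$ and thus has degree $\ge n$, while $f$ itself exhibits degree $n$. Therefore $1,x_n,\dots,x_n^{n-1}$ are $E$-linearly independent, a fortiori $A$-linearly independent, and $\bar\phi$ is an isomorphism.

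Apart from the routine bookkeeping, the one point that needs a little care — the main obstacle — is the identity $\Z[x_1,\dots,x_n]^{S_{n-1}}=A[x_n]$, i.e.\ that the coefficients of $(T-x_1)\cdots(T-x_{n-1})$ are \emph{integer} polynomials in $e_1,\dots,e_n$ and $x_n$ rather than merely rational ones; the recursion above (or the monic-division argument) settles this directly over $\Z$ without any localization. Everything else is the standard ``separate off the last root'' computation realizing $X\simeq\A^n//S_{n-1}$.
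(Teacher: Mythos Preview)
Your proof is correct. The overall strategy is the same as the paper's---define the evaluation map, check it factors through the quotient, then verify bijectivity---but your execution is more explicit and organized differently. The paper dispatches injectivity in one sentence (``not difficult to verify'') and proves surjectivity by a lifting trick: given $g\in\Z[x_1,\dots,x_{n-1}]^{S_{n-1}}$, it writes $g=f(x_1,\dots,x_{n-1},0)$ for some $S_n$-invariant $f$ (citing Macdonald for this) and then exhibits a preimage in $A[T]$. You instead work module-theoretically, identifying both sides as free $A$-modules of rank $n$ on matching bases $1,T,\dots,T^{n-1}$ and $1,x_n,\dots,x_n^{n-1}$; your spanning argument via the recursion $e_j'=e_j-x_ne_{j-1}'$ makes the integrality over $\Z$ completely transparent, and your degree argument over the fraction field $E$ for linear independence fills in exactly what the paper leaves to the reader. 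What you gain is a fully self-contained argument with no external reference and no ambiguity about where the integer coefficients come from; what the paper's approach gains is brevity.
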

\begin{proof}
The map is clearly a ring homomorphism. It is not difficult to verify
that it is injective.  Since the image contains the polynomial $x_n$,
to prove surjectivity it suffices to show that the image contains the
subring $\Z[x_1,\ldots,x_{n-1}]^{S_{n-1}}$. Let
$g(x_1,\ldots,x_{n-1})$ be an $S_{n-1}$-invariant polynomial. Then
$g(x_1,\ldots,x_{n-1}) = f(x_1,\ldots,x_{n-1},0)$ for some
$S_n$-invariant polynomial $f$. This can be proved by using the
elementary symmetric polynomials, see
e.g.~\cite[\S1.1]{Macdonald:sym-orth}. Then $g$ is the image of
$f(x_1,\ldots,x_n-T)\in \Z[x_1,\ldots,x_n]^{S_n}[T]$.
\end{proof}

From this lemma, we deduce that the ring of functions of $X$ can be
identified with $\Z[x_1,\ldots,x_n]^{S_{n-1}}$. Equivalently $X$ is
the Hilbert scheme of $n$ points in $\A^1$, one of which is marked.
Similarly the Galois closure $\widetilde X$ of $X\to V$ is identified
with $\A^n$, which parametrizes $n$ marked points in $\A^1$, and with
ring of functions $\Z[x_1,\ldots,x_n]$.

In this section, the family
$\FF$ consists of the degree-$n$ fields corresponding to $\Z$-orbits
on $V(\Z)^\max$, the set of elements $f$ in $V(\Z)$ such that
$\Z[x]/f(x)$ is a maximal order in a degree-$n$ field.

\subsection{Monogenized fields arising from monic integer polynomials}
Recall from the introduction the notion of monogenized rings and
fields.
A polynomial $f(T)\in V(\Z)$ gives rise to the monogenized ring
$(\Z[T]/(f(T)),T)$. Conversely, a monogenized ring $(R,\alpha)$, where
$R$ has rank $n$ over $\Z$, gives rise to a polynomial $f\in V(\Z)$,
namely, the characteristic polynomial of $\alpha$. The group $\Z$
acts on $V(\Z)$ via the action $(m\cdot f)(T)=f(T+m)$. Since the
characteristic polynomial of $\alpha+m$ is $f(T-m)$, where $f$ is the
characteristic polynomial of $\alpha$, it follows that the isomorphism
classes of monogenized rank-$n$ rings are in bijection with the
$\Z$-orbits on $V(\Z)$.




In this section, we shall consider the family $\FF$ of degree-$n$
fields $K_f$ that arise as the fraction fields of \emph{maximal}
orders $R_f$ corresponding to $\Z$-orbits of integer monic degree-$n$
polynomials. These fields are said to be {\it monogenic}. This family
is distinct from the family of fields arising from \emph{all} orders
corresponding to integer monic degree-$n$ polynomials (see
\cite{LagariasWeiss}). The latter family would capture all
$S_n$-fields since every number field is generated by a single element
over $\Q$ and thus every number field is the field of fractions of
some (possibly non-maximal) order corresponding to an integer monic
degree-$n$ polynomial. Moreover, every degree-$n$ field arises in the
latter family infinitely often.

It is expected that for $n\geq 3$, most maximal orders (in fact,
most rings) are not monogenic. Thus, we expect that our family of
monogenic fields is thin in the full set of degree-$n$ fields, though
this is not known to be the case for any $n\geq 3$.  
For example, a cubic ring corresponding to the binary cubic form
$f$ under the Delone--Faddeev correspondence \cite{DF} is
monogenic if and only if $f$ represents $1$ over $\Z$.
So in the case $n=3$, the thinness of the family of monogenic
cubic rings reduces to the open question of showing that $100\%$ of
integral binary cubic forms do not represent $1$.


In the next subsection, we consider the family $\FF$ of monogenized
fields and define an appropriate height function on it.  We then
determine asymptotics for the number of monogenized fields having
prescribed splitting conditions at a fixed prime $p$, and use these
asymptotics to determine the symmetry type of the low-lying zeros of
the corresponding family of $L$-functions.

\subsection{Counting results}
Every $\Z$-orbit on $V(\Z)$ has a unique representative whose
$T^{n-1}$-coefficient is between $0$ and $n-1$. Let $V(\Z)_k$ denote
the set of monic integer polynomials whose $T^{n-1}$-coefficient is
$k$. Then the set $\bigsqcup_{k=0}^{n-1}V(\Z)_k$ is a set of orbit
representatives for the action of $\Z$ on $V(\Z)$. 


Let $f(T)=T^n+a_1T^{n-1}+\cdots+a_n$ be an element of $V(\Z)$. The
coefficients $a_i$ are the $i$th symmetric polynomials evaluated on
the roots of $f$. Hence we consider $a_i$ to be a degree-$i$ function
on $V$. The discriminant $\Delta$ is a degree $n(n-1)$ function on
$V$.  We then define the following height on $V(\R)$:
\begin{equation*}
h(T^n+a_1T^{n-1}+a_2T^{n-2}+\cdots+a_n)=\max_i\big\{|a_i|^{n(n-1)/i}\big\}.
\end{equation*}
For $x>0$, we consider the set of elements in $V(\R)_k$ (resp.\ $V(\Z)_k$) having height less than $x$. Then
\begin{equation*}
\begin{array}{rcl}
|\{f\in V(\Z)_k:\ h(f)< x \}|
&=&\Vol( \{f\in V(\R)_k:\ h(f)< x \})+O\big(x^{\frac{n+2}{2n}-\frac{2}{n(n-1)}}\big)\\[.2in]
&=&2^{n-1}x^{\frac{n+2}{2n}}+O\big(x^{\frac{n+2}{2n}-\frac{2}{n(n-1)}}\big).
\end{array}
\end{equation*}
See also~\cite{Deng} for the related count of rational points on weighted projective spaces.

An application of Hilbert irreducibility proves that $100\%$ of
elements $f$ in $V(\Z)$ yield $\Q$-algebras $K_f=\Q[T]/f(T)$ which are
$S_n$-number fields. 
Indeed, an application of the Selberg sieve yields the
upper bound \footnote{This bound is essentially due to Gallagher. See
  the recent article~\cite{Dietmann} for more general results via a
  different approach based on resolvent rings and the method of
  Bombieri-Pila for counting integer points on high degree curves.}
\begin{equation}\label{eqnsnmonic}
|\{
f\in V(\Z)_k:\ K_f \text{ not $S_n$-field},\ h(f)< x
\}|
=
O_{\epsilon}\big(x^{\frac{n+2}{2n}-\frac{2}{5n(n+1)}+\epsilon}\big).
\end{equation}


Next, we consider the subsets $V(\F_p)^{(\tau)}\subset V(\F_p)$ of
polynomials having splitting type $\tau\in\T_n$ and the subset
$V(\F_p)^{\Delta=0}\subset V(\F_p)$ of polynomials that have
discriminant $0$. As in \S3, we denote the set of elements in
$V(\Z_p)$ corresponding to maximal degree-$n$ extensions of $\Z_p$ by
$V(\Z_p)^\max$. We also let $V(\Z_p)^{p\mid\Delta}$ denote the set of
elements in $V(\Z_p)$ whose discriminants are divisible by $p$, and
let $V(\Z_p)^{p\mid\Delta,\max}$ denote $V(\Z_p)^\max\cap
V(\Z_p)^{p\mid\Delta}$.
\begin{lemma}\label{l:langweil}
Let $p$ be a prime such that $(p,n)=1$. Then we have
\begin{equation}
\begin{array}{rcccl}
\displaystyle
c_{p,\tau}&:=&\displaystyle\frac{|V(\F_p)^{(\tau)}|}{p^n}\cdot
\frac{1}{\Vol(V(\Z_p)^\max)}&=&
\displaystyle\frac{|\tau|}{|S_n|}+O\Bigl(\frac1{p}\Bigr),\\[.2in]
c_{p\mid\Delta}&:=&\displaystyle\frac{|V(\F_p)^{\Delta=0}|}{p^n}\cdot
\frac{\Vol(V(\Z_p)^{p\mid\Delta,\max})}
     {\Vol(V(\Z_p)^{p\mid\Delta})\Vol(V(\Z_p)^\max)}
     &=&O\displaystyle\Bigl(\frac1{p}\Bigr).
\end{array}
\end{equation}
\end{lemma}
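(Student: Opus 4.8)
The plan is to reduce the lemma to three elementary ingredients, none of which requires the sieve methods of \cite{BSW}: a count of $|V(\F_p)^{(\tau)}|$, the hypersurface bound $|V(\F_p)^{\Delta=0}|=O(p^{n-1})$, and a crude two-sided estimate for $\Vol(V(\Z_p)^\max)$. First I would record the relevant set-theoretic identities. If $\bar f\in V(\F_p)$ is squarefree then $\Delta(f)\in\Z_p^\times$, so $R_f=\Z_p[T]/f(T)$ has unit discriminant and is therefore maximal at $p$; conversely, if $R_f$ is maximal at $p$ and $p$ is unramified in $K_f$ then $R_f\otimes\F_p=\cO_{K_f}/p$ is reduced, forcing $\bar f$ squarefree. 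Hence $\{f\in V(\Z_p)^\max:\ p\text{ unramified}\}$ is exactly the reduction-mod-$p$ preimage of the squarefree locus, and within it the splitting type of $p$ is the factorization type of $\bar f$; so $\{f\in V(\Z_p)^\max:\ \text{splitting type }\tau\}$ is the preimage of $V(\F_p)^{(\tau)}$, of measure $|V(\F_p)^{(\tau)}|/p^n$. This is precisely the numerator in the displayed expression for $c_{p,\tau}$, so the first left-hand side is the honest conditional density of the splitting type $\tau$ given maximality at $p$. Likewise $V(\Z_p)^{p\mid\Delta}$ is the preimage of $V(\F_p)^{\Delta=0}$, so $\Vol(V(\Z_p)^{p\mid\Delta})=|V(\F_p)^{\Delta=0}|/p^n$, and the factor $|V(\F_p)^{\Delta=0}|/p^n$ then cancels in the second display, leaving simply $c_{p\mid\Delta}=\Vol(V(\Z_p)^{p\mid\Delta,\max})/\Vol(V(\Z_p)^\max)$.

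Next I would carry out the point counts. For $\tau\in\T_n$ with $m_d$ parts of size $d$ (so that $\sum_d d\,m_d=n$), an element of $V(\F_p)^{(\tau)}$ is the product of a choice, for each $d$, of $m_d$ distinct monic irreducibles of degree $d$ over $\F_p$. Since the number of monic irreducibles of degree $d$ is $\frac1d\sum_{e\mid d}\mu(e)p^{d/e}=\frac{p^d}{d}+O(p^{d/2})$, expanding the resulting product of binomial coefficients gives
\[
|V(\F_p)^{(\tau)}|=\Bigl(\prod_d \frac1{m_d!\,d^{m_d}}\Bigr)p^n+O(p^{n-1}),
\]
and the leading coefficient equals $|\tau|/|S_n|$ by the standard formula $|\tau|=n!/\prod_d m_d!\,d^{m_d}$ for the size of a conjugacy class in $S_n$; hence $|V(\F_p)^{(\tau)}|/p^n=|\tau|/|S_n|+O(1/p)$, with implied constant depending only on $n$. (The same leading term also comes from the Lang--Weil/Chebotarev count attached to the $S_n$-torsor $\widetilde X\to V$ restricted to $V^{\Delta\neq0}$, although that route only gives the weaker error $O(p^{-1/2})$.) For the ramified count, $\Delta$ is a nonzero polynomial function on $\A^n$, so $\{\Delta=0\}$ is a hypersurface and Schwartz--Zippel (or Lang--Weil) gives $|V(\F_p)^{\Delta=0}|\le(\deg\Delta)\,p^{n-1}$, hence $|V(\F_p)^{\Delta=0}|/p^n=O(1/p)$.

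Finally I would assemble everything. The squarefree locus of $V(\F_p)$ has measure $1-1/p$ and pulls back into $V(\Z_p)^\max$, while by the dichotomy of the first paragraph $V(\Z_p)^\max$ is the disjoint union of this squarefree part and $V(\Z_p)^{p\mid\Delta,\max}$; since the total measure is at most $1$ this yields both $\Vol(V(\Z_p)^{p\mid\Delta,\max})\le 1/p$ and $1-\tfrac1p\le\Vol(V(\Z_p)^\max)\le1$. (If one wanted the sharper $\Vol(V(\Z_p)^\max)=1+O(1/p^2)$ one would use that a non-maximal order $\Z_p[T]/f(T)$ forces $p^2\mid\Delta(f)$ and that the singular locus of $\{\Delta=0\}$ has codimension $\ge2$ in $\A^n$, but this is not needed here.) Combining, $c_{p,\tau}=\bigl(|\tau|/|S_n|+O(1/p)\bigr)\bigl(1+O(1/p)\bigr)=|\tau|/|S_n|+O(1/p)$ using $|\tau|/|S_n|\le1$, and $c_{p\mid\Delta}=\Vol(V(\Z_p)^{p\mid\Delta,\max})/\Vol(V(\Z_p)^\max)\le(1/p)/(1-1/p)=O(1/p)$. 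The only point requiring care is uniformity of the implied constants in $p$ (and in $\tau$, of which there are $|\T_n|$ many), which is clear above; for the finitely many small primes where the combinatorial count degenerates --- e.g.\ $p<n$, where $V(\F_p)^{(1^n)}=\emptyset$ --- every quantity in sight is trivially bounded and absorbed into a constant depending only on $n$, and the hypothesis $(p,n)=1$ enters only through the clean local maximality description, not the estimates. I do not expect a substantive obstacle: the content is entirely in correctly setting up the $p$-adic measure identities of the first paragraph and invoking standard point counting for the second.
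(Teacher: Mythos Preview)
Your proof is correct and follows essentially the same approach as the paper: both count $|V(\F_p)^{(\tau)}|$ combinatorially via factorization types (you phrase it as products of irreducibles, the paper as Galois orbits of points in $\A^1$, which is the same thing) and bound the $\Delta=0$ locus as a hypersurface. The only notable difference is that the paper invokes \cite{ABZ} to get the sharper $\Vol(V(\Z_p)^\max)^{-1}=1+O(1/p^2)$, whereas you observe that the cruder bound $1-\tfrac1p\le\Vol(V(\Z_p)^\max)\le 1$ coming from ``squarefree reduction $\Rightarrow$ maximal'' already suffices for the $O(1/p)$ conclusion, which is a small but genuine simplification.
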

\noindent As mentioned in~\cite[\S2.2]{SST} the lemma follows from the
analogue of the Chebotarev equidistribution for \'etale coverings which
can be established with the Lang-Weil bound. Below we give an
elementary proof.

\vspace{5pt}

\begin{proof}
A set $S$ of $n$ points in $\A^1(\overline{\F}_p)$ is said to be {\it
  defined over $\F_p$} if the set $S$ is fixed by the Galois group of
$\overline{\F}_p$ over $\F_p$. A monic degree-$n$ polynomial with
coefficients in $\F_p$ yields a set of $n$ points in
$\A^1(\overline{\F}_p)$ defined over $\F_p$, namely its
roots. Conversely, given a set of $n$ points in
$\A^1(\overline{\F}_p)$ defined over $\F_p$, it determines a unique
monic degree-$n$ polynomial with coefficients in $\F_p$.

The number of sets of $n$ points in $\A^1(\overline{\F}_p)$ defined
over $\F_p$ is $p^n$. If $\Delta(f)=0$ for $f\in V(\F_p)$, then
the corresponding set of $n$ points contains at least one point
counted with multiplicity greater than 1. The number of such sets is
$\sim p^{n-1}$ which proves the second part of the lemma.

Now consider an unramified splitting type
$\tau=(n)^{n_n}\ldots(2)^{n_2}(1)^{n_1}$, where $\sum j n_j=n$. If
$f\in V(\F_p)$ has splitting type $\tau$, then the corresponding set
of $n$ points consists of $n_1$ distinct points in $\A^1(\F_p)$, $n_2$
distinct pairs of conjugate points in
$\A^1(\F_{p^2})\backslash\A^1(\F_p)$, and so on. Up to an error term
of $O(p^{n_1-1})$, the number of sets of $n_1$ distinct points in
$\A^1(\F_p)$ is $p^{n_1}/n_1!$. Similarly, the number of sets of $n_k$
distinct $k$-tuples of conjugate points in
$$\A^1(\F_{p^k})\backslash\big(\bigcup_{\substack{d\mid k\\d\neq
    k}}\A^1(\F_{p^d})\big)$$ is $p^{kn_k}/(k^{n_k}\cdot
n_k!)+O(p^{kn_k-1})$, since the number of $k$-tuples of conjugate
points in $\A^1(\F_{p^k})$ is $p^{k}/k$.  Thus, the number of sets of
$n$ points in $\A^1(\overline{\F}_p)$ defined over $\F_p$
corresponding to the splitting type $\tau$ is equal to
\begin{equation}\label{eqlem511}
\frac{p^{n_1}}{n_1!}\frac{p^{2n_2}}{2^{n_2}\cdot
  n_2!}\cdots\frac{p^{kn_k}}{k^{n_k}\cdot n_k!}\cdots\frac{p^{n n_n}}{n^{n_n}\cdot
  n_n!}+O(p^{n-1})=\frac{p^n}{|\Stab_{S_n}(\tau)|}+O(p^{n-1}),
\end{equation}
where the equality follows since the cardinality of the stabilizer of $\tau$ in $S_n$ is
exactly equal to the denominator of the main term in the left-hand
side of the above equation.

Next, we note that conditions of maximality for $f\in V(\Z_p)$ are
listed in \cite[Corollary 3.2]{ABZ}. In particular, if $f\in V(\Z)$ is
nonmaximal, then either the reduction of $f$ modulo $p$ has a double
root $\alpha\in\F_p$ such that $p^2\mid f(\tilde{\alpha})$ for any
lift $\tilde{\alpha}\in\Z_p$ of $\alpha$, or $f$ has multiple repeated
roots. In either case it follows that $p^2\mid\Delta(f)$ for
nonmaximal elements $f\in V(\Z_p)$. Hence, we see that the volume of
the set of nonmaximal elements is bounded by $O(1/p^2)$. Therefore, we
obtain
\begin{equation}\label{eqlem512}
  \begin{array}{ccl}
    \displaystyle\frac{1}{\Vol(V(\Z_p)^\max)}&=&
    1+O\displaystyle\Bigl(\frac{1}{p^2}\Bigr),\\[.2in]
\displaystyle\frac{\Vol(V(\Z_p)^{p\mid\Delta,\max})}
     {\Vol(V(\Z_p)^{p\mid\Delta})\Vol(V(\Z_p)^\max)}
     &=&1+O\displaystyle\Bigl(\frac{1}{p}\Bigr).
  \end{array}
\end{equation}
The lemma follows from \eqref{eqlem511}, \eqref{eqlem512}, and the
orbit-stabilizer formula which gives
$|\Stab_{S_n}(\tau)||\tau|=|S_n|$.
\end{proof}


\begin{lemma}\label{lemABZ}
For any prime $p$, we have
$$
\rho(p):=\frac{\#V(\Z/p^2\Z)^\max}{\#V(\Z/p^2\Z)}=1-\frac{1}{p^2}.
$$
\end{lemma}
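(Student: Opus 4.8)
The plan is to detect $p$-maximality of the order $\Z_p[T]/(f)$ in terms of the reduction of $f$ modulo $p$ via Dedekind's criterion, reduce the count of $V(\Z/p^2\Z)^{\max}$ to a sum over $V(\F_p)$, and then evaluate that sum by an Euler product identity over $\F_p[T]$.

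First I would recall Dedekind's criterion over $\Z_p$ (see \cite[Corollary 3.2]{ABZ}): given $f\in V(\Z_p)$, factor its reduction as $\bar f=\prod_i\bar g_i^{\,e_i}$ with the $\bar g_i$ distinct monic irreducibles of $\F_p[T]$, set $\bar r:=\prod_{e_i\ge 2}\bar g_i$, choose monic lifts $G\equiv\prod_i\bar g_i$ and $H\equiv\prod_i\bar g_i^{\,e_i-1}\pmod p$, and write $f\equiv GH+pF\pmod{p^2}$ with $\deg F<n$ (which is possible since $f$ and $GH$ are both monic of degree $n$). Then $\Z_p[T]/(f)$ is maximal, i.e.\ $f\in V(\Z_p)^{\max}$, if and only if $\gcd(\bar F,\bar r)=1$ in $\F_p[T]$. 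In particular the condition depends only on $f\bmod p^2$, which legitimizes the notation $V(\Z/p^2\Z)^{\max}$ and the quantity $\rho(p)$, and it is valid for every prime $p$, including $p\mid n$ and $p=2$.

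Next I would count the maximal lifts of a fixed $\bar f\in V(\F_p)$ of degree $n$. There are exactly $p^n$ lifts $f\in V(\Z/p^2\Z)$ of $\bar f$, and since $f-GH$ has degree $<n$ and reduces to $0$ mod $p$, the assignment $f\mapsto\bar F:=\overline{(f-GH)/p}$ is an affine bijection from these lifts onto $\F_p[T]_{<n}:=\{h\in\F_p[T]:\deg h<n\}$. Because $\deg\bar r=\sum_{e_i\ge 2}\deg\bar g_i<\sum_i e_i\deg\bar g_i=n$ (with $\bar r=1$ when $\bar f$ is separable), reduction modulo $\bar r$ maps $\F_p[T]_{<n}$ onto $\F_p[T]/(\bar r)$ with all fibres of cardinality $p^{\,n-\deg\bar r}$, and $\gcd(\bar F,\bar r)=1$ selects exactly $\prod_{e_i\ge 2}(p^{\deg\bar g_i}-1)$ residues; hence the number of maximal lifts of $\bar f$ equals $p^{\,n-\deg\bar r}\prod_{e_i\ge 2}(p^{\deg\bar g_i}-1)=p^n\prod_{e_i\ge 2}(1-p^{-\deg\bar g_i})$. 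Summing over $\bar f$ gives $\#V(\Z/p^2\Z)^{\max}=p^n\,c_n$ with $c_n:=\sum_{\deg\bar f=n}\prod_{e_i\ge 2}(1-p^{-\deg\bar g_i})$.

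The final step is to evaluate $c_n$. The generating series $\sum_{n\ge 0}c_nx^n$ factors as an Euler product $\prod_{\bar g}L_{\deg\bar g}(x)$ over monic irreducibles $\bar g$ of $\F_p[T]$, where for $\delta=\deg\bar g$ the local factor is $L_\delta(x)=1+x^\delta+(1-p^{-\delta})\sum_{e\ge 2}x^{e\delta}=\dfrac{1-p^{-\delta}x^{2\delta}}{1-x^\delta}$ by an elementary simplification. Using the identity $\prod_{\bar g}(1-u^{\deg\bar g})^{-1}=(1-pu)^{-1}$ (the generating function counting monic polynomials of $\F_p[T]$) once with $u=x$ and once with $u=x^2/p$, I obtain
\[
\sum_{n\ge 0}c_nx^n=\frac{\prod_{\bar g}\bigl(1-p^{-\deg\bar g}x^{2\deg\bar g}\bigr)}{\prod_{\bar g}\bigl(1-x^{\deg\bar g}\bigr)}=\frac{1-x^2}{1-px},
\]
so $c_n=p^n-p^{n-2}$ for $n\ge 2$, whence $\#V(\Z/p^2\Z)^{\max}=p^{2n}-p^{2n-2}=p^{2n}(1-p^{-2})$ and, dividing by $\#V(\Z/p^2\Z)=p^{2n}$, $\rho(p)=1-p^{-2}$. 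The computation is short; the only points that genuinely need care are pinning down Dedekind's criterion over $\Z_p$ (in particular that it depends only on $f\bmod p^2$ and holds for all $p$), verifying that $\deg(f-GH)<n$ so that $\bar F$ really sweeps out all of $\F_p[T]_{<n}$, and recognizing the numerator Euler product as $1-x^2$ after the substitution $u=x^2/p$.
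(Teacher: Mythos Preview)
Your proof is correct. The paper does not actually prove this lemma: its entire argument is the one-line citation ``This is \cite[Proposition 3.5]{ABZ} combined with \cite[Corollary 3.2]{ABZ}.'' You have instead written out a self-contained proof, taking the Dedekind/ABZ maximality criterion as input and then carrying out the density computation directly via a clean Euler-product identity over $\F_p[T]$; this is more than the paper provides, and the generating-function evaluation $\sum_n c_n x^n=(1-x^2)/(1-px)$ is an elegant way to package the count.
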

\begin{proof}
This is \cite[Proposition 3.5]{ABZ} combined with~\cite[Corollary 3.2]{ABZ}.
\end{proof}

The asymptotics for the number of elements in $V(\Z)$ having bounded
height and squarefree discriminant is computed in \cite{BSW}. The key
ingredient in that result is the following ``tail estimate'' proved
in \cite[Theorem 1.5]{BSW}:
\begin{equation}\label{equnifmonic1}
\sum_{\substack{m>M\\\mu^2(m)=1}} |\{f\in V(\Z):\ m^2|\Delta(f),\ h(f)<x
\}|=O_\epsilon\big(x^{\frac{n+1}{2n-2}+\epsilon}/M\big)+O_\epsilon
\big(x^{\frac{n+1}{2n-2}-\frac{1}{5n(n-1)}+\epsilon}\big).
\end{equation}

Recall that $\FF$ denotes the family of $\Z$-orbits in $V(\Z)^{\max}$. Thus for $x\ge 1$, $\FF(x)$ is in bijection with the set of monogenized fields in $\FF$ arising from irreducible integer monic polynomials having height bounded by $x$.
Let $\FF^{p,\tau}(x)$ and $\FF^{p\mid\Delta}(x)$ correspond to the set of
fields $K$ in $\FF(x)$ such that the splitting type of $p$ in $K$ is
$\tau$ and such that $p\mid\Delta(K)$, respectively.  
Using arguments identical to those in \cite{BSW}, we
estimate the number of elements in $\FF(x)$, $\FF^{p,\tau}(x)$,
and $\FF^{p\mid\Delta}(x)$.
\begin{theorem}\label{thmonnmt}
Let $c_{p,\tau}$ and $ c_{p\mid\Delta}$ be as in Lemma
\ref{l:langweil}. We have
\begin{equation}\label{eqpolymaincount}
\begin{array}{rcl}
  \displaystyle |\FF(x)|&=&\displaystyle \frac{2^{n-1}n}{\zeta(2)}x^{\frac{n+2}{2n}}+O_\epsilon\big(x^{\frac{n+2}{2n}-\frac1{5n(n-1)}+\epsilon}\big),\\[.2in]
	\displaystyle
        |\FF^{p,\tau}(x)|&=&\displaystyle c_{p,\tau}|\FF(x)|+O_{\epsilon}\big(x^{\frac{n+2}{2n}-\frac1{5n(n-1)}+\epsilon}
        p^{n}\big),\\[.2in] \displaystyle
        |\FF^{p\mid\Delta}(x)|&=&\displaystyle
        c_{p\mid\Delta}|\FF(x)|+O_{\epsilon}\big(x^{\frac{n+2}{2n}-\frac1{5n(n-1)}+\epsilon}
        p^{n-1}\big).
\end{array}
\end{equation}
\end{theorem}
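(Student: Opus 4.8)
The plan is to run the square-free sieve of Bhargava--Shankar--Wang \cite{BSW}, but in the fundamental slab $S:=\bigsqcup_{k=0}^{n-1}V(\Z)_k$ of representatives for the $\Z$-action, and sieving for maximality rather than for square-freeness of the discriminant. Since the representatives in $S$ have bounded $T^{n-1}$-coefficient, the region $\{f\in S:h(f)<x\}$ is, in the coordinates $a_2,\dots,a_n$, essentially a box, so the displayed volume estimate summed over $k=0,\dots,n-1$ gives the starting count $|\{f\in S:h(f)<x\}|=n\,2^{n-1}x^{(n+2)/(2n)}+O\bigl(x^{(n+2)/(2n)-2/(n(n-1))}\bigr)$. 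For a square-free integer $m$, the set of $f\in S$ that are non-maximal at every prime dividing $m$ is a union of residue classes modulo $m^{2}$ of relative density $\prod_{p\mid m}p^{-2}=m^{-2}$, by Lemma~\ref{lemABZ}, \cite[Cor.~3.2]{ABZ}, and the Chinese remainder theorem. Inclusion--exclusion over the events ``$f$ is non-maximal at $p$'' gives $|\FF(x)|=\sum_{m\ge1}\mu(m)\,|\{f\in S:\ h(f)<x,\ f\ \text{non-maximal at all }p\mid m\}|-R(x)$, where $R(x)$ counts the reducible maximal-everywhere $f$ in $S$ (these define no field); since reducible monic integer polynomials of height $<x$ have all roots $\ll x^{1/(n(n-1))}$, so that their monic factors lie in small boxes, $R(x)=O_\epsilon(x^{1/2+\epsilon})$ and is absorbed into the remainder.

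I would then split the sieve at a parameter $M=M(x)$. In the range $m\le M$, the count of $f\in S$ of height $<x$ in a fixed residue class modulo $m^{2}$ is evaluated by a lattice-point count in the box above, giving the main term $m^{-2}n\,2^{n-1}x^{(n+2)/(2n)}$ with an error polynomial in $m$ that saves the power of $x$ coming from the shortest side of the box (of length $\asymp x^{2/(n(n-1))}$). Summing over $m\le M$ produces the main term $n\,2^{n-1}x^{(n+2)/(2n)}\sum_{m\ge1}\mu(m)m^{-2}=\frac{2^{n-1}n}{\zeta(2)}x^{(n+2)/(2n)}$, an error growing in $M$, and a tail $O(x^{(n+2)/(2n)+\epsilon}/M)$. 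In the range $m>M$, non-maximality at all $p\mid m$ forces $m^{2}\mid\Delta(f)$, so this contribution is at most $\sum_{m>M,\ \mu^{2}(m)=1}|\{f\in S:\ m^{2}\mid\Delta(f),\ h(f)<x\}|$; passing from $V(\Z)$ to the slab $S$ costs a factor $x^{-1/(n(n-1))}$, since each $\Z$-orbit has $\asymp x^{1/(n(n-1))}$ representatives of height $<x$ (translating $T\mapsto T-j$ keeps the height $\ll x$ for $|j|\ll x^{1/(n(n-1))}$), so \eqref{equnifmonic1} bounds it by $O_\epsilon\bigl(x^{(n+2)/(2n)+\epsilon}/M+x^{(n+2)/(2n)-1/(5n(n-1))+\epsilon}\bigr)$. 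Balancing the two ranges in $M$ yields the first line of \eqref{eqpolymaincount}.

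For $|\FF^{p,\tau}(x)|$ and $|\FF^{p\mid\Delta}(x)|$ I would rerun the argument after imposing one further congruence at $p$: for $\FF^{p,\tau}$ the condition $\bar f\in V(\F_p)^{(\tau)}$, which for unramified $\tau$ already forces maximality at $p$ (so only the primes $q\ne p$ are sieved), and for $\FF^{p\mid\Delta}$ the condition $f\bmod p^{2}\in V(\Z/p^{2}\Z)^{p\mid\Delta,\max}$. The main terms come out as $c_{p,\tau}|\FF(x)|$ and $c_{p\mid\Delta}|\FF(x)|$: since maximality at $p$ is a mod-$p^{2}$ condition one has $\Vol(V(\Z_p)^{\max})=1-p^{-2}$, and restricting the sieve to $m$ coprime to $p$ replaces $\sum_{m}\mu(m)m^{-2}=1/\zeta(2)$ by $\bigl(\zeta(2)(1-p^{-2})\bigr)^{-1}$, the extra factor $1/(1-p^{-2})=1/\Vol(V(\Z_p)^{\max})$ being exactly the normalization appearing in Lemma~\ref{l:langweil} (and similarly in the ramified case, using $\Vol(V(\Z_p)^{p\mid\Delta})=|V(\F_p)^{\Delta=0}|/p^{n}$). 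The only genuinely new point is the $p$-dependence of the $m\le M$ error: imposing the congruence at $p$ inflates the box lattice-point error by $O(p^{n})$ for the splitting-type count and by $O(p^{n-1})$ for the ramified count --- one power of $p$ less, reflecting that the ramified condition has density $\asymp 1/p$ --- while the $m>M$ tail is unaffected by this bounded congruence at $p$; rebalancing in $M$ produces the error terms $O_\epsilon(x^{(n+2)/(2n)-1/(5n(n-1))+\epsilon}p^{n})$ and $O_\epsilon(x^{(n+2)/(2n)-1/(5n(n-1))+\epsilon}p^{n-1})$ recorded in \eqref{eqpolymaincount}. The finitely many primes $p\mid n$ not covered by Lemma~\ref{l:langweil} are handled in the same way with the appropriate $p$-adic densities as the constants.

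The step I expect to be the main obstacle is precisely this last one: obtaining the remainder bounds uniformly in $p$. Everything else is the Bhargava--Shankar--Wang square-free sieve carried out in the slab, together with the local density computations of Lemmas~\ref{lemABZ} and~\ref{l:langweil}; the real work lies in checking that prescribing a splitting type at $p$ --- i.e.\ counting integer points in the height-box subject to a congruence whose modulus is divisible by $p$ (or $p^{2}$) and by $m^{2}$ --- degrades the error by no more than the stated power of $p$, uniformly over all primes $p$ and all $m\le M$. This uniformity is exactly what is needed to feed \eqref{eqpolymaincount} into Theorem~\ref{th:low-lying}.
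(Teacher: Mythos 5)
Your proposal is correct and follows essentially the same route as the paper's proof: counting in the slab of $\Z$-orbit representatives, sieving for maximality via the mod-$m^2$ densities of Lemma~\ref{lemABZ}, transferring the tail estimate \eqref{equnifmonic1} to the slab using translation invariance of the discriminant (costing the factor $x^{-1/(n(n-1))}$), and deducing the refined counts by imposing the congruence at $p$ and counting points in the relevant residue classes, which produces the unoptimized $p^{n}$ and $p^{n-1}$ losses exactly as in the paper. The only cosmetic divergence is that you remove reducible polynomials by a direct factorization/box bound rather than through the Hilbert-irreducibility estimate \eqref{eqnsnmonic} invoked in the paper; both suffice since the theorem only requires irreducibility.
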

\begin{proof}
We start by computing the number of elements $f\in V(\Z)_k$ having
height less than $x$. The coefficients $a_i$ of such an $f$ are as
follows: $a_1=k$ and $|a_i|< x^{i/(n(n-1))}$ for $2\leq i\leq n$. Thus
there are a total of $\sim 2^{n-1}x^{\delta}$ such elements $f$, where
$\delta=(n(n+1)/2-1)/(n(n-1))=(n+2)/(2n)$.

For squarefree positive $m$, let $\U_{k,m}(x)$ denote the set of
elements $f\in V(\Z)_k$ such that $R_f$ is nonmaximal at every prime
dividing $m$ and $h(f)<x$.  Let $\U_m(x)$ denote
$\bigsqcup_{k=0}^{n-1}\U_{k,m}(x)$.  Note that if $R_f$ is nonmaximal
at a prime $p$, then $p^2\mid\Delta(f)$.  Since the discriminant of
$f(T)$ is equal to the discriminant of $f(T+a)$ for integers $a$,
\eqref{equnifmonic1} immediately implies the following estimate:
\begin{equation}\label{equnifmonic}
\sum_{\substack{m>M\\\mu^2(m)=1}} |\U_{m}(x)|=O_\epsilon\big(x^{\frac{n+2}{2n}+\epsilon}/M\big)+O_\epsilon\big(x^{\frac{n+2}{2n}-\frac{1}{5n(n-1)}+\epsilon}\big).
\end{equation}
(We exclude the factors of $n$ in the error terms since $n$ is assumed to be fixed.)

The set $\U_{k,m}$ is defined via congruence conditions modulo $m^2$.
Let $V(\Z/m^2\Z)_k^\nmax$ denote the set of elements whose lifts to
$V(\Z)_k$ are nonmaximal at every prime dividing $m$. Then for
\begin{equation*}
\rho_k(m):=\frac{\#V(\Z/m^2\Z)_k^\nmax}{\#V(\Z/m^2\Z)_k},
\end{equation*}
we have
\begin{equation*}
|\U_{k,m}(x)|=\rho_k(m)2^{n-1}x^{\frac{n+2}{2n}}+O\big(x^{\frac{n+2}{2n}-\frac{2}{n(n-1)}}\big).
\end{equation*}


Since the condition of maximality (and hence of nonmaximality) is
$\Z$-invariant, it follows that the density of nonmaximal elements in
$\bigsqcup_{k=0}^{n-1} V(\Z/m^2\Z)_k$ is equal to the density of
nonmaximal elements in $V(\Z/m^2\Z)$.  Furthermore, the size of
$V(\Z/m^2\Z)_k$ is equal to $m^{2n-2}$ independent of $k$. It
therefore follows that the average of $\rho_k(m)$ is equal to the
density of nonmaximal elements in $\bigsqcup_{k=0}^{n-1}
V(\Z/m^2\Z)_k$, and can be computed from Lemma \ref{lemABZ} using
the multiplicativity over $m$ of this density:
\begin{equation*}\frac{1}{n}\sum_{k=0}^{n-1}\rho_k(m)=\prod_{p\mid m}1/p^2=1/m^2.
\end{equation*}
Therefore, from \eqref{eqnsnmonic} and \eqref{equnifmonic}, we have for $\delta>0$,
\begin{equation*}
\begin{array}{rcl}
|\FF(x)|&=&\displaystyle\sum_{m\geq 1}\mu(m) |\U_m(x)|+O_{\epsilon}\big(x^{\frac{n+2}{2n}-\frac{2}{5n(n+1)}+\epsilon}\big)\\[.2in]
&=&\displaystyle\sum_{k=0}^{n-1}\sum_{m\geq 1}\mu(m)|\U_{k,m}(x)|
+O_{\epsilon}\big(x^{\frac{n+2}{2n}-\frac{2}{5n(n+1)}+\epsilon}\big)\\[.2in]
&=&\displaystyle\sum_{k=0}^{n-1}\sum_{m=1}^{x^{\delta}}\mu(m)\rho_k(m)2^{n-1}x^{\frac{n+2}{2n}}+O\Bigl(\sum_{m=1}^{x^{\delta}}x^{\frac{n+2}{2n}-\frac{2}{n(n-1)}}\Bigr)
+O_\epsilon\Bigl(x^{\frac{n+2}{2n}-\delta+\epsilon}+x^{\frac{n+2}{2n}-\frac1{5n(n-1)}+\epsilon}\Bigr)\\[.2in]
&=&\displaystyle \frac{2^{n-1}n}{\zeta(2)}x^{\frac{n+2}{2n}}+O_\epsilon\Bigl(
x^{\frac{n+2}{2n}-\delta}+x^{\frac{n+2}{2n}-\frac{2}{n(n-1)}+\delta}+
x^{\frac{n+2}{2n}-\delta+\epsilon}+x^{\frac{n+2}{2n}-\frac1{5n(n-1)}+\epsilon}\Bigr).
\end{array}
\end{equation*}
We pick $\delta=1/n(n-1)$ and obtain the first estimate of the
theorem. The proof of the other two estimates are identical. We simply
count points\footnote{Here we have chosen not to optimize the exponent
  of $p$ in the error terms; using the methods in \cite{EPW}
  would yield significantly improved error bounds.} in the translates of $pV(\Z)$
corresponding to Lemma \ref{l:langweil}.
\end{proof}


This concludes the proof of the Sato-Tate equidistribution for this
family of monogenized degree-$n$ fields. Therefore, by the results of
\S2 in conjunction with Lemma \ref{l:langweil} and Theorem
\ref{thmonnmt}, we see that the symmetry type of the family is
symplectic and that the bound on the support given by Theorem
\ref{th:low-lying} is $\alpha<2/(5n(n-1)(2n+1))$.

\subsection{Fields arising from binary $n$-ic forms }
Let $W=\Sym^n(2)$ denote the space of all binary $n$-ic forms.  A
construction of Nakagawa \cite{Nakagawa} attaches a degree-$n$ ring
$R_f$ to a nondegenerate integral binary $n$-ic form $f$.  The
following geometric construction of $R_f$ is due to Wood \cite[Theorem
  2.4]{Woodbnf}: to an integral binary $n$-ic form $f\in W(\Z)$, we
associate its scheme $X_f$ of zeros and the ring $R_f$ of regular
functions on $X_f$.  This produces a quasiprojective scheme $X\subset
W\times\P^1$ which is also a branched covering $X\to W$ of degree
$n$. We may consider the family of fields arising from integral binary
$n$-ic forms that correspond to maximal orders in $S_n$-fields. This
yields a family of $L$-functions as before. The Sato-Tate
equidistribution for this family would follow in identical fashion
from a tail estimate, analogous to~\eqref{equnifmonic1} but for binary
$n$-ic forms $f\in W(\Z)$ such that $m^2 | \Delta(f)$.

\section{Mixed families}\label{s:mixed}

In this section, we consider geometric families that are {\it mixed},
i.e., the fields yielding the $L$-functions in the families do not all
have the same Galois group. However, each family we consider can be
naturally partitioned into disjoint subfamilies where the Galois group
is constant. The Sato-Tate group of the subfamily is then equal to
this Galois group embedded into the torus of some $\GL_n$. The
Sato-Tate group of the mixed family is just the subgroup of the torus
generated by the Sato-Tate groups of all the subfamilies. We thus
verify part (ii) of Conjecture 1
in~\cite{SST}.

\subsection{Binary cubic forms and $S_2$-, $C_3$- and 
$S_3$-fields}\label{mixed cubic}

Let $\FF$ denote the family of \'etale cubic extensions of $\Q$
arising as $R\otimes \Q$, where $R$ corresponds to $\GL_2(\Z)$-orbits
on the set of maximal integral binary cubic forms that have nonzero
discriminant and do not factor as the product of three linear forms
over $\Q$.  (We omit the cases of $\Q\oplus \Q\oplus \Q$ which
corresponds to binary cubic forms which split completely over $\Q$.)
We have a disjoint decomposition $\FF = \FF_C \sqcup \FF_S \sqcup
\FF_Z$, where the family $\FF_C$ corresponds to $C_3$-fields, the
family $\FF_S$ corresponds to $S_3$-fields (and has been studied in
Section~\ref{s:families}), and the family $\FF_Z$ corresponds to the
direct sums $K\oplus \Q$ of quadratic fields $K$ and $\Q$.

The Sato-Tate group of $\FF_S$ is $S_3\subset\GL_2(\C)$ embedded via
the standard representation of $S_3$. For $\FF_C$ the Sato-Tate group
is $C_3\subset S_3 \subset \GL_2(\C)$, and for $\FF_Z$ the Sato-Tate
group is $S_2\subset S_3\subset \GL_2(\C)$. 
Recall that $\TT = (S^1)^2/S_2$ consists of pairs of unit complex numbers modulo
permutation of the two coordinates.
We shall use the notation $\delta(a,b)$, where $a,b\in S^1$,
to denote the Dirac delta measure supported at the point $(a,b)\in \TT$. Let
$\rho$ denote a nontrivial cube root of unity, and let $\bar{\rho}$
denote the complex conjugate of $\rho$. Then the Sato-Tate measures
and the indicators $i_1$, $i_2$, and $i_3$ for the families $\FF_C$,
$\FF_S$, and $\FF_Z$ are listed in Table \ref{tableSTcubic}.

\begin{table}[hbt]\label{t:mixed3}
\centering
\begin{tabular}{|c | c|c|c|c|}
  \hline &\\[-12pt]
\multirow{2}{*}{Family}&\multirow{2}{*}{Sato-Tate measure} &\multicolumn{3}{c|}{Indicators}\\ \cline{3-5}
&&$i_1$&$i_2$&$i_3$\\
\hline
  $\FF_C$&$\frac{1}{3}\delta(1,1)+\frac{2}{3}\delta(\rho,\overline{\rho})$
&2&2&0
\\[.2in]
$\FF_Z$&$\frac{1}{2}\delta(1,1)+\frac{1}{2}\delta(1,-1)$&2&2&2
\\[.2in]
$\FF_S$&$\frac{1}{6}\delta(1,1)+\frac{1}{2}\delta(1,-1)+\frac{1}{3}\delta(\rho,\overline{\rho})$&1&1&1
\\
\hline
\end{tabular}
\caption{Sato-Tate measures and the corresponding indicators for the families in \S\ref{mixed cubic}.}\label{tableSTcubic}
\end{table}

Note that $S_2$ and $C_3$ do not act irreducibly on $\C^2$; as a
consequence $i_1(\FF_Z)=i_1(\FF_C)=2$. This is apparent since the
$L$-function attached to $K\oplus \Q\in \FF_Z$ is
$\zeta_K(s)=\zeta(s)L(s,\chi)$, where $\chi$ is a Dirichlet
character. Similarly, the $L$-function corresponding to a cyclic cubic
field in $\FF_C$ is a product of two Dirichlet $L$-functions
$L(s,\chi)L(s,\overline{\chi})$, where $\chi$ is a cubic
character. The family of cubic character $L$-functions $L(s,\chi)$ is itself of
unitary symmetry type since it has Sato-Tate group $C_3\subset \GL_1(\C)$. It
has been studied for example in~\cite{David:cubic}.

It is a consequence of the work of Davenport--Heilbronn that the
families $\FF_S$ and $\FF_Z$ occur with positive proportion $c_S$ and
$c_Z$ inside $\FF$, while $\FF_C$ has zero proportion.
By construction the Sato-Tate measure of the family
$\FF$ is
\[
\mu_{\rm ST}(\FF) = c_S \cdot \mu_{\rm ST}(\FF_S) + c_Z \cdot \mu_{\rm ST}(\FF_Z).
\]
The indicators are easily calculated and in fact Table~\ref{tableSTcubic}
shows that all three are equal to $c_S+2c_Z$. Similarly, the
statistics of the low-lying zeros of $\FF$ are simply the
superposition of those of the family $\FF_S$ of $S_3$-fields and the
family $\FF_Z$ of $S_2$-fields, weighted with the respective
proportions $c_S$ and $c_Z$.

\subsection{Pairs of ternary quadratic forms and $S_4$- and $D_4$-fields}\label{sub:D4}

Recall that $\GL_2(\Z)\times\SL_3(\Z)$-orbits on the space of pairs of
integral ternary quadratic forms correspond bijectively to isomorphism
classes of pairs $(Q,R)$, where $Q$ is a quartic ring and $R$ is a
cubic resolvent ring of $Q$. We may use this parametrization to
construct a mixed family $\FF$ of $S_4$ and $D_4$ quartic fields.
Given a pair $(A,B)$ of integral ternary quadratic forms given in
Gram-matrix form and corresponding to a pair of rings $(Q,R)$ as
above, the cubic resolvent form is defined to be
$f(x,y)=4\det(Ax-By)$. Then, under the Delone--Faddeev correspondence
\cite{DF}, the integral binary cubic form $f$ corresponds to the cubic
resolvent ring $R$ of $Q$ (see \cite{Bquartic}). In particular, if $Q$
is an $S_4$-, $D_4$-, or $A_4$-ring, then $R$ is an $S_3$-ring, an
order contained in a direct sum of $\Q$ and a quadratic field, or a
$C_3$-ring, respectively.

When quartic fields are ordered by discriminant, $D_4$-fields occur
with a positive proportion. This is related to the shape of a
fundamental domain for the $\GL_2(\Z)\times \SL_3(\Z)$-action on
$V(\R)$, and the presence of ``cusps''. More precisely, one of the
cusps of this fundamental domain contains only integral elements
$(A,B)$ with $\det(A)=0$ (see Case II in the proof of \cite[Lemma
  11]{B1}). This implies that if the corresponding quartic ring is
nondegenerate, then it is either a $D_4$-ring or an order contained
within the direct sum of two quadratic fields. A $100\%$ of
$D_4$-rings are contained in this cusp, and they make up a positive
proportion of irreducible quartic rings.

To construct a geometric family of quartic fields that are either
$S_4$ or $D_4$, we must restrict to pairs of integral ternary
quadratic forms that have nonzero discriminant and are
maximal. Furthermore, in order to exclude $A_4$-rings and reducible
rings, we impose the condition that the prime 2 stays inert. That is,
we consider the family of fields $\FF$ arising as the field of
fractions of rings corresponding to the set of
$\GL_2(\Z)\times\SL_3(\Z)$-orbits on maximal nondegenerate elements of
$V(\Z)$ whose splitting type at 2 is $(4)$.  We then have the
decomposition $\FF = \FF_S \sqcup \FF_D$, where $\FF_S$ consists of
$S_4$-fields and $\FF_D$ consists of $D_4$-fields. This is one
instance where one can perform rigorously the decomposition alluded to
in the remarks concerning assertion (ii) of Conjecture~1
in~\cite{SST}.

Quartic $D_4$-orders may also be distinguished from $S_4$-orders using
the property that a cubic resolvent of a $D_4$-order is a suborder of
$\Q\oplus K$, where $K$ is a quadratic field. In contrast, the cubic
resolvent of an $S_4$-order is an order in an $S_3$-cubic
field. Exploiting this difference, Wood \cite[Section 7.3]{WoodThesis}
parametrizes quartic rings with reducible resolvent rings as
$G$-orbits, where $G$ is a subgroup of $\GL_2(\Z)\times \GL_3(\Z)$, on
the space of pairs $(A,B)$ of integral ternary quadratic forms such
that $A$ has top row zero and $a_{12}\neq 0$. The quartic rings that
arise include all $D_4$-orders but no $S_4$-orders.  Let $U$ be the
space of quadruples $(A,B,x,y)$ of two integral ternary quadratic
forms $A$ and $B$ as above and two integers $x$ and $y$ such that
$\det(Ax - By)=0$. Geometrically the cusp containing $D_4$-fields
arises from the natural equivariant map $U \to V$. Indeed, the common
zero locus of $A$ and $B$ in $\P^2$ yields branched $4$-covers of $V$
and $U$ whose normal closures have Galois group $S_4$ and $D_4$
respectively. The family $\FF_{D}$ is parametrized by the
$\GL_2(\Z)\times \SL_3(\Z)$ orbits in $U(\Z)$.

The Sato-Tate group for $\FF_S$ is $S_4\subset\GL_3(\C)$ embedded via
its standard representation, and the Sato-Tate group for $\FF_D$ is
$D_4\subset S_4\subset\GL_3(\C)$.  
We identify $\TT$ with $(S^1)^3$ modulo
permutation of the three coordinates and use the notation
$\delta(a,b,c)$ to denote the Dirac delta measure supported at the
point $(a,b,c)$. Then the Sato-Tate measures and the indicators $i_1$,
$i_2$, and $i_3$ for the families $\FF_S$ and $\FF_D$ are listed in
Table~\ref{tableSTquartic}.  The computations of the indicators are
elementary, and only require the character of the standard
representation of $S_4$. The computations of the Sato-Tate measures are
more involved, but follow from the proofs of Baily's \cite{Baily} and Bhargava's
\cite{B1} counting results on $D_4$- and $S_4$-fields, respectively.

\begin{table}[hbt]\label{t:mixed4}
\centering
\begin{tabular}{|c | c|c|c|c|}
  \hline &\\[-12pt]
\multirow{2}{*}{Family}&\multirow{2}{*}{Sato-Tate measure} &\multicolumn{3}{c|}{Indicators}\\ \cline{3-5}
&&$i_1$&$i_2$&$i_3$\\[.05in]
\hline
  $\FF_S$&$\frac{1}{24}\delta(1,1,1)+\frac{1}{4}\delta(1,1,-1)+
\frac{1}{3}\delta(1,\rho,\overline{\rho})+\frac{1}{4}\delta(-1,i,-i)+
\frac{1}{8}\delta(1,-1,-1)$
&1&1&1
\\[.2in]
$\FF_D$&$\frac{1}{8}\delta(1,1,1)+\frac{1}{4}\delta(1,1,-1)
+\frac{1}{4}\delta(-1,i,-i)+\frac{3}{8}\delta(1,-1,-1)$&2&2&2
\\
\hline
\end{tabular}
\caption{Sato-Tate measures and the corresponding indicators for the families in \S6.2.}\label{tableSTquartic}
\end{table}

Since $D_4$ does not act irreducibly on $\C^3$, the family $\FF_D$ is not essentially cuspidal. In fact $\C^3$ decomposes as the direct sum of the standard $2$-dimensional representation of $D_4$ and the character of $D_4$ whose kernel is generated by the two transpositions of $D_4$, so we can write $\FF_D = \FF_{\rm dih} \oplus \FF_{\rm char}$. Geometrically we can describe $\FF_{\rm char}$ as a branched $2$-cover constructed as follows: for $(A,B,x,y)\in U$, the zero set of the degenerate ternary quadratic form $Ax - By$ is the union of two lines in $\mathbb{P}^2$.  These two lines are joining opposite points of the common zero locus of $A$ and $B$ and the interpretation is that this picture is preserved by the action of $D_4$.

The quantitative Sato-Tate equidistribution for $\FF_{\rm
  dih}$ and $\FF_{\rm char}$ follows from the methods in \cite{Baily}
and \cite{B1}, respectively. If we further assume a power saving error
estimate for the count of $D_4$-fields, then, since the dihedral
representation $D_4\subset \GL_2(\C)$ and the above character
$D_4\to \GL_1(\C)$ are orthogonal, we expect both $\FF_{\rm dih}$ and
$\FF_{\rm char}$ to have symplectic symmetry type. The distribution of
the low-lying zeros for the family $\FF_D$ will then be the independent
direct sum of two $\Sp(\infty)$ ensembles. In particular, for
restricted support of the level densities, we expect there to be no correlation
between the zeros of $L(s,\mathrm{dih}_K)$ and $L(s,\mathrm{char}_K)$
as $K$ ranges over $D_4$-fields.

As in \S6.1, we find the Sato-Tate measure of the family $\FF$ to be
\[
\mu_{\rm ST}(\FF) = c_S \cdot \mu_{\rm ST}(\FF_S) + c_D \cdot \mu_{\rm ST}(\FF_D),
\]
where $c_S$ and $c_D$ denote the proportions of $\FF_S$ and $\FF_D$ inside $\FF$. It follows immediately from Table~\ref{tableSTquartic} that the corresponding indicators satisfy $i_1(\FF)=i_2(\FF)=i_3(\FF)=c_S+2c_D$. Finally, the
statistics of the low-lying zeros of $\FF$ are simply the
superposition of those of the families $\FF_S$ and $\FF_D$, weighted with the respective
proportions $c_S$ and $c_D$.

\section{Local equidistribution for $S_n$-families}\label{s:mass}

 Sections \ref{s:families} and \ref{s:n-ary} were concerned with the splitting behavior of unramified primes in families of number fields. In the present section we investigate the ramified
 primes. We conclude the section with a reformulation of Bhargava's heuristics~\cite{B3} for counting number fields, via a comparison with Peyre's constant~\cite{Peyre} for the counting of rational points on Fano varieties.

 \subsection{Binary $n$-ic forms}\label{s:loceqnic}
We begin with the family $\FF=\FF_{\nic}$ of binary $n$-ic forms.  Each field in this family can be thought of as
arising from a set $S=X_f$ of $n$ points in $\P^1(\overline{\Q})$
defined over $\Q$. The absolute Galois group of $\Q$ acts on this set,
and the fixed subgroup of the Galois group cuts out the number field
$M=M_f$ corresponding to these $n$ points.  Thus, we obtain an
injection $$\Gal(M/\Q)\hookrightarrow \Aut(S)\cong S_n.$$ Fix a prime
$p$. We may consider the reduction $\overline{S}=X_f\otimes_\Z \F_p$
of $S$ modulo $p$ which yields $n$ points, counted with multiplicity,
in $\P^1(\overline{\F_p})$. This set $\overline{S}$ is defined over
$\F_p$, and the absolute Galois group of $\F_p$ acts on it. The
splitting type of $p$ is determined by the orbit decomposition of this
action. More precisely, if $\overline{S}$ breaks up into orbits having
size $a_1,a_2,\ldots,a_\ell$ (in decreasing order), then the splitting type of $p$ in $K=K_f$ is
$\sigma=\sigma(\overline{S})=(a_1a_2\ldots
a_\ell)$. In particular, $p$ is unramified in $K$ if and only
if $\overline{S}$ consists of $n$ distinct points. Motivated by this,
we define the splitting type of such a set $\overline{S}$ to be
$\sigma\in \mathcal{ST}_n$, where $\mathcal{ST}_n$ denotes the set of
all possible splitting types on $n$ points.

Consider the (finite) set of sets of $n$ points, counted with
multiplicity, in $\P^1(\overline{\F_p})$ that are defined over
$\F_p$. We have seen in the proof of Lemma~\ref{l:langweil} that it is
naturally identified with the set of non-zero forms $V(\F_p) - \{0\}$
modulo multiplication by $\GL_1(\F_p)=\F_p^\times$.
Thus for every prime $p$ we have a map $V(\F_p)-\{0\} \to
\mathcal{ST}_n$. Pushing forward the counting measure on $
V(\Z/p^2\Z)^{p^{2}\nmid \Delta}$ via
\begin{equation}\label{VZp2}
V(\Z/p^2\Z)^{p^{2}\nmid \Delta} \rightarrow V(\F_p) - \{0\}
\twoheadrightarrow \mathcal{ST}_n,
\end{equation} 
we obtain a measure $\mu_{\nic,p}$ on
$\mathcal{ST}_n$.

If $\FF$ is the family of fields arising from integral binary $n$-ic
forms having squarefree discriminant, then each field $K_f\in\FF$,
corresponding to $f$, arises from the set $X_f$ of the $n$ roots of
$f$ in $\P^1(\overline{\Q})$. We have natural reduction maps for every
prime $p$,
\begin{equation*}
\begin{array}{rcl}
	\FF(x)&\to& \mathcal{ST}_n \\
K_f&\mapsto&\sigma(X_f\otimes_\Z \F_p).
\end{array}
\end{equation*}
We expect that the image of $\FF(x)$
gets equidistributed in $V(\Z/p^2\Z)^{p^2\nmid \Delta}$ and therefore
also in $\mathcal{ST}_n$, as $x\to \infty$, with respect to the
measure $\mu_{\nic,p}$.

This statement would generalize~\eqref{cptau} which
is the unramified case. The unramified splitting types belong to the
subset $\T_n \subset \mathcal{ST}_n$ defined in
Section~\ref{s:setup}. The restriction of the measure
$\mu_{\nic,p}$ to $\T_n$ takes the form of the pushforward
of the normalized counting measure via the map
\[
V(\F_p)^{\Delta\neq 0} \to \mathcal{T}_n \subset \mathcal{ST}_n,
\]
which one may compare with~\eqref{VZp2}. The natural normalization of the measure is $\mu_{\nic,p}(\mathcal{T}_n)=1$.

Some of the analogous constructions are also valid for fields arising
from $n$ points in $\P^k$. However, in this case it is necessary to
count ramified points along with the additional data of local tangent
directions.

\subsection{$S_n$-fields of bounded discriminant}\label{s:rambd}
We now consider the families $\FF=\FF_\univ$ of $S_n$-fields of bounded
discriminants with $n=3,4,5$ as in Section~\ref{s:families}. In
addition to Theorems~\ref{thquarticcount} and \ref{thquinticcount},
Bhargava also established the equidistribution at ramified primes.

Let $\mathcal{ET}_{n,p}$ denote the (finite) set of degree-$n$ \'etale
extensions of $\Q_p$, and let $\mu_{\univ,p}$ be the
measure where each \'etale extension $K_p$ is weighted
proportionally to $\Disc_p(K_p)^{-1}\#\Aut(K_p)^{-1}$, and normalized by $\mu_{\univ,p}(\mathcal{T}_n)=1$. The total sum
of these relative proportions is computed in \cite[Proposition
  2.3]{B3} extending a mass formula for totally ramified extensions due to Serre. It is remarkable that
the answer is independent of the prime $p$,
including the primes $p$ dividing $n!$ which may wildly ramify.
\begin{proposition}[\cite{B3}]\label{p:etext}
For any prime $p$ and integers $0\le k \le n-1$ with $n\ge
1$, $$\sum_{\substack{[K_p:\Q_p]=n\\\Disc_p(K_p)=p^k}}\frac{1}{\#\Aut(K_p)}
=q(k,n-k),$$ where the sum in the left-hand side is over \'etale
extensions of $\Q_p$ of discriminant $p^k$ and $q(k,n-k)$ denotes the
number of partitions of $k$ into at most $n-k$ parts.
\end{proposition}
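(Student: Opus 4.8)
The plan is to deduce this from Serre's mass formula for totally ramified extensions of local fields, organized through a generating function in the degree that exploits the multiplicative structure of \'etale algebras. The object to compute is the total mass in degree $n$, namely
\[
A_n:=\sum_{[K_p:\Q_p]=n}\frac{1}{p^{v_p(\Disc_p(K_p))}\#\Aut(K_p)}=\sum_{k\ge 0}\Bigl(\sum_{\Disc_p(K_p)=p^k}\frac{1}{\#\Aut(K_p)}\Bigr)p^{-k},
\]
and the proposition amounts to the identity $A_n=\sum_{k=0}^{n-1}q(k,n-k)p^{-k}$. Since $\Disc_p$ is multiplicative over the field factors of an \'etale algebra $K_p=\prod_i L_i$, and $\#\Aut_{\Q_p}(K_p)$ is the product over the distinct field factors $M$ (appearing with multiplicity $e_M$) of $(\#\Aut(M))^{e_M}e_M!$, the exponential (``cycle index'') formula gives
\[
\sum_{n\ge 0}A_n t^n=\exp\Bigl(\sum_{m\ge 1}\frac{t^m}{m}\sum_{\substack{L\subseteq\overline{\Q}_p\\ [L:\Q_p]=m}}p^{-v_p(\Disc_p(L))}\Bigr),
\]
where $L$ runs over degree-$m$ subfields of a fixed algebraic closure, using $\sum_{L\subseteq\overline{\Q}_p}=\sum_{[L]}m/\#\Aut(L/\Q_p)$ to pass between subfields and isomorphism classes.

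Next I would cut each field factor along its maximal unramified subextension. Writing $L\supseteq\Q_{p^f}\supseteq\Q_p$ with $L/\Q_{p^f}$ totally ramified of degree $e=m/f$, the relative different of $\Q_{p^f}/\Q_p$ is trivial, so $v_p(\Disc_p(L))=f\cdot d(L/\Q_{p^f})$, where $d(L/\Q_{p^f})$ is the exponent of the relative discriminant. The inner sum over $L$ then splits as $\sum_{f\mid m}\sum_{M}(p^f)^{-d(M/\Q_{p^f})}$, the inner sum running over totally ramified degree-$(m/f)$ extensions of the local field $\Q_{p^f}$, whose residue field has $p^f$ elements. Applying Serre's mass formula over $\Q_{p^f}$, in the form $\sum_M (p^f)^{-(d(M)-e+1)}=e$, collapses this to $\sum_{f\mid m}(m/f)\,p^{f-m}$, so that $\tfrac1m\sum_L p^{-v_p(\Disc_p(L))}=\sum_{f\mid m}p^{f-m}/f$.

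It remains to evaluate $\exp\bigl(\sum_{m\ge1}t^m\sum_{f\mid m}p^{f-m}/f\bigr)$. Regrouping by $f$, the exponent equals $\sum_{f\ge1}\tfrac1f\cdot\frac{t^f}{1-(t/p)^f}$; expanding each logarithm identifies it with $-\sum_{i\ge1}\log\bigl(1-t^i p^{-(i-1)}\bigr)$, so the generating function is $\prod_{i\ge1}(1-t^i p^{-(i-1)})^{-1}$. Its coefficient of $t^n$ is $\sum_{\lambda\vdash n}p^{-(n-\ell(\lambda))}$, and the bijection subtracting $1$ from every part identifies partitions of $n$ into exactly $n-k$ parts with partitions of $k$ into at most $n-k$ parts; hence $A_n=\sum_{k=0}^{n-1}q(k,n-k)p^{-k}$, as required. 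The underlying combinatorial fact is the elementary identity $\sum_k q(k,m)y^k=\prod_{j=1}^m(1-y^j)^{-1}$.

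I expect the step invoking Serre's formula to be the crux. A priori the wildly ramified extensions---those of discriminant exponent $\ge n$, which occur exactly when $p\mid n!$---threaten to destroy the uniformity of the answer in $p$; the content of the proposition is that Serre's mass formula, needed only in its numerical evaluation form and with no refinement by discriminant, forces the totally ramified contribution to collapse to the $p$-uniform expression $\sum_{f\mid m}(m/f)p^{f-m}$, which is what makes the final product $\prod_{i\ge1}(1-t^ip^{-(i-1)})^{-1}$ come out independent of the prime. Once that collapse is established, everything else is formal manipulation of the exponential generating function and the partition bookkeeping above.
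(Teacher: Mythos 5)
Your generating-function computation is sound as far as it goes, and it is essentially the standard argument behind the cited result (the paper itself offers no proof of this proposition, only the reference to \cite{B3}): the exponential formula for \'etale algebras, the cut along the maximal unramified subextension, Serre's mass formula over $\Q_{p^f}$, and the partition bookkeeping correctly establish the aggregated identity $\sum_{K_p}\Disc_p(K_p)^{-1}\#\Aut(K_p)^{-1}=\sum_{k=0}^{n-1}q(k,n-k)p^{-k}$, which is the form the paper actually uses later (the quantity $|\mu_{\univ,p}|$ in \S\ref{s:rambd}).

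However, your opening reduction --- ``the proposition amounts to the identity $A_n=\sum_{k=0}^{n-1}q(k,n-k)p^{-k}$'' --- is a genuine gap. The proposition fixes $k$ and asserts that the mass at discriminant exactly $p^k$ equals $q(k,n-k)$; for a \emph{fixed} prime $p$, the single numerical identity $\sum_{k\ge 0}m_kp^{-k}=\sum_{k=0}^{n-1}q(k,n-k)p^{-k}$ does not determine the individual masses $m_k$, precisely because wildly ramified algebras contribute at exponents $k\ge n$, which your right-hand side never sees. In fact the coefficientwise claim fails at wild primes: for $p=2$, $n=2$, every ramified quadratic \'etale algebra over $\Q_2$ has discriminant exponent $2$ or $3$, so $m_1=0\neq 1=q(1,1)$, even though the totals agree ($1+\tfrac14+\tfrac{2}{8}=1+\tfrac12$). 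So no argument can close the gap for the statement read literally at all $p$; the refined claim is correct only in the absence of wild ramification, e.g.\ for $p>n$, and there your method does prove it, provided you carry a formal variable $x$ marking the discriminant exponent instead of specializing to $x=1/p$: when $p\nmid e$ there are exactly $e$ totally ramified degree-$e$ extensions of $\Q_{p^f}$ inside $\overline{\Q}_p$, each with $d=e-1$, so the refined input $\sum_M x^{d(M)}=ex^{e-1}$ gives the generating function $\prod_{i\ge1}(1-t^ix^{i-1})^{-1}$ and hence $m_k=q(k,n-k)$ coefficient by coefficient. In short: either prove (and state) the proposition in its aggregated form, valid for all $p$ --- which your argument does --- or add the tameness hypothesis and run the $x$-refined version; your closing paragraph, asserting that Serre's formula ``forces the totally ramified contribution to collapse'' uniformly in $p$, is true only after the specialization $x=1/p$, i.e.\ for the total mass, not discriminant by discriminant.
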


We have a natural map from $\FF(x)$ to $\mathcal{ET}_{n,p}$ which
sends $K$ to $K\otimes\Q_p$.  For $n=3, 4, 5$, the respective works of
Davenport--Heilbronn \cite{DH}, and Bhargava \cite{B1} and \cite{B2}
show that, for a fixed local \'etale degree-$n$ extension $K_p$ of
$\Q_p$, the relative proportion of $S_n$-fields $K$ that satisfy
$K\otimes\Q_p\equiv K_p$ is $\Disc_p(K_p)^{-1}\#\Aut(K_p)^{-1}$. Thus,
as we range over fields in $\FF(x)$ and let $x\to \infty$, the images
in $\mathcal{ET}_{n,p}$ are equidistributed with respect to this measure $\mu_{\univ,p}$.

The proof involves, among other things, the relation between the
measure $\mu_{\univ,p}$ and the local counting of orbits.
From the prehomogeneous vector spaces $(G,V)$ we have a surjective map
\[
 V(\Z/p^\nu\Z)^{\Delta\neq 0} \twoheadrightarrow \mathcal{ET}_{n,p},
\]
where $\nu\in \Z_{\ge 1}$ is an absolute constant to be chosen large
enough. (The existence of this $\nu$ is a concrete manifestation in
this context of Grothendieck's base change theorem). The measure
$\mu_{\univ,p}$ is equal to the pushforward of the normalized counting
measure which is established by a local density
calculation~\cite{Bquartic,Bquintic} exploiting the $G(\Z/p^\nu
\Z)$-action.

It is interesting to compare the present situation with the one in
\S\ref{s:loceqnic}. We relate the underlying sets as follows: for
each choice of $n$ and $p$, there is a natural surjective map
\[
\mathcal{ET}_{n,p}\twoheadrightarrow \mathcal{ST}_n \supset \mathcal{T}_n,
\] 
where we associate to each degree-$n$ \'etale extension $K_p$ its
splitting type. 
  This is a local counterpart of~\eqref{rhoK}, by choosing an embedding $\Gal(\Q_p) \hookrightarrow \Gal(\Q)$.
  The size of the fibers to $\mathcal{ST}_n$ can be read from the orbits of inertia as explained in~\cite{Wood,Kedlaya:mass}.
The pushforward of the measure
$\mu_{\univ,p}$ does \emph{not} coincide with the measure $\mu_{\nic,p}$, although in the limit as $p\to \infty$
both measures converge to the Haar
measure on $\mathcal{T}_n$. In fact, the restriction to $\T_n$ of
  the pushforward of $\mu_{\univ,p}$ coincides up to a
  scalar with the Haar measure on $\T_n$ because the orbit-stabilizer
  formula shows that $|\tau||\Stab_{S_n}(\tau)|=|S_n|$.

We note that to understand the
equidistribution in $\mathcal{ST}_n$, it is in general sufficient to
consider $V(\Z/p^\nu \Z)^{\Delta\neq 0}$ with $\nu=2$, while to
understand the equidistribution in $\T_n$ it is sufficient to take
$\nu=1$.

For general $n$, let $\FF_\univ(x)$ denote the set of $S_n$-number fields
having discriminant bounded by $x$.  Bhargava formulates in~\cite{B3}
conjectures for the asymptotics of $|\FF_\univ(x)|$ and for the proportion
of fields with prescribed splitting at a prime $p$.
For $n\ge 6$ the conjectures remain open. Note also that in these cases we cannot view
$\FF_\univ(x)$ as a geometric family in the sense of~\cite{SST}
because the underlying parameter space is a set of integral points of
a complicated algebraic variety which is not rational.

\subsection{An analogue of Peyre's constant}

Consider the $S_n$-family $\FF=\FF_\univ$ of $n$ points in $\P^{n-2}$ for
$n=3,4,5$ as in \S\ref{s:rambd}. The measure $\mu_{\univ,p}$ on $\mathcal{ET}_{n,p}$ is normalized by $\mu_{\univ,p}(\mathcal{T}_n)=1$. 
It is established in \cite{B3} that the proportion of $S_n$-number fields that are unramified at $p$ is the inverse of
\[
|\mu_{\univ,p}| := \mu_{\univ,p}(\mathcal{ET}_{n,p}) = \sum_{k=0}^{n-1} q(k,n-k)p^{-k}.
\]
As $p\to \infty$, this is $1 + \frac1p + 
O\big(\frac{1}{p^2}\big)$.
By Proposition~\ref{p:etext}, the local density~\cite{B3} of number fields of
bounded discriminants can be written as 
\begin{equation}\label{dpff}
d_p(\FF_\univ) := |\mu_{\univ,p}|\cdot \zeta_p(1)^{-1} = \sum\limits_{k=0}^n \frac{q(k,n-k) - q(k-1,n-k+1)}{p^{k}},
\end{equation}
where $\zeta_p(1)=(1-\frac1p)^{-1}$ is the local factor of the Riemann zeta function.
Similarly there is a local density at infinity
$d_\infty(\FF_\univ)$ which is equal to the proportion of $2$-torsion elements in $S_n$. 
The theorems of Davenport--Heilbronn and Bhargava says that the number $|\FF_\univ(x)|$ of $S_n$-fields having absolute discriminant at most $x$
is asymptotic to $c_n x$ as $x\to \infty$, where the constant $c_n= \frac12 \cdot d_\infty(\FF_\univ) \cdot \prod_p d_p(\FF_\univ)$.

This may be compared with Peyre's constant attached to Fano varieties, as follows. The product of local densities is equal to the regularized product of local mass of the measures $\mu_{\univ,p}$,
\[
\prod\nolimits_p d_p(\FF_{\univ}) =
\prod\nolimits_p^* 
|\mu_{\univ,p}|
:=
\mathrm{Res}_{s=1} \zeta(s) \cdot \prod_p |\mu_{\univ,p}| \cdot \zeta_p(1)^{-1}.
\]
The regularization by $\zeta$ is explained by the prehomogeneous vector space $(G,V)$, having a ring of invariant functions generated by $\Delta$, thus the GIT quotient is rational.
The measures $\mu_{\univ,p}$ are normalized globally by the condition $\mu_{\univ,p}(\mathcal{T}_n)=1$, which can be seen as the analogue of the global normalization of Tamagawa measures.
The ``Picard rank" is interpreted to be $1$, which is also the order of pole of $\zeta(s)$.
In comparison with Peyre, the ``Tamagawa number" is $1$, and Peyre's constant ``$\alpha$" is interpreted to be $\frac{1}{2}$, which reflects the global constraint that, according to a classical result of Hasse, the discriminant of an $S_n$-field is always $\equiv 0,1 \pmod{4}$.  

The same reasoning can be applied to the general geometric families $\FF$ described in~\cite{SST}, and also to the automorphic families, to produce a conjectural leading term of the asymptotic count of $|\FF(x)|$ as $x\to \infty$.

\section{One-parameter families of quaternionic fields}\label{s:quaternion}

In this section, we study families of quaternionic number fields, i.e., Galois number fields $K$ such that $\Gal(K/\Q)$ is isomorphic to the quaternion group $Q$. The group $Q$ is a non-abelian group of order $8$ with the presentation
$$
\langle i,j,k\,|\,i^2=j^2=k^2=ijk\rangle.
$$
We denote the element $i^2=j^2=k^2=ijk$ by $-1$.

The group $Q$ has five irreducible representations. Apart from the trivial representation, $Q$ has three $1$-dimensional irreducible representations, coming from the maps which send one of $i$, $j$, and $k$ to $1$ and the other two to $-1$. We denote these three nontrivial characters by $\chi_1$, $\chi_2$, and $\chi_3$, respectively. Finally, apart from these four representations, $Q$ has an irreducible $2$-dimensional representation which we denote by $\rho$ and whose character we denote by $\chi$. The character $\chi$ sends $\pm1$ to $\pm2$ and the other elements of $Q$ to $0$. It is easy to check that the Frobenius--Schur indicator of $\chi$ is $-1$. In other words, $\rho$ is a quaternionic representation.

It follows that the zeta function $\zeta_K(s)$ factors into the product
$$
\zeta_K(s)=\zeta(s)L(s,\chi_1)L(s,\chi_2)L(s,\chi_3)L(s,\rho_K)^2,
$$
where
\[
\rho_K:\Gal(\Q)\to\Gal(K/\Q)\simeq Q \to \SL_2(\C) \subset \GL_{2}(\C).
\]
Thus in comparison with Section~\ref{s:setup}, the Artin $L$-function
of interest, i.e.\ $L(s,\rho_K)$, is constructed in a slightly
different way. The isomorphism $\Gal(K/\Q)\simeq Q$ is not unique but
$\rho_K$ is nevertheless uniquely defined up to conjugation because
$Q$ has exactly one irreducible $2$-dimensional representation. We
also note that $L(s,\rho_K)$ can be realized as the $L$-function of a
Hecke character of order four of a real quadratic extension of $\Q$,
since the irreducible $2$-dimensional representation of $Q$ is induced
from a character on either of its cyclic order-4 subgroups. Therefore
$L(s,\rho_K)$ is entire. Furthermore, the functions $L(s,\chi_i)$,
$i\in\{1,2,3\}$, are Dirichlet $L$-functions corresponding to the
three quadratic subfields of the unique biquadratic subfield $M$ of
$K$, which is also the subfield of elements of $K$ fixed by the center
$\{\pm1\}\subset Q$. For a treatment of all the above facts
regarding Artin $L$-functions, see \cite[Chapter 8]{CF}.

We now examine the question of how a rational prime $p$ splits in $K$
and $M$. In this section we will use exponents outside parentheses as
a convenient shorthand for repetitions in splitting types; we will
e.g.\ write $(1)^8$ and $(1^2)^4$ instead of $(11111111)$ and
$(1^21^21^21^2)$, respectively. In the case when $p$ is unramified,
the splitting is determined by the image of $\Frob_p$ in
$\Gal(K/\Q)$. If $\Frob_p$ has image $1$ in $Q$, then the splitting
type of $p$ is $(1)^8$ in $K$ and $(1)^4$ in $M$, since the size of
the decomposition group is $1$. Similarly, if $\Frob_p$ has image
$-1$, then the splitting type of $p$ is $(2)^4$ in $K$ and $(1)^4$ in
$M$. Otherwise, the splitting type of $p$ is $(4)^2$ in $K$ and
$(2)^2$ in $M$. If $p$ is tamely ramified, then we have the following
possibilities for the pair $(D,I)$ of the decomposition and inertia
groups: $(C_4,C_4)$, $(C_4,C_2)$, and $(C_2,C_2)$.  The corresponding
splitting types of $p$ are $(1^4)^2$, $(2^2)^2$, and $(1^2)^4$,
respectively. 

\begin{theorem}\label{quaternionthm}
 Let $a,b\in \Q^\times$ be such that none of $a,b,ab$ is a perfect square. Then the following assertions are equivalent:
\begin{itemize}
\item[(i)] There are quaternionic extensions of $\Q$ containing $\Q(\sqrt{a},\sqrt{b})$.

\item[(ii)] For each prime $p$ the relation $(-a,-b)_p=(-1,-1)_p$  of Hilbert symbols holds.\footnote{See \cite[\S III.5]{Neukirch} for the definition and basic properties of the Hilbert symbol.}

\item[(iii)] The quaternion algebra $\big(\frac{-a,-b}{\Q}\big)$ is isomorphic to the Hamilton quaternions $\big(\frac{-1,-1}{\Q}\big)$.

\item[(iv)] The ternary quadratic form $<a,b,ab>$ is equivalent to
  $<1,1,1>$ over $\Q$.\footnote{Here we are denoting the ternary
  quadratic form $\alpha x^2+\beta y^2+\gamma z^2$ by
  $<\alpha,\beta,\gamma>$.}

\item[(v)] There exist $\alpha,\beta,\gamma,\lambda,\mu,\nu\in\Q$ such that
\begin{equation*}
\begin{array}{rcl}
a &=& \alpha^2+\beta^2+\gamma^2\\
b &=& \lambda^2+\mu^2+\nu^2\\
0 &=& \alpha\lambda+\beta\mu+\gamma\nu.
\end{array}
\end{equation*}
\end{itemize}

\noindent Moreover, if the above assertions are satisfied, and we define
\begin{equation}\label{theta}
\theta := 1+\frac{\alpha}{\sqrt{a}}+\frac{\mu}{\sqrt{b}}+\frac{\alpha\mu-\beta\lambda}{\sqrt{ab}},
\end{equation}
then the quaternionic extensions of $\Q$ containing $\Q(\sqrt{a},\sqrt{b})$ are exactly those of the form $\Q(\sqrt{q\theta})$ with $q\in\Q^\times$.
\end{theorem}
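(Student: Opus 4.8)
This is, in essence, Witt's theorem, and the plan is to follow his argument in modern dress: prove (i)$\Rightarrow$(iii), then the equivalences (iii)$\Leftrightarrow$(ii) and (iii)$\Leftrightarrow$(iv)$\Leftrightarrow$(v), then (v)$\Rightarrow$(i) together with the explicit description. The block (ii)--(v) consists of standard facts about quaternion algebras and ternary quadratic forms over $\Q$. For (ii)$\Leftrightarrow$(iii): bimultiplicativity of the Hilbert symbol gives $(-a,-b)_p=(-1,-1)_p(-1,b)_p(a,-1)_p(a,b)_p$, so (ii) is equivalent to $(-a,-b)_p=(-1,-1)_p$ at every place, which by the classification of quaternion algebras over $\Q$ by their local invariants is equivalent to $\big(\frac{-a,-b}{\Q}\big)\cong\big(\frac{-1,-1}{\Q}\big)$. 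For (iii)$\Leftrightarrow$(iv): a quaternion algebra over a field of characteristic $\ne 2$ is determined up to isomorphism by the ternary quadratic form carried by its trace-zero part, which for $\big(\frac{-a,-b}{\Q}\big)$ is $<a,b,ab>$ and for $\big(\frac{-1,-1}{\Q}\big)$ is $<1,1,1>$. For (iv)$\Leftrightarrow$(v): an isometry between $<1,1,1>$ and $<a,b,ab>$ amounts to choosing images of the first two basis vectors --- mutually orthogonal vectors of norms $a$ and $b$ --- since the third is then forced (both forms have square discriminant and Witt cancellation holds over $\Q$); writing the two vectors in coordinates produces exactly the three equations of (v).

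For (i)$\Leftrightarrow$(iii) I would use the theory of central embedding problems. Since none of $a,b,ab$ is a square, $E:=\Q(\sqrt a,\sqrt b)$ is a $V_4$-extension of $\Q$, and a quaternionic extension $K\supseteq E$ is precisely a proper solution of the embedding problem given by $\Gal(E/\Q)\cong V_4$ and the non-split central extension $1\to\Z/2\Z\to Q\to V_4\to 1$. A proper solution exists if and only if the inflation to $H^2(\Gal(\Q),\Z/2\Z)=\mathrm{Br}(\Q)[2]$ of the class of this group extension is trivial; an explicit $2$-cocycle computation for $Q$ identifies this obstruction with the Brauer class $(a,a)(a,b)(b,b)=(a,-1)(a,b)(b,-1)$, which by the same Hilbert-symbol identity used above is trivial precisely when $\big(\frac{-a,-b}{\Q}\big)\cong\big(\frac{-1,-1}{\Q}\big)$. (The implication (i)$\Rightarrow$(ii) can alternatively be obtained by a place-by-place analysis of the local behaviour of a putative quaternionic $K$.)

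For (v)$\Rightarrow$(i) and the explicit $\theta$, I would realize (v) inside the Hamilton quaternions $H=\big(\frac{-1,-1}{\Q}\big)$ with standard basis $1,I,J,K$: the elements $u=\alpha I+\beta J+\gamma K$ and $v=\lambda I+\mu J+\nu K$ satisfy $u^2=-a$, $v^2=-b$ and $uv=-vu$ (the third equation of (v) is exactly the anticommuting relation), so $1,u,v,uv$ is a $\Q$-basis of $H$ exhibiting $H\cong\big(\frac{-a,-b}{\Q}\big)$ --- which re-proves (v)$\Rightarrow$(iii) on the way. To build $K$, let $\sigma,\tau$ generate $\Gal(E/\Q)$ with $\sigma$ fixing $\sqrt b$ and $\tau$ fixing $\sqrt a$, and verify by a direct computation that the $\theta$ of \eqref{theta} satisfies the relative-norm conditions
\begin{equation*}
\theta\cdot\tau(\theta)\in b\cdot\Q(\sqrt a)^{\times 2},\qquad \theta\cdot\sigma(\theta)\in a\cdot\Q(\sqrt b)^{\times 2},\qquad \theta\cdot\sigma\tau(\theta)\in a\cdot\Q(\sqrt{ab})^{\times 2}.
\end{equation*}
By the classical criterion for cyclic quartic extensions, these say that $K:=E(\sqrt\theta)$ is cyclic of degree $4$ over each of $\Q(\sqrt a),\Q(\sqrt b),\Q(\sqrt{ab})$; since an order-$8$ group with a central subgroup of order $2$ and quotient $V_4$ in which all three subgroups of order $4$ are cyclic must be $Q$, this forces $\Gal(K/\Q)\cong Q$, and $K=\Q(\sqrt\theta)$ (the stabilizer of $\sqrt\theta$ in $Q$ is trivial, as every nontrivial subgroup of $Q$ contains the center). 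Thus $K$ is quaternionic and contains $E$, giving (i). The precise shape of \eqref{theta} is exactly what makes these three relative norms land in the prescribed square classes; a structural derivation of the formula is to split $H\cong\big(\frac{-a,-b}{\Q}\big)$ over $E$ and read $\theta$ off a primitive idempotent of $H\otimes_\Q E$.

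For the final clause, the set of quaternionic extensions of $\Q$ containing $E$ is a torsor under $H^1(\Gal(\Q),\Z/2\Z)=\Q^\times/\Q^{\times 2}$ (the difference of two proper solutions of the embedding problem is a quadratic character), and twisting $K=\Q(\sqrt\theta)$ by $\chi_q$ yields $\Q(\sqrt{q\theta})$: multiplying $\theta$ by $q\in\Q^\times$ leaves the three relative-norm conditions intact since $q$ is $\Gal(E/\Q)$-fixed and enters them squared, so $\Q(\sqrt{q\theta})$ is again quaternionic containing $E$. Two of these fields coincide, $\Q(\sqrt{q\theta})=\Q(\sqrt{q'\theta})$, exactly when $q/q'\in E^{\times 2}\cap\Q^\times=\langle\Q^{\times 2},a,b\rangle$, which merely records the ambiguity in identifying $\Gal(K/\Q)$ with $Q$; hence the quaternionic extensions containing $\Q(\sqrt a,\sqrt b)$ are precisely the fields $\Q(\sqrt{q\theta})$, $q\in\Q^\times$. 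I expect the main obstacle to be the explicit verification of the three relative-norm identities for the specific $\theta$ of \eqref{theta}, since the whole distinction between $Q$ and $D_4$ is encoded in whether these norms pick up the extra factors of $a$ and $b$; the quaternion-algebra picture (extracting $\theta$ from an idempotent of $H\otimes_\Q E$) is the cleanest way to keep that bookkeeping under control.
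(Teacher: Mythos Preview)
Your proposal is correct and follows essentially the same route as the paper. Both arguments reduce the block (ii)--(v) to standard facts about quaternion algebras and ternary forms, handle (i)$\Leftrightarrow$(iii) via Witt's embedding obstruction in the Brauer group (your explicit class $(a,a)(a,b)(b,b)$ is exactly the paper's $\big(\tfrac{-a,-b}{\Q}\big)\otimes\big(\tfrac{-1,-1}{\Q}\big)$ written multiplicatively), and establish that $\Q(\sqrt\theta)$ is quaternionic by checking it is cyclic of degree $4$ over each quadratic subfield and invoking the characterization of $Q$ as the unique group of order $8$ with three cyclic subgroups of order $4$. The only cosmetic differences are that the paper cites Kiming for the explicit $\theta$ and phrases the final clause as a compositum argument ($K(\sqrt\theta)$ has Galois group $Q\times\Z/2\Z$), whereas you phrase it as a torsor under $\Q^\times/\Q^{\times 2}$; these are equivalent.
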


\begin{proof}
The equivalence (ii) $\Leftrightarrow$ (iv) follows from the theory of
quadratic forms (from, for example, Chapter~6, Theorem 1.2 and Chapter
4, Theorem 1.1 in \cite{CRQF}). The equivalence (iii) $\Leftrightarrow$ (iv) follows from \cite[Proposition 2.5 (page 57)]{Lam} and the equivalence (ii) $\Leftrightarrow$ (iii) follows from class field theory, see \cite[Corollaire 1.2 (page 32)]{Vigneras}.
The assertion (v) is equivalent to
$M^TM= \left(\begin{smallmatrix} a & 0 \\ 0 &
  b \end{smallmatrix}\right)$ where $M:=\left(
	\begin{smallmatrix}
	\alpha & \lambda  \\
	\beta & \mu \\
	\gamma & \nu \\
         \end{smallmatrix}
	\right).$
Thus (iv) $\Rightarrow$ (v) is immediate. Conversely, (v)
$\Rightarrow$ (iv) follows by completing $M$ with the third
column
$$\left(\beta\nu-\gamma\mu,\gamma\lambda-\alpha\nu,\alpha\mu-\beta\lambda\right)^T$$ 
to obtain the equivalence of the quadratic forms. 

The equivalence (i) $\Leftrightarrow$ (iii) $\Leftrightarrow$ (iv) is Witt's theorem \cite{Witt}.  We outline Witt's original proof of the equivalence (i) $\Leftrightarrow$ (iii) because it doesn't seem to be well-known. Witt first considers the non-split exact sequence 
\[
 1 \to \{\pm 1\}  \to Q \to \Z/2\Z \times \Z/2\Z \to 1,
\]
where $\{\pm 1\}$ is the center and $\Z/2\Z \times \Z/2\Z$ is given by the image of $\{1,i,j,k\}$. This yields a class $\xi\in H^2(\Q(\sqrt{a},\sqrt{b}),\overline{\Q}^\times)$ of order two. Moreover $\xi$ is trivial after inflating to $\Q$ if and only if $(i)$ holds. Identifying Galois cohomology with the Brauer group, it can be verified that the inflation of $\xi$ to $\Q$ corresponds to the central simple algebra
\[
\Big(\frac{-a,-b}{\Q}\Big) \otimes \Big(\frac{-1,-1}{\Q}\Big)
\]
of dimension $16$ over $\Q$. By the properties of the Brauer group
this algebra is split over $\Q$ if and only if
$\big(\frac{-a,-b}{\Q}\big)$ is isomorphic to
$\big(\frac{-1,-1}{\Q}\big)$, which concludes the proof of (i)
$\Leftrightarrow$ (iii).

The final assertion, given the equivalent conditions (i)--(v), follows
from \cite[Theorem 4]{Kiming}. 
Let $\theta$ be given by~\eqref{theta}. Using (v), we find that the norm
of $\theta$ in $\Q(\sqrt{a})$ is equal to $\frac{1}{b}(\nu +
\frac{\alpha\nu - \gamma\lambda}{\sqrt{a}})^2$. Hence $b$ is a sum of
two squares in $\Q(\sqrt{a})$ and furthermore $\Q(\sqrt{\theta})$ is a
cyclic extension of $\Q(\sqrt{a})$. Similarly we find that
$\Q(\sqrt{\theta})$ is a cyclic extension of $\Q(\sqrt{b})$ and a
cyclic extension of $\Q(\sqrt{ab})$. Since $Q$ is the only group of
order $8$ that contains three distinct cyclic subgroups of order $4$
we deduce that $\Q(\sqrt{\theta})$ has Galois group $Q$.

Suppose that $K$ is another quaternionic extension of $\Q$ containing
$\Q(\sqrt{a},\sqrt{b})$. The composition of $K$ with
$\Q(\sqrt{\theta})$ is an extension $K(\sqrt{\theta})$ which is
biquadratic over $\Q(\sqrt{a},\sqrt{b})$ and has degree $16$ over $\Q$
with Galois group $Q\times \Z/2\Z$. The third intermediate extension
inside $K(\sqrt{\theta})$ has Galois group $(\Z/2\Z)^3$ and is of the
form $\Q(\sqrt{a},\sqrt{b},\sqrt{c})$ for some $c\in \Q^\times$. Then
we have $K(\sqrt{\theta})=\Q(\sqrt{\theta},\sqrt{c})$ and it is not
difficult to verify that $K=\Q(\sqrt{q\theta})$ for some $q\in
\Q^\times$ which may differ from $c$ above primes dividing $ab$.
\end{proof}

\begin{remark}
Let us point out that the last two paragraphs of the proof of Theorem
\ref{quaternionthm} independently establish the implication (v)
$\Rightarrow$ (i); see \cite{JensenYui,Reichardt} for variations of
this argument. We also note that \cite{Reichardt} shows how to
establish the implication (i) $\Rightarrow$ (ii) in a case by case
verification.
\end{remark}

\begin{corollary}\label{three squares}
	A necessary condition for the existence of a quaternionic extension of $\Q$ containing $\Q(\sqrt{a},\sqrt{b})$ is that each of $a,b,ab$ is a sum of three squares.
\end{corollary}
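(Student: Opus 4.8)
The plan is to deduce this immediately from Theorem~\ref{quaternionthm}, using either of the equivalent conditions (iv) or (v); in fact the statement is built into those conditions, and the only mild subtlety is that ``sum of three squares'' must be read over $\Q$.

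First I would invoke the equivalence (i) $\Leftrightarrow$ (iv): if a quaternionic extension of $\Q$ containing $\Q(\sqrt a,\sqrt b)$ exists, then the ternary form $\langle a,b,ab\rangle$ is $\Q$-equivalent to $\langle 1,1,1\rangle$. Now a rational number $c$ is a sum of three squares in $\Q$ precisely when $\langle 1,1,1\rangle$ represents $c$ over $\Q$, and $\Q$-equivalent quadratic forms represent the same set of rational values. Since $\langle a,b,ab\rangle$ visibly represents $a$ at $(1,0,0)$, $b$ at $(0,1,0)$, and $ab$ at $(0,0,1)$, each of $a$, $b$, $ab$ is a sum of three rational squares, as claimed. (One could equally cite the symmetry of all the conditions (i)--(v) under permuting the triple $(a,b,ab)$, since $\Q(\sqrt a,\sqrt b)=\Q(\sqrt a,\sqrt{ab})=\Q(\sqrt b,\sqrt{ab})$, but with (iv) this is unnecessary.)

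Alternatively, and more explicitly, one argues from condition (v): it gives $a=\alpha^2+\beta^2+\gamma^2$ and $b=\lambda^2+\mu^2+\nu^2$ outright, together with $\alpha\lambda+\beta\mu+\gamma\nu=0$. Writing $v=(\alpha,\beta,\gamma)$, $w=(\lambda,\mu,\nu)$ and taking the cross product $v\times w=(\beta\nu-\gamma\mu,\ \gamma\lambda-\alpha\nu,\ \alpha\mu-\beta\lambda)$ --- precisely the vector used to complete the matrix $M$ in the proof of Theorem~\ref{quaternionthm} --- the Lagrange identity $|v\times w|^2=|v|^2|w|^2-(v\cdot w)^2$ gives
\[
(\beta\nu-\gamma\mu)^2+(\gamma\lambda-\alpha\nu)^2+(\alpha\mu-\beta\lambda)^2 = ab - 0 = ab,
\]
exhibiting $ab$ as a sum of three squares as well.

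There is essentially no obstacle: the corollary is a direct consequence of Theorem~\ref{quaternionthm}, and either route above is a one-line argument. The only points worth flagging are, first, that the conclusion is about sums of three \emph{rational} squares --- when $a,b$ are integers one may pass to sums of three integer squares via the classical Davenport--Cassels lemma, but this refinement is not needed here --- and second, that the condition in particular forces $a,b>0$, consistent with a quaternionic extension of $\Q$ being totally real.
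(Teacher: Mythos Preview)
Your proof is correct and follows the same approach as the paper, which simply says the corollary ``follows immediately from the implication (i) $\Rightarrow$ (iv) of Theorem~\ref{quaternionthm}.'' You have merely spelled out the details (that $\Q$-equivalent forms represent the same values, or the explicit cross-product identity from (v)), which is fine but not required.
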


\begin{proof}
	This follows immediately from the implication (i) $\Rightarrow$ (iv) of Theorem \ref{quaternionthm}.
\end{proof}

\begin{remark}
	The converse of Corollary \ref{three squares} doesn't hold in
        general. A counterexample is given by $a=163$ and $b=14$, see
        \cite[Theorem 4]{Taussky}. However, it is true that
        $\Q(\sqrt{a})$ is embeddable in a quaternionic extension of
        $\Q$ if and only if $a$ is a sum of three squares (see, e.g.,
        \cite[(II.2.1)]{JensenYui}).
\end{remark}

We now describe the families of quaternionic number fields that we
consider in this section. We shall choose $a$ and $b$ such that $2$ is
unramified in $\Q(\sqrt{a},\sqrt{b})$; that is, we assume that
$a,b\equiv 1 \pmod{4}$, are squarefree and satisfy the assumptions of
Theorem~\ref{quaternionthm}. Consider the family $\FF$ of quaternionic
fields $K_q=\Q(\sqrt{q\theta})$ containing $\Q(\sqrt{a},\sqrt{b})$,
where $a$ and $b$ are fixed, $\theta$ is provided by Theorem
\ref{quaternionthm}, and $q\equiv 0,1 \pmod{4}$ varies over
fundamental discriminants relatively prime to $ab$.

\begin{lemma}\label{l:conductor-quaternion}
The conductor of the Artin representation corresponding to $\Q(\sqrt{q\theta})$ is $2^\alpha r(ab)^2(q^*)^2$, where $\alpha=0,4$ and may depend on $q$, $r(ab)$ is the square-free part of $ab$, and $q^*$ denotes the odd part of $q$.
\end{lemma}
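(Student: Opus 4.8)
The plan is to compute the local Artin conductor exponent $a_p=a_p(\rho_{K_q})$ prime by prime, so that $C_{K_q}=\prod_p p^{a_p}$, exploiting the identity $\rho_{K_q}=\rho_K\otimes\chi_q$ recorded in the Introduction together with the description of $K_q$ as a fixed field inside $K(\sqrt q)$. The starting observation is that, since the irreducible $2$-dimensional representation of $Q$ is induced from a faithful character of any of its three cyclic order-$4$ subgroups, one has $\rho_{K_q}=\mathrm{Ind}_{L/\Q}\psi_q$, where $L$ is one of the quadratic subfields $\Q(\sqrt a)$, $\Q(\sqrt b)$, $\Q(\sqrt{ab})$ of the biquadratic field $M=\Q(\sqrt a,\sqrt b)$, and $\psi_q$ is a ray class character of $L$ of order dividing $4$ cutting out $K_q/L$; moreover by the projection formula $\psi_q=\psi_1\cdot(\chi_q\circ N_{L/\Q})$, where $\psi_1$ attaches to the untwisted field $\Q(\sqrt\theta)$. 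By inductivity of conductors, $a_p(\rho_{K_q})$ is then read off from $\mathrm{disc}(L/\Q)$ and the conductor of the local components of $\psi_q$ above $p$, reducing every local computation to one about abelian characters.

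Next I would dispose of the odd primes. For $p\nmid 2abq$ both $L/\Q$ and $K_q/L$ are unramified at $p$, so $a_p=0$. For odd $p\mid r(ab)$ (automatically $p\nmid q$ since $\gcd(q,ab)=1$) the ramification at $p$ in $K_q/\Q$ is tame, hence the inertia subgroup of $Q$ is cyclic and nontrivial; the only such subgroups are the center $\{\pm1\}$ and the three order-$4$ subgroups, and on none of them does $\rho$ fix a nonzero vector (as $\rho(-1)=-\mathrm{Id}$ and $\rho$ of an order-$4$ element has eigenvalues $\pm i$), so $a_p(\rho_K)=2$; twisting by $\chi_q$, unramified at $p$, changes nothing, so $a_p(\rho_{K_q})=2$, and these primes contribute exactly $r(ab)^2$ — here one should also note that the primes dividing $\gcd(a,b)$, if any, likewise ramify in $K$ with cyclic inertia and contribute an extra square, so the clean statement is with $a,b$ coprime and $r(ab)=|ab|$. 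For odd $p\mid q^{*}$ one has $p\nmid 2ab$, so $\rho_K$ is unramified at $p$ while $\chi_q$ has conductor exponent $1$ there; the conductor of a twist of an unramified $2$-dimensional representation by a character $\psi$ is $\mathrm{cond}(\psi)^{2}$, so $a_p(\rho_{K_q})=2$, contributing $(q^{*})^{2}$.

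It remains to pin down $\alpha=a_2(\rho_{K_q})$, and this is the step requiring real work. Since $a,b\equiv 1\pmod4$, the field $M$ is unramified at $2$, hence so is each quadratic subfield $L$; thus $L\otimes\Q_2$ is an unramified étale $\Q_2$-algebra with unit discriminant, and inductivity reduces $a_2(\rho_{K_q})$ to ($f$ times) the conductor exponent of the local component of $\psi_q$ at the prime(s) of $L$ over $2$, where $f\in\{1,2\}$. A character of $2$-power order of the unit group of such a local field is trivial on its odd-order tame quotient, so its conductor exponent is either $0$ or at least $2$; the crux is to show it is exactly $0$ or $2$, i.e. that the wild ramification break of $K_q/M$ at $2$ is $0$ or $1$ and never $\ge 2$. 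This is where the quaternionic nature of $K_q$ enters: the embedding-problem obstruction appearing in Witt's proof of Theorem~\ref{quaternionthm} (the class $\xi$, equivalently the requirement that $\bigl(\tfrac{-a,-b}{\Q}\bigr)\cong\bigl(\tfrac{-1,-1}{\Q}\bigr)$ localized at $2$) constrains $\theta$, hence $q\theta$, to lie in the appropriate local square class at $2$. Multiplying the contributions gives $a_2\in\{0,4\}$, the value depending on $q$ through $q\bmod 8$ and the $2$-adic class of $\theta$. \textbf{The main obstacle is exactly this $2$-adic analysis:} one must exclude the a priori possible larger exponents — in particular the value $6$ that would arise if $K_q/M$ acquired $\sqrt{\pm2}$-type ramification at $2$ (note that a fundamental discriminant $q$ coprime to $ab$ can have $v_2(q)\in\{0,2,3\}$, so the naive twist formula does not by itself rule this out). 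The cleanest route I see is to enumerate the finitely many wildly ramified $Q$-extensions of $\Q_2$ whose biquadratic subfield is unramified, to check directly that each contributes conductor exponent $0$ or $4$ to $\rho$, and then to verify that twisting by $\chi_q$ for the allowed $q$ keeps us within this list.
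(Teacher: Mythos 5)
Your handling of the odd primes is correct and, in substance, the same as the paper's. The paper argues directly on inertia: for odd $p\mid q$ the splitting type is $(1^2)^4$ or $(2^2)^2$, the (tame) inertia is the center, and $\chi(-1)=-2$ gives local exponent $2$; for $p\mid ab$ the inertia is $\{\pm1\}$ or an order-$4$ subgroup, and $\chi(i)=\chi(j)=\chi(k)=0$ again gives exponent $2$. Your version of this (no nonzero inertia invariants in either case, plus the standard fact that twisting an unramified two-dimensional representation by a character $\psi$ produces conductor $\mathrm{cond}(\psi)^2$ at $p\mid q^*$) is the same computation; the detour through inductivity from a quadratic subfield is not needed but harmless, and your remark about primes dividing $\gcd(a,b)$ concerns the wording of the statement rather than the proof.

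The genuine gap is at $p=2$, and you flag it yourself without closing it. The paper's proof at $2$ is very short: since $a,b\equiv1\pmod 4$ the biquadratic field $M=\Q(\sqrt a,\sqrt b)$ is unramified at $2$, so the inertia at $2$ lies in the center; hence either $\alpha=0$, or the inertia is $\{\pm1\}$, necessarily wild, and the paper simply asserts $\alpha=4$. As you correctly identify, $\alpha=4$ is equivalent to the wild break being $1$, i.e.\ to the quadratic character of the completion of $M$ at a place $w\mid 2$ cutting out $K_q/M$ locally having conductor exponent $2$ rather than $3$, equivalently to $v_w(q\theta)$ being even; this is exactly the point your proposal leaves open. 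Moreover, the final step of your plan (``verify that twisting by $\chi_q$ for the allowed $q$ keeps us within this list'') cannot succeed by a purely local enumeration: quadratic and cyclic quartic extensions of $\Q_2$ with unramified residual subextension and conductor exponent $3$ do exist, and if $2$ is unramified in $\Q(\sqrt{\theta})$ while $8\mid q$ (such fundamental discriminants are coprime to $ab$ and are not excluded by the hypotheses), then $a_2(\rho_K\otimes\chi_q)=2\,a_2(\chi_q)=6$; more generally the exponent is $6$ whenever $v_w(q\theta)$ is odd. So your argument as written does not establish the dichotomy $\alpha\in\{0,4\}$; closing it requires controlling the $2$-adic square class of $q\theta$ (a normalization of $\theta$ from \eqref{theta} together with a restriction on $q$ at $2$), which is additional input beyond what you — and, it should be said, beyond what the paper's one-line assertion at $p=2$ — actually supply.
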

\begin{proof}
This is established by Fr\"ohlich~\cite{Fr} and we outline here a somewhat more direct proof. The splitting pattern of an odd prime dividing $q$ is $(1^2)^4$ or $(2^2)^2$. In both cases the ramification is tame and the inertia subgroup is $\{\pm 1\}$. Since $\chi(-1)=-2$ we find that the local Artin conductor is equal to $2$. The prime $2$ can either be unramified in which case $\alpha=0$, or have the same splitting pattern as above in which case the prime $2$ is wildly ramified and $\alpha=4$. The primes dividing $ab$, which all have tame ramification, can in addition have splitting type $(1^4)^2$ in which case the inertia has order four. Since $\chi(i)=\chi(j)=\chi(k)=0$ the Artin conductor is again equal to $2$. 
\end{proof}

The family $\FF$ can be interpreted as a geometric family in the following way. We consider the binary quadratic form $x^2-q\theta y^2$ over $\Q(\sqrt{a},\sqrt{b})$ and construct  the zero locus $x^2-q\theta y^2=0$ inside $\A^1\times\P^1$. Applying restriction and extension of scalars from $\Q(\sqrt{a},\sqrt{b})$ to $\Q$, we obtain a branched covering $X\to \A^1$ of degree eight defined over $\Q$. Our family $\FF$ is connected to the covering $X\to \A^1$ in the same way as the families and coverings are connected in Section \ref{s:setup}.   

The action of $\Gal(\Q)$ on $K_q=\Q(\sqrt{q\theta})$ splits into the disjoint actions on $\sqrt{q}$ and $\sqrt{\theta}$. It follows that the Artin representations $\rho_{K_q}: \Gal(\Q) \to \GL_2(\C)$ differ by quadratic twists, i.e.\ by twisting by the one dimensional representations $\Gal(\Q)\to \{\pm 1\}$ attached to the quadratic Dirichlet characters $\chi_q$.

We order elements in $\FF$ by conductor, and let $\FF(x)$ denote the set of elements in $\FF$ with conductor less than $x$. Let $\cL=\cL(x)$ be defined by
\begin{equation}\label{eqLq}
\cL:=\frac{1}{|\FF(x)|}\sum_{K\in\FF(x)}\log C_K.
\end{equation}
We write $M=\Q(\sqrt{a},\sqrt{b})$. Before stating our theorem on low-lying zeros, we collect the information on splitting behavior of unramified primes in $K$ and $M$ discussed above, along with the corresponding densities in the family $\FF$, in Table \ref{table2}. In what follows we will find it convenient to denote the splitting type of a prime $p$ in a field $K$ by $\sigma_K(p)$.

\begin{table}[ht]
\centering
\begin{tabular}{|c | c| c| }
  \hline
  &&\\[-8pt]
  Splitting type of $p$ in $K$ & Splitting type of $p$ in $M$ & Density in $\FF$\\[12pt]
  \hline
  &&\\[-8pt]
  $(1)^8$& $(1)^4$& $\displaystyle\frac1{8}+O\Bigl(\frac{1}{p}\Bigr)$\\[12pt]
  $(2)^4$& $(1)^4$& $\displaystyle\frac1{8}+O\Bigl(\frac{1}{p}\Bigr)$\\[12pt]
  $(4)^2$& $(2)^2$& $\displaystyle\frac3{4}+O\Bigl(\frac{1}{p}\Bigr)$\\[12pt]
\hline
\end{tabular}
\caption{Densities of splitting types in $\FF$.}\label{table2}
\end{table}

The Sato-Tate group of $\FF$ is $Q\subset\GL_2(\C)$ embedded via the $2$-dimensional representation $\rho$. As before in \S\ref{s:families-degree-n}
and \S\ref{sub:D4}, we identify $\TT$ with $(S^1)^2$ modulo permutation of the coordinates. Then the Sato-Tate measure equals 
\begin{equation*}
\mu_{\mathrm{ST}}(\FF)=\frac18\delta(1,1)+\frac18\delta(-1,-1)+\frac34\delta(i,-i),
\end{equation*}
and the corresponding indicators are $i_1(\FF)=1$, $i_2(\FF)=1$ and $i_3(\FF)=-1$. In particular it follows that the family $\FF$ is homogeneous symplectic.

\begin{theorem}\label{quaternion-1-level}
Let $\FF$ be the one-parameter family of quaternionic fields described above. If $f$ is a Paley-Wiener function whose Fourier transform has support in $[-\alpha,\alpha]$ for $\alpha<\frac{4}{11}$, then  
\begin{align*}
\lim_{x\to\infty}\frac{1}{|\FF(x)|}\displaystyle\sum_{K\in\FF(x)}\displaystyle\sum_j f\bigg(\frac{\gamma^{(j)}_K\cL}{2\pi}\bigg)=\widehat{f}(0)+\frac{f(0)}{2}.
\end{align*}
\end{theorem}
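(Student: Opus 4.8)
The plan is to run the argument proving Theorem~\ref{th:low-lying} with the single structural change that the Frobenius--Schur indicator is now $i_3(\FF) = -1$ instead of $+1$. Since $\rho_{K_q}$ is symplectic, hence self-dual, we may assume $f$ even, and we insert the explicit formula (cf.\ Proposition~\ref{propexplicit}, whose pole term is absent here because every $L(s,\rho_K)$ is entire) into $\frac{1}{|\FF(x)|}\sum_{K\in\FF(x)}\sum_j f(\gamma_K^{(j)}\cL/2\pi)$. The conductor term yields $\widehat f(0)$ exactly as in \eqref{limit1}, using $\cL = (1+o(1))\log x$ since the family is ordered by conductor. What remains is the prime sum, which we decompose as $\CS_1+\CS_2+\CS_3+\CS_\ram$ precisely as in \eqref{eqs}.

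The analytic input is a counting estimate for the fundamental discriminants $q$ parametrizing the fields $K_q$: using Lemma~\ref{l:conductor-quaternion} one has $|\FF(x)| \asymp x^{1/2}$, and for each prime $p$ and each splitting configuration in Table~\ref{table2} an estimate of the shape \eqref{e:thmainsncount} with $\theta = 1/2$. Because $\rho_{K_q} = \rho_K\otimes\chi_q$ is a quadratic twist of a fixed representation, and the splitting type of $p$ in $M = \Q(\sqrt a,\sqrt b)$ is independent of $q$ (as $M \subset K_q$ for all $q$), this reduces to counting fundamental discriminants $q$ prime to $ab$ with $q$ in a prescribed residue class modulo $p$ (or with $p\mid q$) and with prescribed Kronecker symbols at the finitely many primes dividing $2ab$; the uniformity in $p$ comes from P\'olya--Vinogradov, and this is essentially the content of~\cite{Rubinstein}. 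In particular $c_{p,\tau}\to \mu_{\mathrm{ST}}(\FF)(\{\tau\})$ and $c_{p\mid\Delta} = O(1/p)$.

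Granting this, the remaining sums are handled as in the proof of Theorem~\ref{th:low-lying}. Writing $\chi$ for the character of $\rho$, so that $\chi(1) = 2$, $\chi(-1) = -2$ and $\chi$ vanishes on elements of order $4$, Table~\ref{table2} gives for the average unramified trace
\[
t_\FF(p) = \tfrac18\chi(1) + \tfrac18\chi(-1) + \tfrac34\cdot 0 + O\!\left(\tfrac1p\right) = O\!\left(\tfrac1p\right),
\]
so $\CS_1 = o(1)$ in the stated range of $\alpha$; likewise $\CS_3$ and $\CS_\ram$ are $o(1)$ using $|\theta_K(p^k)|\le 2$ and the counting bounds, the primes dividing $2ab$ being only finitely many and hence harmless. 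The one surviving main term is that of $\CS_2$, whose coefficient is
\[
\sum_{\tau}\chi(\tau^2)\cdot(\text{density of }\tau) = \tfrac18\chi(1) + \tfrac18\chi(1) + \tfrac34\chi(-1) = -1 = i_3(\FF),
\]
since an order-$4$ element of $Q$ squares to $-1$; by the prime number theorem and integration by parts (as in \eqref{eqtempeval1}), $\CS_2 \to i_3(\FF)\cdot\frac{f(0)}{2}$. As the prime sum enters the explicit formula with a minus sign, the limit equals $\widehat f(0) - i_3(\FF)\frac{f(0)}{2} = \widehat f(0) + \frac{f(0)}{2}$. Tracking the error terms through this computation with $\theta = 1/2$ yields the admissible support $\alpha < \frac{4}{11}$.

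The main obstacle is this last point: proving the estimate \eqref{e:thmainsncount} for the family with a power-saving error term uniform in $p$, and optimizing the exponents --- including the contribution of primes that ramify through the factor $(q^*)^2$ in the conductor (Lemma~\ref{l:conductor-quaternion}) --- so as to reach the threshold $\alpha < 4/11$.
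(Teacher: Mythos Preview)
Your overall architecture is correct and matches the paper: assume $f$ even, note that the $L$-functions are entire, apply the explicit formula, split the prime sum as $\CS_1+\CS_2+\CS_3+\CS_\ram$, and identify the main term $-\tfrac{f(0)}{2}$ in $\CS_2$ via the Frobenius--Schur indicator $i_3(\FF)=-1$. The paper does essentially the same, with one small simplification for $\CS_2$: since $\theta_K(p^2)$ depends only on the splitting of $p$ in the fixed biquadratic field $M$ (it is $+2$ if $\sigma_M(p)=(1)^4$ and $-2$ if $\sigma_M(p)=(2)^2$), no average over $q$ is needed there at all, and Chebotarev for $M$ already gives the $-1$.

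The genuine gap is in your derivation of the support $\alpha<\tfrac{4}{11}$. You attribute the uniformity in $p$ to P\'olya--Vinogradov, but that bound does not produce $\tfrac{4}{11}$. In the paper, $\CS_1$ is handled by observing that for primes $p$ split in $M$ the sign $\theta_{K_q}(p)=\pm 2$ is governed by the Legendre symbol $\bigl(\tfrac{q\theta}{p}\bigr)$, and the resulting character sum over fundamental discriminants $|q|\ll x^{1/2}$ is bounded using \emph{Burgess's estimate} (in the squarefree form of \cite[Lemma~2.3]{Munsch}), giving an error $O_\epsilon\bigl(x^{1/4}(\log x)\,p^{3/16+\epsilon}\bigr)$. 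The Burgess exponent $\tfrac{3}{16}$ is exactly what yields
\[
\CS_1=O_\epsilon\!\left(\frac{e^{(11/16+\epsilon)\cL\alpha}}{x^{1/4}}\right),
\]
hence the threshold $\alpha<\tfrac{4}{11}$. With P\'olya--Vinogradov combined with the squarefree sieve one gets at best an error of size $x^{1/4}p^{1/4+\epsilon}$, which in your framework (with $\theta=\tfrac12$, $\delta_1=\tfrac12$, $A=\tfrac14$) only yields $\alpha<\tfrac13$. Your appeal to \cite{Rubinstein} is also slightly misplaced: Rubinstein uses a large-sieve inequality of Jutila rather than P\'olya--Vinogradov, and obtains the stronger range $\alpha<\tfrac12$, as the paper itself remarks after the proof. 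So the missing ingredient to reach the stated $\tfrac{4}{11}$ is Burgess, not P\'olya--Vinogradov, and your final paragraph correctly flags that you have not actually carried out this step.
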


\begin{proof}
As before, we can without loss of generality assume that $f$ is even. Recalling \eqref{eqexpused}, \eqref{limit1} and the fact that the $L$-functions $L(s,\rho_K)$ are entire, we write
\begin{equation*}
	\frac{1}{|\FF(x)|}\displaystyle\sum_{K\in\FF(x)}\displaystyle\sum_j f\bigg(\frac{\gamma^{(j)}_K\cL}{2\pi}\bigg)=\widehat{f}(0)+o(1)-(\CS_1+\CS_2+\CS_3+\CS_\ram),
\end{equation*}
where the $\CS_i$ are defined exactly as in \eqref{eqs}.

To evaluate $\CS_1$, note that the primes $p$ that have splitting type $(2)^2$ in $\Q(\sqrt{a},\sqrt{b})$ do not contribute to the sum over primes because $\theta_K(p)=0$ for all such primes. Thus, we may restrict our sum to primes that split in $\Q(\sqrt{a},\sqrt{b})$, yielding
\begin{equation*}
\begin{array}{rcl}
\CS_{1}&=&\displaystyle\frac{2}{\cL|\FF(x)|}\displaystyle\sum_{\substack{p\\ \sigma_M(p)=(1)^4}}
\displaystyle\frac{\log p}{\sqrt{p}}\widehat{f}\Bigl(
\displaystyle\frac{\log p}{\cL}\Bigr)
\displaystyle\sum_{K\in\FF^{p\nmid\Delta}(x)}\theta_K(p)
\\[.25in]&=& \displaystyle\frac{2}{\cL|\FF(x)|}\displaystyle\sum_{\substack{p\\ \sigma_M(p)=(1)^4}} \displaystyle\frac{\log p}{\sqrt{p}}\widehat{f}\Bigl(\displaystyle\frac{\log p}{\cL}\Bigr)
\biggl(
\displaystyle\sum_{\substack{K\in\FF^{p\nmid\Delta}(x)\\ \sigma_K(p)=(1)^8}} 2-\displaystyle\sum_{\substack{K\in\FF^{p\nmid\Delta}(x)\\ \sigma_K(p)=(2)^4}}2
\biggr).
\end{array}
\end{equation*}
Let $p$ be a prime that splits in $\Q(\sqrt{a},\sqrt{b})$. We claim that the splitting type of $p$ in the quaternionic field $K_q$ corresponding to the parameter $q$ depends only on whether or not $q$ is a quadratic residue modulo $p$. Indeed, by assumption, we have $\Q(\sqrt{a},\sqrt{b})\otimes\Q_p=\Q_p^4$. So the splitting type of $p$ in $K_q=\Q(\sqrt{q\theta })$ depends only on whether or not $q\theta$ is a square in $\Q_p$. The claim now follows since $\theta$ is fixed.
Therefore, using Burgess's bound as in \cite[Lemma 2.3]{Munsch}, we have
\begin{equation*}
\begin{array}{rcl}
\#\{K\in\FF^{p\nmid\Delta}(x):\sigma_K(p)=(1)^8\} &=& \displaystyle\frac{p-1}{2p}|\FF^{p\nmid\Delta}(x)|+O_{\epsilon}\big(x^{1/4}(\log x)p^{3/16+\epsilon}\big),\\[.1in]
\#\{K\in\FF^{p\nmid\Delta}(x):\sigma_K(p)=(2)^4\} &=& \displaystyle\frac{p-1}{2p}|\FF^{p\nmid\Delta}(x)|+O_{\epsilon}\big(x^{1/4}(\log x)p^{3/16+\epsilon}\big).
\end{array}
\end{equation*}
Hence
$$
\CS_1=O_{\epsilon}\biggl(\frac{1}{x^{1/4}}\sum_{\log p\leq \cL\alpha}\frac{\log p}{p^{5/16-\epsilon}}\biggr)=O_{\epsilon}\biggl(\frac{e^{(11/16+\epsilon)\cL\alpha}}{x^{1/4}}\biggr).
$$

To estimate $\CS_2$, we note that $\theta_K(p^2)$ is $2$ or $-2$ depending on whether the splitting type of $p$ in $\Q(\sqrt{a},\sqrt{b})$ is $(1)^4$ or $(2)^2$, respectively. Therefore, we have
\begin{equation*}
\begin{array}{rcl}
\CS_{2}&=&\displaystyle\frac{2}{\cL|\FF(x)|}\Bigg(\displaystyle\sum_{\substack{p\\ \sigma_M(p)=(1)^4}}
\displaystyle\frac{\log p}{p}\widehat{f}\Bigl(
\displaystyle\frac{2\log p}{\cL}\Bigr)
\displaystyle\sum_{K\in\FF^{p\nmid\Delta}(x)}2-\displaystyle\sum_{\substack{p\\ \sigma_M(p)=(2)^2}}
\displaystyle\frac{\log p}{p}\widehat{f}\Bigl(
\displaystyle\frac{2\log p}{\cL}\Bigr)
\displaystyle\sum_{K\in\FF^{p\nmid\Delta}(x)}2\Bigg)\\[.3in]
&=&\displaystyle\frac{4}{\cL|\FF(x)|}\Bigg(\displaystyle\sum_{\substack{p\\ \sigma_M(p)=(1)^4}}
\displaystyle\frac{\log p}{p}\widehat{f}\Bigl(\displaystyle\frac{2\log p}{\cL}\Bigr)
|\FF(x)|\bigl(1+O(p^{-1})\bigr)
-\displaystyle\sum_{\substack{p\\ \sigma_M(p)=(2)^2}}
\displaystyle\frac{\log p}{p}\widehat{f}\Bigl(\displaystyle\frac{2\log p}{\cL}\Bigr)
|\FF(x)|\bigl(1+O(p^{-1})\bigr)
\Bigg)\\[.3in]
&=&\displaystyle\frac{4}{\cL}\Bigg(
\displaystyle\sum_{\substack{p\\ \sigma_M(p)=(1)^4}}
\displaystyle\frac{\log p}{p}\widehat{f}\Bigl(\displaystyle\frac{2\log p}{\cL}\Bigr)
-\displaystyle\sum_{\substack{p\\ \sigma_M(p)=(2)^2}}
\displaystyle\frac{\log p}{p}\widehat{f}\Bigl(\displaystyle\frac{2\log p}{\cL}\Bigr)
\Bigg)+o(1).
\end{array}
\end{equation*}
Hence, recalling that the Frobenius-Schur indicator of $\rho$ equals
$-1$ along with the relation \eqref{eqtempeval1}, we obtain
\begin{equation*}
\CS_{2}=-\frac{f(0)}{2}+o(1).
\end{equation*}

Finally, bounding the quantities $\CS_3$ and $\CS_\ram$ in exactly the same way as in \eqref{eqs23ram}, we find that
\begin{equation*}
\begin{array}{rcl}
\CS_3&=& o(1),\\[.1in]
\CS_\ram&=&o(1).
\end{array}
\end{equation*}
We conclude that
\begin{equation*}
\frac{1}{|\FF(x)|}\displaystyle\sum_{K\in\FF(x)}\displaystyle\sum_j f\bigg(\frac{\gamma^{(j)}_K\cL}{2\pi}\bigg)=\widehat{f}(0)+\frac{f(0)}{2}+O_{\epsilon}\biggl(\frac{e^{(11/16+\epsilon)\cL\alpha}}{x^{1/4}}\biggr)+o(1),
\end{equation*}
from which the desired result readily follows.
\end{proof}

Let us remark that the condition $\alpha<\frac4{11}$ on the support in Theorem \ref{quaternion-1-level} can be relaxed. In fact, it follows from a result of Rubinstein \cite{Rubinstein} on more general families of quadratic twists that Theorem \ref{quaternion-1-level} holds with any $\alpha<\frac12$. Rubinstein uses a large sieve type inequality due to Jutila instead of Burgess's bound to obtain this result. Furthermore, Katz and Sarnak \cite[Appendix 1 (unpublished)]{KatzSarnak2} proved, assuming GRH, that the above result holds also in the doubled region $\alpha<1$. For any integer $n\geq 1$, \cite{Rubinstein} further establishes the $n$-level density for test functions with support restricted to the region $\sum_{i=1}^n|x_i|<\frac1m$ of low-lying zeros of families consisting of quadratic twists of a fixed automorphic cuspidal representation of $\GL_m(\Q)$. It would be interesting to see if, similarly as done in \cite{ERGR} in the case $m=1$, one could double the support (conditional on GRH) in the $n$-level result for the family $\FF$ considered in this section. 

\begin{proposition}\label{p:root}
The root number of the Artin representation attached to $\Q(\sqrt{q\theta})$ is independent of $q$.
\end{proposition}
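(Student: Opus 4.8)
The plan is to leverage the relation $\rho_{K_q}\cong\rho_{K_1}\otimes\chi_q$ noted above (the representations $\rho_{K_q}$ are quadratic twists of one fixed $\rho_{K_1}$ by the characters $\chi_q$), so that it suffices to prove that the ratio $W(\rho_{K_q})/W(\rho_{K_1})$ of global Artin root numbers equals $1$. Writing $W(\rho)=\prod_v W_v(\rho_v)$ over all places of $\Q$, I would compare $W_v(\rho_{K_1,v}\otimes\chi_{q,v})$ with $W_v(\rho_{K_1,v})$ one place at a time. The three structural facts to use are: $\det\rho_{K_1}=1$, since the two-dimensional representation of $Q$ factors through $\SL_2$; Lemma~\ref{l:conductor-quaternion}, which shows that the conductor of $\rho_{K_1}$ is supported at $2$ and at the primes dividing $ab$ and has \emph{even} exponent at every finite place; and the hypotheses that $q$ is a fundamental discriminant coprime to $ab$ and that $a,b\equiv 1\pmod 4$, so that $\chi_q$ has conductor $|q|$ prime to $ab$.

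For $p\nmid q$ the local character $\chi_{q,p}$ is unramified, and since the Artin conductor exponent $a(\rho_{K_1,p})$ is even, the standard behaviour of local root numbers under unramified twists gives $W_p(\rho_{K_1,p}\otimes\chi_{q,p})=\chi_{q,p}(\varpi_p)^{a(\rho_{K_1,p})}W_p(\rho_{K_1,p})=W_p(\rho_{K_1,p})$. For $p\mid q$ I claim the local ratio is $\chi_{q,p}(-1)$: when $\rho_{K_1,p}$ is unramified (so $\rho_{K_1,p}=\xi\oplus\xi^{-1}$ with $\xi$ unramified, using $\det\rho_{K_1,p}=1$), the unramified pieces drop out and one is left with $W_p(\chi_{q,p})^2=\chi_{q,p}(-1)$, using that a quadratic character $\mu$ satisfies $W_p(\mu)^2=\mu(-1)$ while $W_p(\rho_{K_1,p})=1$; the same conclusion holds in the one remaining case $p=2$ with $2\mid q$ and $2$ wildly ramified in $K_1$, where the ramification analysis around Lemma~\ref{l:conductor-quaternion} shows that inertia at $2$ acts through the centre $\{\pm 1\}\subset Q$, whence $\rho_{K_1,2}=\xi\eta\oplus\xi^{-1}\eta$ with $\xi$ unramified and $\eta$ the quadratic character of $\Q_2^\times$ of conductor $4$, and then $W_2(\rho_{K_q,2})/W_2(\rho_{K_1,2})=(\eta\chi_{q,2})(-1)/\eta(-1)=\chi_{q,2}(-1)$. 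At the archimedean place $\rho_{K_1,\infty}$ is $\mathbf{1}^{\oplus 2}$ or $\mathrm{sgn}^{\oplus 2}$ according as $K_1$ is totally real or totally imaginary, and since $W_\infty(\mathrm{sgn})^2=-1$ while $W_\infty(\mathbf{1})=1$, twisting by $\chi_{q,\infty}$ multiplies the archimedean root number by $\mathrm{sgn}(q)$. Multiplying all of this together yields $W(\rho_{K_q})/W(\rho_{K_1})=\mathrm{sgn}(q)\prod_{p\mid q}\chi_{q,p}(-1)$; evaluating the trivial-on-$\Q^\times$ idele class character $\chi_q$ at $-1$ gives $\prod_{p\mid q}\chi_{q,p}(-1)=\chi_{q,\infty}(-1)^{-1}=\mathrm{sgn}(q)$, so the ratio is $\mathrm{sgn}(q)^2=1$, as desired.

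The main obstacle is the $2$-adic book-keeping in the case where $2$ is wildly ramified in $K_1$ and $2\mid q$, since there the clean ``unramified twist'' and ``unramified times ramified'' formulas do not apply and one must instead use the explicit description of $\rho_{K_1,2}$ as a twist of a quadratic character of $\Q_2$ together with the correct normalization of local $\varepsilon$-factors; everything else is formal. Conceptually, the mechanism behind the proposition is transparent: the sign-of-$q$ ambiguity forced at the archimedean place by the change between totally real and totally imaginary $K_q$ is cancelled exactly by the ramified primes dividing $q$, which is precisely the content of the product formula for $\chi_q$, equivalently the fact that a quadratic Dirichlet $L$-function always has root number $+1$. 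This is also consistent with Rubinstein's study of quadratic twist families: what is special here is that $\rho_{K}$ is dihedral of trivial determinant, so the twist contributions at the primes dividing $q$ collapse to signs that telescope via reciprocity, whereas for a generic cuspidal $\pi$ they vary.
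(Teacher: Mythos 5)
Your proof is correct and follows essentially the same route as the paper: both decompose the global root number into Tate's local epsilon factors for the standard additive character, use the evenness of the conductor exponent of $\rho_{K_1}$ (Lemma~\ref{l:conductor-quaternion}) at places where $\chi_{q}$ is unramified, use $\det\rho_{K_1}=1$ (via the dihedral shape $\xi\oplus\xi^{-1}$, resp.\ $\xi\eta\oplus\xi^{-1}\eta$) at places dividing $q$, and conclude from the global triviality of the quadratic character's contribution --- your product formula $\prod_v\chi_{q,v}(-1)=1$ being the squared form of the paper's $\epsilon(\chi_q)=1$. The only difference is cosmetic: you make the wildly ramified place $p=2$ with $2\mid q$ explicit, where the paper's one-line appeal to $\det\rho=1$ is terser.
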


\begin{proof}
See Fr\"ohlich \cite[Assertion XV]{Fr}. We give here a proof based on the properties of local epsilon factors in~\cite{Tate}. 
Let $\rho$ be the Artin representation attached to $\Q(\sqrt{\theta})$ and $\chi_q$ be the quadratic Dirichlet character attached to $q$. We want to prove that $\epsilon(\rho\otimes \chi_q)=\epsilon(\rho)$.

Fix the standard additive character $\psi$ of $\Q\backslash \mathbb{A}$. Then the epsilon factor splits as a product of local epsilon factors $\epsilon_p(\rho\otimes \chi_q,\psi)$. We shall verify that for every $p$, 
\[
\epsilon_p(\rho\otimes \chi_q,\psi) = \epsilon_p(\rho,\psi) \epsilon_p(\chi_q,\psi)^2.
\]
Since globally $\epsilon(\chi_q)=1$ this will finish the proof.

For $p=\infty$ this can be verified directly. For $p\nmid 2abq$ each epsilon factor is equal to one.  For $p\mid ab$, we have $p\nmid q$ by assumption.  Thus $\epsilon_p(\rho\otimes \chi_q,\psi) = \epsilon_p(\rho,\psi) \chi_q(p^{v_p(r(ab)^2)}) = \epsilon_p(\rho,\psi)$, where we have used (Lemma~\ref{l:conductor-quaternion}) that the conductor of $\rho$ is equal to $2^{\alpha}r(ab)^2$ which is a perfect square.  For $p\mid 2q$, we have $p \nmid ab$ by assumption. Since $\det \rho$ is trivial because $\rho$ has image in $\SL_2(\C)$, we find that  $\epsilon_p(\rho\otimes \chi_q,\psi)= \epsilon_p(\chi_q,\psi)^2$. This concludes the claim.
\end{proof}

\AtEndDocument{\bigskip{\footnotesize%
  \textit{E-mail address}, \texttt{arul.shnkr@gmail.com}\par
  \textsc{Department of Mathematics, Harvard University, Cambridge, MA 02138, USA}\par
  \addvspace{\medskipamount}
  \textit{E-mail address}, \texttt{c.a.sodergren@gmail.com}\par
  \textsc{Department of Mathematical Sciences, University of Copenhagen, Universitetsparken 5, \\
  \rule[0ex]{0ex}{0ex}\hspace{36pt}2100 Copenhagen, Denmark}\par
  \addvspace{\medskipamount}
  \textit{E-mail address}, \texttt{templier@math.cornell.edu}\par
  \textsc{Department of Mathematics, Cornell University, Ithaca, NY 14853, USA}
}}

\end{document}